\date{today}
\newtheorem{remark}{Remark}[section]
\newtheorem{theorem}{Theorem}[section]
\newtheorem{proposition}{Proposition}[section]
\newtheorem{lemma}{Lemma}[section]
\newtheorem{corollary}{Corollary}[section]
\newcommand{\beq}{\begin{equation}}
	\newcommand{\eeq}{\end{equation}}
\newcommand{\ben}{\begin{eqnarray}}
	\newcommand{\een}{\end{eqnarray}}
\newcommand{\beno}{\begin{eqnarray*}}
	\newcommand{\eeno}{\end{eqnarray*}}
\numberwithin{equation}{section}
\begin{document}
	\title[Blow-up suppression via the 2-D Taylor-Couette flow]{Blow-up suppression of the
		Patlak-Keller-Segel-Navier-Stokes system via Taylor-Couette flow}
	\author{Shikun~Cui}
	\address[Shikun~Cui]{School of Mathematical Sciences, Dalian University of Technology, Dalian, 116024,  China}
	\email{cskmath@163.com}
	\author{Lili~Wang}
	\address[Lili~Wang]{School of Mathematical Sciences, Dalian University of Technology, Dalian, 116024,  China}
	\email{wllmath@163.com}
	\author{Wendong~Wang}
	\address[Wendong~Wang]{School of Mathematical Sciences, Dalian University of Technology, Dalian, 116024,  China}
	\email{wendong@dlut.edu.cn}
	\date{\today}
	\maketitle

	\begin{abstract} 
		Motivated by the use of Taylor-Couette flow 
		in extracorporeal circulation devices [K$\ddot{\rm o}$rfer et al., 2003, 26(4): 331-338], where it leads to an
		accumulation of platelets and plasma proteins in the vortex center and therefore to a decreased probability of contact between platelets and material surfaces  and its  protein adsorption per square unit is significantly lower than laminar flow.
Increased platelet adhesion or protein adsorption on the device surface can induce platelet aggregation or thrombosis, which  is analogous to the ``blow-up phenomenon" in mathematical modeling. Here
we mathematically analyze this stability mechanism and demonstrate that sufficiently strong flow can prevent blow-up from occurring.
 In details, we investigate the two-dimensional Patlak-Keller-Segel-Navier-Stokes system in an annular domain around a Taylor-Couette flow 
		$U(r,\theta)=A\big(r+\frac{1}{r} \big)(-\sin\theta, \cos\theta)^{T}$ with $(r,\theta)\in[1,R]\times\mathbb{S}^{1}$,
		and prove that 
		the solutions are globally bounded without any smallness restriction on the initial cell mass or velocity  when $A$ is large.
	\end{abstract}
	
	{\small {\bf Keywords:} Patlak-Keller-Segel-Navier-Stokes system;
		Taylor-Couette flow; enhanced dissipation; blow-up suppression}
	
	\section{Introduction}
	Guillermo et al. in \cite{GG1999} observed experimentally that Taylor-Couette flow employed in the Vortex Flow Plasmapheretic Reactor (VFPR) demonstrates multiple functional benefits that enhance both performance and safety in extracorporeal heparin management. It also minimizes blood cell damage by effectively separating cellular components from immobilized enzyme beads. Most importantly, it enables safe regional heparinization by efficiently removing heparin in the extracorporeal circuit, maintaining a therapeutic anticoagulant level externally while reducing systemic exposure in the patient. In addition, K\"{o}rfer et al. in \cite{K2003} also found that Taylor-Couette flow  in extracorporeal circulation devices can lead to an
	accumulation of platelets and plasma proteins in the vortex center and therefore to a decreased
	probability of contact between platelets and material surfaces. Especially, at shear rates greater than or equal to $
	550 s^{-1}$, laminar flow resulted in a significantly higher platelet drop and PF4 release than Taylor vortex flow.
	Also protein adsorption per square unit was significantly higher for laminar flow.
	
	As a classic fluid dynamic phenomenon, Taylor-Couette flow describes the steady-state motion of a viscous fluid confined between two coaxial rotating cylinders, first systematically studied by Taylor in the 1920s \cite{Taylor1923}. Despite its conceptually simple geometry, the stability and perturbation of this flow have long presented challenging research questions, leading to extensive experimental, theoretical, and numerical investigations \cite{CI1994,F2018,K1967,P2000}. It remains an active field in fluid mechanics, with many aspects still not fully understood. At the biological level, beyond its use in heparin management, Taylor-Couette flow has proven relevant in several key biomedical applications, including enhancing red blood cell oxygenation \cite{MH1985}, improving plasma filtration efficiency \cite{BJ1989}, and facilitating enzymatic heparin neutralization \cite{AHSL}.

	Inspired by the above important applications of Taylor-Couette flow, 
	consider the following two-dimensional  Patlak-Keller-Segel (PKS) system coupled with the
	Navier-Stokes (NS) equations in a two-dimensional annular region:
	\begin{equation}\label{ini}
		\left\{
		\begin{array}{lr}
			\partial_tn+v\cdot\nabla n=\Delta n-\nabla\cdot(n\nabla c), \\
			\Delta c+n-c=0, \\
			\partial_tv+ v\cdot\nabla v+\nabla P=\Delta v+n\nabla\Phi,\quad\nabla\cdot v=0, \\
			(n,v)\big|_{t=0}=(n_{\rm in},v_{\rm in}),
		\end{array}
		\right.
	\end{equation}
	where $(x,y)\in\mathcal{D}$ and $\mathcal{D}\subset\mathbb{R}^2$ is an annular region.
	Here, $n$ is the cell density, $c$ denotes the concentration of chemoattractant, and $v$ denotes the velocity of fluid. In addition, $P$ is the pressure and $\Phi$ represents the given potential function. Assume that $\Phi=\sqrt{x^2+y^2}$ for simplicity.
	
	When the fluid velocity and the coupling are absent (i.e., $v=0$ and $\Phi=0$), the system \eqref{ini} reduces to the classical  Patlak-Keller-Segel model, which was originally introduced by Patlak \cite{P1953} and further developed by Keller and Segel \cite{KS1970}. The Patlak-Keller-Segel system is commonly used to describe the chemotaxis of microorganisms or cells in response to chemical signals. This fundamental process underlies critical biological behaviors such as nutrient foraging, signal relay, and avoidance of detrimental environments \cite{HT1, HP1}. 
	Up to now, there are many developments for the PKS system on blow-up or the critical mass threshold, and we review some progress briefly.
	In the one-dimensional space, all solutions to the PKS system are globally well-posed \cite{OY2001}. 
	In two-dimensional space, the PKS system, in both its parabolic-elliptic and parabolic-parabolic forms, exhibits a $8\pi$ critical mass.
	Define the initial mass $M:=\|n_{\rm in}\|_{L^{1}}$, and
	if $M< 8\pi$, the solutions of the PKS system are globally well-posed. 
	For the parabolic-elliptic case,
	Wei \cite{W2018} proved that the solution is globally well-posed if and only if $M\leq 8\pi$ (see also \cite{BCM2008}). While the cell mass  $M>8\pi$, the solutions of the PKS system will blow up in finite time, and we refer to  Collot-Ghoul-Masmoudi-Nguyen \cite{CGMN2022}, and  Schweyer \cite{Schweyer1} and the references therein.

	It is a more realistic scenario that chemotactic processes take place in a moving fluid.
	As said in  \cite{KX2015}:
	``{\it A natural question is whether the presence of fluid flow
can affect singularity formation by mixing the bacteria thus making concentration
harder to achieve.}" 
	Kiselev-Xu \cite{KX2015} demonstrated this for stationary relaxation
	enhancing flows and time-dependent Yao-Zlatos near-optimal mixing flows in $\mathbb{T}^d~(d=2,3)$; Bedrossian-He \cite{Bedro2} for non-degenerate shear flows in $\mathbb{T}^2$; and He \cite{he0} for monotone shear flows in $\mathbb{T}\times\mathbb{R}$. For the fully coupled Patlak-Keller-Segel-Navier-Stokes (PKS-NS) system, global regularity for strong Couette flow was proven by Zeng-Zhang-Zi \cite{zeng} in $\mathbb{T}\times\mathbb{R}$. Furthermore, Li-Xiang-Xu \cite{Li0} utilized Poiseuille flow, while Cui-Wang \cite{cui1} considered Navier-slip boundary conditions in $\mathbb{T}\times \mathbb{I}$. Recently, Chen-Wang-Yang investigated the suppression of blow-up in solutions to the Patlak-Keller-Segel (-Navier-Stokes) system by a large Couette flow and  established a precise relationship between the amplitude of the Couette flow and the initial data \cite{CWY2025}.
	More references on higher dimensional cases or other  methods to suppress blow-up, we refer to \cite{TW2016, Hu0, Hu1, he24-1,he24-2,CWW1,CWWT3,CWWTIT} and the references therein.
	
 In the plane coordinate, to deal with the pressure $P$, it is common to introduce the vorticity $\omega$  and the stream function $\phi$ satisfying
	$\omega=\partial_{x}v_{2}-\partial_{y}v_{1}$ and $v=(-\partial_{y}\phi, \partial_{x}\phi)^T$.
	When considering the radial vorticity $\omega(x,y)=\omega(r)$ and stream function $\phi(x,y)=\phi(r)$ with $r=\sqrt{x^{2}+y^{2}}$, the vorticity and the velocity field are reduced to
	\begin{equation}\label{radial}
		\left\{
		\begin{array}{lr}
			\omega(x,y)=\omega(r)=\Delta\phi=\phi''(r)+\frac{1}{r}\phi'(r), \\
			v(x,y)=\left(
			\begin{array}{c}
				-\partial_{y}\phi\\
				\partial_{x}\phi\\
			\end{array}
			\right)=\left(
			\begin{array}{c}
				-\sin\theta\\
				\cos\theta\\
			\end{array}
			\right)\phi'(r).
		\end{array}
		\right.
	\end{equation}
	When vorticity $\omega=const$, 
	the stream function $\phi$ defined by $(\ref{radial})_{1}$ indicates
	\begin{equation}\label{eq:phi}
		\phi''(r)+\frac{1}{r}\phi'(r)=const.
	\end{equation}
	In the polar coordinate, the functions $v(x,y)$ and $\omega(x,y)$ are denoted as $U(r, \theta)$ and $\Omega(r)$, respectively. Solving (\ref{eq:phi})  yields their expressions
	\begin{equation}\label{TC flow}
		U(r,\theta)=
		\left(
		\begin{array}{c}
			U_{1}\\
			U_{2}\\
		\end{array}
		\right)=\left(
		\begin{array}{c}
			-\sin\theta\\
			\cos\theta\\
		\end{array}
		\right)\left(Ar+\frac{B}{r}\right),\quad \Omega(r)=2A,
	\end{equation}
	where $A, B$ are constants and spatial variables $(r,\theta)$ belong to a domain $\mathcal{D}=[1,R]\times\mathbb{S}^{1}$. 
	The velocity field $U(r,\theta)$ given in (\ref{TC flow}) is called as Taylor-Couette (TC) flow,
	which is a steady-state solution of 2D incompressible NS equations. In the meanwhile, $\{n,c,v\}=\{0,0,U(r,\theta)\}$  is also a steady-state solution of the PKS-NS system (\ref{ini}).

	Next, we focus on 
	the  blow-up suppression for the PKS-NS system via Taylor-Couette flow in an annulus. 
	Introduce a perturbation around the two-dimensional TC flow $U(r,\theta)$ from (\ref{TC flow}) for the case $A=B$. Setting  $w=\omega-\Omega, u=v-U$, with $\varphi$ being the stream function satisfying $\Delta\varphi=w$ and $u=(-\partial_{y}\varphi, \partial_{x}\varphi)$. After the time rescaling $t\mapsto\frac{t}{A}$, we rewrite the system (\ref{ini}) in polar coordinates:
	\begin{equation}\label{ini1}
		\left\{
		\begin{array}{lr}
			\partial_{t}n-\frac{1}{A}(\partial_{r}^{2}+\frac{1}{r}\partial_{r}+\frac{1}{r^{2}}\partial_{\theta}^{2})n+(1+\frac{1}{r^{2}})\partial_{\theta}n+\frac{1}{Ar}(\partial_{r}\varphi\partial_{\theta}n-\partial_{\theta}\varphi\partial_{r}n)\\\qquad\qquad\qquad\qquad\qquad\qquad
			=-\frac{1}{Ar}\partial_{r}(rn\partial_{r}c)-\frac{1}{Ar^{2}}\partial_{\theta}(n\partial_{\theta}c), \\
			(\partial_{r}^{2}+\frac{1}{r}\partial_{r}+\frac{1}{r^{2}}\partial_{\theta}^{2})c+n-c=0,\\
			\partial_{t}w-\frac{1}{A}(\partial_{r}^{2}+\frac{1}{r}\partial_{r}+\frac{1}{r^{2}}\partial_{\theta}^{2})w+(1+\frac{1}{r^{2}})\partial_{\theta}w+\frac{1}{Ar}(\partial_{r}\varphi\partial_{\theta}w-\partial_{\theta}\varphi\partial_{r}w)=-\frac{1}{Ar}\partial_{\theta}n, \\
			(\partial_{r}^{2}+\frac{1}{r}\partial_{r}+\frac{1}{r^{2}}\partial_{\theta}^{2})\varphi=w,
		\end{array}
		\right.
	\end{equation}
	together with the Dirichlet boundary conditions
	\begin{equation}\label{boundary condition}
		\begin{aligned}
			n|_{r=1,R}=0,\quad c|_{r=1,R}=0,\quad w|_{r=1,R}=0,\quad \varphi|_{r=1,R}=0
		\end{aligned}
	\end{equation}
	with $(r,\theta)\in\mathcal{D}=[1,R]\times\mathbb{S}^{1}$ and $t\geq 0$.
	
	Our main result is stated as follows.
	\begin{theorem}\label{thm:main}
		Assume that the initial data $0\leq n_{\rm in}\in L^{\infty}\cap H^{1}(\mathcal{D})$ and $u_{\rm in}\in H^{2}(\mathcal{D})$. There exists a positive $A_{1}$
		depending on $\|n_{\rm in}\|_{L^{\infty}\cap H^{1}(\mathcal{D})}$ and $\|u_{\rm in}\|_{H^{2}(\mathcal{D})}$, such
		that if $A\geq A_{1}$, then the solutions of \eqref{ini1}-\eqref{boundary condition} are globally bounded and satisfy the follow stability estimates:
		\begin{itemize}
			\item[(i)]  Uniform bounded-ness estimates: 
			\begin{equation*}
				\begin{aligned}
					&\|u\|_{L^{\infty}L^{\infty}}\leq C(\|n_{\rm in}\|_{H^{1}(\mathcal{D})}, \|u_{\rm in}\|_{H^{2}(\mathcal{D})},R),\\
					&\|n\|_{L^{\infty}L^{\infty}}\leq C(\|n_{\rm in}\|_{L^{\infty}\cap H^{1}(\mathcal{D})}, \|u_{\rm in}\|_{H^{2}(\mathcal{D})},R).
				\end{aligned}
			\end{equation*}
			\item[(ii)] Enhanced dissipation estimates:
			\begin{equation*}
				\begin{aligned}
					&\Big\|{\rm e}^{aA^{-\frac13}|\partial_\theta|^{\frac23}R^{-2}t} \Big(n-\frac{1}{2\pi}\int_{0}^{2\pi}n d\theta\Big)
					\Big\|_{L^{2}}\leq C(R)\|n_{\rm in}\|_{H^{1}(\mathcal{D})},\\
					&\Big\|{\rm e}^{aA^{-\frac13}|\partial_\theta|^{\frac23}R^{-2}t} \Big(w-\frac{1}{2\pi}\int_{0}^{2\pi}w d\theta\Big)
					\Big\|_{L^{2}}\leq C(R)\|u_{\rm in}\|_{H^{2}(\mathcal{D})}.
				\end{aligned}
			\end{equation*}
		\end{itemize}
	\end{theorem}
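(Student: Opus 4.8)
The plan is to run a continuity/bootstrap argument organized around the splitting of each unknown into its angular average and fluctuation: write $n=\bar n+n_{\neq}$ with $\bar n(t,r)=\frac1{2\pi}\int_0^{2\pi}n\,d\theta$, and likewise $w=\bar w+w_{\neq}$, $c=\bar c+c_{\neq}$. I would first establish local well-posedness of \eqref{ini1}-\eqref{boundary condition} with a continuation criterion controlled by $\|n\|_{L^\infty}$, and record two $A$-independent structural facts: the maximum principle keeps $n\geq0$ (the $n$-equation is a drift-diffusion equation vanishing on the boundary), and integrating it shows the mass $\|n\|_{L^1}$ is non-increasing because $\partial_\nu n\le 0$ where $n$ attains its boundary minimum $0$. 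These furnish the baseline quantities that the bootstrap must improve.

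The analytic heart is the enhanced dissipation produced by the transport $(1+\frac1{r^2})\partial_\theta$. Its shear profile $U(r)=1+\frac1{r^2}$ obeys $U'(r)=-\frac2{r^3}\neq0$ on $[1,R]$, hence is strictly monotone --- precisely the non-degeneracy that yields the optimal rate. For each nonzero frequency $k$ I would analyze $\mathcal L_k=-\frac1A(\partial_r^2+\frac1r\partial_r-\frac{k^2}{r^2})+ik(1+\frac1{r^2})$ on $[1,R]$ with Dirichlet data and prove the uniform resolvent bound $\|(\mathcal L_k-i\lambda)^{-1}\|\lesssim A^{1/3}|k|^{-2/3}$ for all $\lambda\in\mathbb{R}$, via a hypocoercivity estimate that plays the real part (the $r$-derivative energy) against the imaginary part (the spectral distance $\|(U-\lambda)f\|_{L^2}$), balanced at the boundary-layer scale intrinsic to $U$. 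The Gearhart--Pr\"uss theorem then converts this into the semigroup decay $\|e^{-t\mathcal L_k}\|\lesssim e^{-cA^{-1/3}|k|^{2/3}R^{-2}t}$, which is exactly the weight appearing in conclusion (ii).

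Armed with this decay I would treat the two mode classes asymmetrically. The fluctuation $n_{\neq}$ inherits the enhanced dissipation and is forced only by nonlinear terms, each carrying a favourable $\frac1A$; since the decay rate $A^{-1/3}$ dominates the nonlinear strength $A^{-1}$ for large $A$, its $L^2$ and higher norms should decay on the fast time scale while staying small enough to reabsorb. The average $\bar n$ feels no enhancement, but because $r\in[1,R]$ stays bounded away from the origin the averaged system is effectively a one-dimensional Keller--Segel problem on $[1,R]$ (the coefficients $\frac1r,\frac1{r^2}$ are bounded), for which no critical mass exists and global bounds follow from one-dimensional theory as recalled in the introduction. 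The chemical is recovered from $\Delta c-c=-n$ by elliptic regularity, gaining two derivatives, and the Navier--Stokes part is handled in parallel: $w$ solves the same transport-diffusion equation with source $-\frac1{Ar}\partial_\theta n$ (which is purely fluctuation and small), so $w_{\neq}$ enjoys the identical enhanced dissipation and $u$ is reconstructed from $\Delta\varphi=w$ with Dirichlet data.

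The main obstacle, as in every suppression result for Keller--Segel, is to close the nonlinear bootstrap for the uniform $L^\infty$ bound on $n$ with no smallness on the mass. The aggregation term $\nabla\cdot(n\nabla c)$ is supercritical in two dimensions, so one must show quantitatively that the zero-to-nonzero and nonzero-to-zero interactions, together with the self-interaction of the fluctuations, are all dominated by the product of the enhanced-dissipation decay and the $\frac1A$ smallness, so that taking $A\ge A_1$ (with $A_1$ depending on the initial norms and $R$) strictly improves every bootstrap quantity. The delicate point is upgrading $L^2$ decay to the stated $L^\infty$ estimate: I expect this to require $L^p$-$L^q$ smoothing estimates for the sheared semigroup (or a Moser-type iteration) that retain the enhanced-dissipation gain, carried out simultaneously for $n$ and $w$ so that the feedback through the velocity $u$ is kept under control throughout.
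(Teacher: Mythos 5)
Your proposal follows essentially the same architecture as the paper: a bootstrap on two quantities (an enhanced-dissipation norm of the nonzero modes of $n$ and $w$, plus $\|n\|_{L^{\infty}}$), the zero-mode/fluctuation splitting, the observation that the angular advection speed $1+r^{-2}$ is strictly monotone on $[1,R]$ so the nonzero modes decay at rate $A^{-\frac13}|k|^{\frac23}R^{-2}$, a Nash-type argument exploiting the conserved mass to show the averaged equation behaves like subcritical one-dimensional Keller--Segel, and a Moser--Alikakos iteration for the $L^{\infty}$ bound. The one genuine methodological difference is the source of the linear input: you propose to prove resolvent bounds and invoke Gearhart--Pr\"uss, whereas the paper quotes a ready-made space-time estimate (Proposition \ref{prop:space time}, from An--He--Li) for the weighted, modulated operator $\mathcal{L}_{k}=-\frac1A\big(\partial_{r}^{2}-\frac{k^{2}-\frac14}{r^{2}}\big)+\frac{ik}{r^{2}}$, in which the forcing is split into a zeroth-order part $h_{1}$ and a divergence-form part $r^{\frac12}\partial_{r}(r^{\frac12}h_{2})$, absorbed at costs $A^{\frac16}|k|^{-\frac13}$ and $A^{\frac12}$ respectively. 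That forcing structure is not cosmetic: the aggregation term enters the $n_{k}$-equation as a full $\partial_{r}$ of a product, and a bare semigroup decay bound fed through Duhamel would lose a derivative there; you would need to upgrade your Gearhart--Pr\"uss output to the full space-time estimate so that the divergence-form nonlinearity costs only $A^{\frac12}\cdot A^{-1}=A^{-\frac12}$. Two smaller caveats your sketch leaves implicit: the averaged equation is not a closed 1D Keller--Segel system --- it is forced by quadratic interactions of nonzero modes, which the paper controls by a comparison/contradiction argument using the bootstrap hypotheses and the $A^{-\frac23}$ smallness of those interactions (Lemma \ref{lem:n0 w0}) --- and the frequency-interaction sums in the nonlinear estimates require a case analysis on $|l|$ versus $|k-l|$ to distribute the $|k|^{\frac23}$ weight, which is where the paper's four-case argument for $T_{11}$ and $T_{14}$ lives. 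With those points supplied, your route closes and yields the same conclusions.
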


	\begin{remark}
The Taylor-Couette flow has been successfully implemented in biomedical devices such as the  VFPR, where its unique vortex structure significantly enhances hemocompatibility. It reduces platelet activation and protein adsorption by promoting the accumulation of cellular components in the vortex center, thereby lowering the risk of thrombogenesis and improving the safety of extracorporeal circulation systems \cite{K2003, GG1999}. 
Increased platelet adhesion or protein adsorption on the device surface can induce platelet aggregation or thrombosis, which  poses a huge threat to human life. The above theorem shows  that sufficiently strong flow can prevent the aggregation or blow-up from occurring. As shown in \cite{K2003} at shear rates $ G\geq
	550 s^{-1}$, laminar flow resulted in a significantly higher platelet drop and PF4 release than Taylor vortex flow. Here $G$ is similar as $A$. In fact, let $w_1$ and $w_2$ denote the angular velocity of the
inner or outer cylinder, where  $w_1r_1=v_\theta|_{r_1}=A(r+\frac1r)|_{r_1}$ and $w_2r_2=v_\theta|_{r_2}=A(r+\frac1r)|_{r_2}$. Then
\beno
G=\frac{2(r_1^2w_1+r_2^2w_2)}{r_2^2-r_1^2},\eeno
which implies $G=2A\frac{3+R^2}{R^2-1}$ when $r_1=1, r_2=R.$ It is interesting to estimate the value of $A$ in mathematics, which will be investigated in our future work.
	\end{remark}
	
	\begin{remark}
		Compared with previous results on planar flows (e.g., Couette or Poiseuille flows) usually set in Cartesian coordinates, to our best knowledge, the above result gives the first rigorous proof of global regularity for the Patlak-Keller-Segel-Navier-Stokes system driven by a non-planar shear flow, specifically the Taylor-Couette flow in an annular domain. One of main difficulties lies in the $T_1$ term of the estimates of $n$:
		\begin{equation*}
			\begin{aligned}
				\|n\|_{Y_{a}}
				&\leq C\Big( \sum_{k\neq 0, k\in\mathbb{Z}}\|n_{k}(0)\|_{L^{2}}+\sum_{k\neq 0, k\in\mathbb{Z}}A^{\frac16}|k|^{-\frac13}\|{\rm e}^{aA^{-\frac13}|k|^{\frac23}R^{-2}t}kf_{1}\|_{L^{2}L^{2}}+\cdots\Big)\\
				&=:C\Big(\sum_{k\neq 0, k\in\mathbb{Z}}\|n_{k}(0)\|_{L^{2}}+T_{1}+\cdots \Big).
			\end{aligned}
		\end{equation*}
where
\begin{equation*}
				\begin{aligned}
					\|f_{1}\|_{L^{2}}&\leq \frac{C(R)}{A}\Big(\sum_{l\in\mathbb{Z}\backslash\{0,k\}}|l|^{-\frac12}\|w_{l}\|_{L^{2}}\|n_{k-l}\|_{L^{2}}+\cdots\Big)
				\end{aligned}
			\end{equation*}
(see \eqref{nk Xa} and \eqref{T2 0}). We estimate the norm by considering the characteristics of each of the four cases based on frequency.
Moreover, our result requires no smallness assumption on the initial cell mass or on the initial velocity field; the global bounded-ness is achieved solely by the strength of the Taylor-Couette flow (i.e., a sufficiently large $A$).
	\end{remark}

\begin{remark}\label{local well-posedness}
		The result of local well-posedness of the system (\ref{ini1}) is standard, which can be refered to \cite{Hu1, Wink2013}, and we omitted it.
	\end{remark}	
	The stabilizing phenomenon is fundamentally caused by the enhanced dissipation induced by the Taylor-Couette flow. We first recall the space-time estimate of the following system (see Proposition 6.1 in \cite{AHL2024}), which plays a crucial role in the subsequent analysis. Let
	\begin{equation}\label{eq:space time}
		\left\{
		\begin{array}{lr}
			\partial_{t}h-\frac{1}{A}\Big(\partial_{r}^{2}-\frac{k^{2}-\frac14}{r^{2}} \Big)h+\frac{ik}{r^{2}}h+\frac{1}{r}\big[ikh_{1}-r^{\frac12}\partial_{r}(r^{\frac12}h_{2}) \big]=0,\\
			h|_{t=0}=h(0),\quad h|_{r=1,R}=0,
		\end{array}
		\right.
	\end{equation}
	where $h_{1}$ and $ h_{2}$ are given functions.
	\begin{proposition}\label{prop:space time}
		For $k\in\mathbb{Z}\backslash\{0\}$, let $h$ be a solution to \eqref{eq:space time} with $h(0)\in L^{2}$. Given $\log R\leq CA^{\frac13}$, then there exists a constant $a>0$ independent of $A, k, R$, such that it holds
		\begin{equation*}
			\begin{aligned}
				&\|{\rm e}^{aA^{-\frac13}|k|^{\frac23}R^{-2}t}h\|_{L^{\infty}L^{2}}+A^{-\frac16}|k|^{\frac13}R^{-1}\|{\rm e}^{aA^{-\frac13}|k|^{\frac23}R^{-2}t}h\|_{L^{2}L^{2}}\\&+A^{-\frac12}\|{\rm e}^{aA^{-\frac13}|k|^{\frac23}R^{-2}t}\partial_{r}h\|_{L^{2}L^{2}}+A^{-\frac12}|k|\Big\|{\rm e}^{aA^{-\frac13}|k|^{\frac23}R^{-2}t}\frac{h}{r}\Big\|_{L^{2}L^{2}}\\\leq&C\left(\|h(0)\|_{L^{2}}+A^{\frac16}|k|^{-\frac13}\|{\rm e}^{aA^{-\frac13}|k|^{\frac23}R^{-2}t}kh_{1}\|_{L^{2}L^{2}}+A^{\frac12}\|{\rm e}^{aA^{-\frac13}|k|^{\frac23}R^{-2}t}h_{2}\|_{L^{2}L^{2}} \right).
			\end{aligned}
		\end{equation*}
	\end{proposition}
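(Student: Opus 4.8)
The plan is to regard \eqref{eq:space time} as an abstract dissipative evolution $\partial_t h=\mathcal L_k h+F$ on $L^2(1,R)$ with Dirichlet conditions, where the spatial generator
\[
\mathcal L_k=\frac1A\Big(\partial_r^2-\frac{k^2-\frac14}{r^2}\Big)-\frac{ik}{r^2}
\]
decomposes as $\mathcal L_k=-\mathcal A-i\mathcal B$ with $\mathcal A=\frac1A\big(-\partial_r^2+(k^2-\tfrac14)r^{-2}\big)\ge 0$ self-adjoint (note $k^2\ge 1>\tfrac14$) and $\mathcal B=k r^{-2}$ a real multiplier, and with inhomogeneity $F=-\frac1r\big[ikh_1-r^{\frac12}\partial_r(r^{\frac12}h_2)\big]$. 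The whole estimate is powered by one pseudospectral bound for $\mathcal L_k$; I would establish that bound first and then package it into the stated space-time inequality.

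\textbf{Step 1 (resolvent lower bound --- the crux).} I would prove that there is $c>0$, independent of $A,k,R$, with $\inf_{\lambda\in\mathbb R}\|(\mathcal L_k-i\lambda)h\|_{L^2}\ge c\,A^{-\frac13}|k|^{\frac23}R^{-2}\|h\|_{L^2}$. The mechanism is the nondegeneracy of the transport frequency: $\mathcal B'=-2kr^{-3}$ never vanishes and satisfies $|\mathcal B'|\ge 2|k|R^{-3}$ on $[1,R]$. Testing $(\mathcal L_k-i\lambda)h=f$ against $h$ and separating real and imaginary parts yields $A^{-1}\big(\|\partial_r h\|^2+(k^2-\tfrac14)\|h/r\|^2\big)=-\mathrm{Re}\langle f,h\rangle$ and $\langle(\mathcal B+\lambda)h,h\rangle=-\mathrm{Im}\langle f,h\rangle$, which already closes the bound when $\lambda$ lies away from the range of $-\mathcal B$. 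For resonant $\lambda$ there is a critical radius $r_\ast$ with $\mathcal B(r_\ast)=-\lambda$; I localize to an interval of width $L=(A|\mathcal B'(r_\ast)|)^{-1/3}\lesssim (A|k|)^{-1/3}R$, replace $\mathcal B+\lambda$ by its linearization $\mathcal B'(r_\ast)(r-r_\ast)$, and compare with the complex Airy operator $\partial_s^2-is$, whose numerical range is bounded away from the imaginary axis. The Airy rescaling produces precisely the gain $A^{-\frac13}|\mathcal B'(r_\ast)|^{\frac23}$, and using $|\mathcal B'(r_\ast)|\gtrsim |k|R^{-3}$ --- equivalently $|\mathcal B'(r_\ast)|^{\frac23}\gtrsim |k|^{\frac23}R^{-2}$ --- gives the asserted rate $A^{-\frac13}|k|^{\frac23}R^{-2}$. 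The hypothesis $\log R\le CA^{\frac13}$ enters here: it guarantees that $L$ fits inside the annulus and that the errors from the variation of $\mathcal B'$ across the localization interval and from the Hardy-type constants attached to the potential $(k^2-\tfrac14)r^{-2}$ remain subordinate, so that all constants stay uniform in $A,k,R$.

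\textbf{Step 2 (from resolvent bound to space-time estimate).} Set $\nu:=A^{-\frac13}|k|^{\frac23}R^{-2}$ and $g:={\rm e}^{a\nu t}h$ with $a\in(0,c)$; then $\partial_t g=(\mathcal L_k+a\nu)g+{\rm e}^{a\nu t}F$. Through the now-standard reduction relating pseudospectral bounds to enhanced-dissipation space-time inequalities (the quantitative Gearhart--Pr\"uss framework / Wei's resolvent method), the bound of Step 1 yields the weighted $L^\infty L^2$ control and the borderline $L^2L^2$ control $A^{-\frac16}|k|^{\frac13}R^{-1}\|g\|_{L^2L^2}$. Meanwhile the real part of the energy identity for $g$ produces the dissipation norms $A^{-1/2}\|\partial_r g\|_{L^2L^2}$ and $A^{-1/2}|k|\|g/r\|_{L^2L^2}$, while the resolvent bound is exactly what absorbs the gain term $a\nu\|g\|_{L^2L^2}^2$ that the bare energy cannot control. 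The two forcing contributions are then handled by duality against these dissipation norms: pairing $-ikh_1/r$ with $g$ and using Young's inequality yields the weight $A^{\frac16}|k|^{-\frac13}$ on $\|{\rm e}^{a\nu t}kh_1\|_{L^2L^2}$, while for $h_2$ an integration by parts moves $\partial_r$ onto $g$ (this is precisely why $F$ carries the symmetric form $r^{\frac12}\partial_r(r^{\frac12}\,\cdot\,)$) and pairs against $A^{-1/2}\|\partial_r g\|$, producing the weight $A^{\frac12}$ on $\|{\rm e}^{a\nu t}h_2\|_{L^2L^2}$. Choosing $a$ small enough to absorb the residual constants collects everything into the stated inequality.

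The decisive difficulty is Step 1, and within it the resonant case: the enhanced rate $A^{-\frac13}|k|^{\frac23}R^{-2}$ is invisible in any single energy identity and emerges only from the competition between the $A^{-1}$ diffusion and the non-constant transport frequency $k/r^2$. Keeping every constant uniform in $R$ --- in particular confirming that the minimal slope $|\mathcal B'|\sim R^{-3}$ at $r=R$ is what dictates the $R^{-2}$ in the rate, and that $\log R\le CA^{\frac13}$ suffices to tame the annular geometry and the boundary effects in the Airy comparison --- is the part that requires the most care.
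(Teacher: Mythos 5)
You should first be aware that the paper does not prove this proposition at all: it is quoted verbatim from Proposition~6.1 of An--He--Li \cite{AHL2024}, and the text explicitly says so (``We first recall the space-time estimate \dots (see Proposition 6.1 in \cite{AHL2024})''). So there is no in-paper argument to compare against; what can be judged is whether your reconstruction would actually deliver the stated inequality. Your overall architecture is the right one and matches the strategy of the cited source: a pseudospectral lower bound for $\mathcal L_k$ obtained by linearizing the transport frequency $k/r^2$ at the resonant radius and comparing with the complex Airy operator, giving the rate $A^{-\frac13}|\partial_r(k/r^2)|^{\frac23}\gtrsim A^{-\frac13}|k|^{\frac23}R^{-2}$, followed by the Gearhart--Pr\"uss/Wei reduction to a weighted space-time estimate, with the dissipation norms coming from the real part of the energy identity. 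The identification of where $\log R\le CA^{\frac13}$ enters is also sensible.

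The concrete gap is in Step 2, in the treatment of the $h_1$-forcing. The claimed weight $A^{\frac16}|k|^{-\frac13}$ on $\|{\rm e}^{a\nu t}kh_1\|_{L^2L^2}$ does not follow from the duality you describe. Writing $\nu=A^{-\frac13}|k|^{\frac23}R^{-2}$, the borderline norm on the left-hand side is $\nu^{\frac12}\|{\rm e}^{a\nu t}h\|_{L^2L^2}$; absorbing $|\langle ikh_1/r,g\rangle|\le\|kh_1\|_{L^2}\|g\|_{L^2}$ into $\nu\|g\|_{L^2}^2$ by Young produces the dual weight $\nu^{-\frac12}=A^{\frac16}|k|^{-\frac13}R$, which is off by a factor of $R$; absorbing instead into the dissipation term $A^{-1}k^2\|g/r\|_{L^2}^2$ produces the weight $A^{\frac12}|k|^{-1}$, which is not comparable to $A^{\frac16}|k|^{-\frac13}$ for large $A$; and interpolating between the two does not recover the stated exponents either. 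Since the entire point of writing the constants explicitly in $R$ (and then only assuming $\log R\le CA^{\frac13}$) is this bookkeeping, the sharp weight must be extracted inside the resolvent argument itself --- e.g.\ by proving a weighted resolvent inequality adapted to the structure $\frac1r[ikh_1-r^{\frac12}\partial_r(r^{\frac12}h_2)]$ of the forcing --- rather than by a post hoc Young inequality. As written, Step 2 asserts the conclusion without supplying this, and Step 1, while correctly described, is a plan rather than a proof (the uniformity in $R$ of the Airy comparison across the annulus is exactly the delicate part). For the purposes of this paper the honest route is the one the authors take: cite \cite{AHL2024}.
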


	Here are some notations used in this paper.
	
	\noindent\textbf{Notations}:
	\begin{itemize}
		
		\item The Fourier transform is defined by
		\begin{equation}
			f(t,r,\theta)=\sum_{k\in\mathbb{Z}}\widehat{f}_{k}(t,r){\rm e}^{ik\theta}, \nonumber
		\end{equation}
		where $\widehat{f}_{k}(t,r)=\frac{1}{2\pi}\int_{0}^{2\pi}f(t,r,\theta){\rm e}^{-ik\theta}d\theta$.

		\item For a given function $f=f(t,r,\theta)$,  we write its  zero mode by
		$$P_0f=\widehat{f}_0=\frac{1}{2\pi}\int_{0}^{2\pi}f(t,r,\theta)d\theta.$$ 
		
		\item  For  given functions $f=f(t,r,\theta)$ and $g=g(t,r)$, 
		their space norm  and  time-space norm  are defined as 
		$$\|f\|_{L^{p}([1,R]\times\mathbb{S}^{1})}=\left(\int_{0}^{2\pi}\int_{1}^{R}|f|^{p}drd\theta \right)^{\frac{1}{p}},
		\quad \|g\|_{L^{p}([1,R])}=\left(\int_{1}^{R}|g|^{p}dr \right)^{\frac{1}{p}}$$ 
		and 
		$$\|f\|_{L^{q}L^{p}}=\left\|\|f\|_{L^{p}([1,R]\times\mathbb{S}^{1})} \right\|_{L^{q}(0,t)},
		\quad \|g\|_{L^{q}L^{p}}=\left\|\|g\|_{L^{p}([1,R])} \right\|_{L^{q}(0,t)}.$$ Moreover, $\langle \cdot, \cdot\rangle$ denotes the standard $L^{2}$ scalar product.
		
		\item The total mass $\|n_{\rm in}\|_{L^{1}}$ is denoted by $M$. Clearly, Green’s identity gives
		\begin{equation*}
			\begin{aligned}
				&	\|n(t)\|_{L^{1}}\leq\|n_{\rm in}\|_{L^{1}}=:M.
			\end{aligned}
		\end{equation*}
		
		\item Throughout this paper, we denote by $ C $ a positive constant independent of $A$, $t$ and the initial data, and it may be different from line to line. $C(R)$  denotes a constant depending on the parameter $R$.
	\end{itemize}
	
	The rest part of this paper is organized as follows. In Section \ref{ideas}, some key ideas and the proof of
	Theorem \ref{thm:main} are presented. Section \ref{priori estimate} is devoted to providing a priori estimates and zero mode estimates, which are essential for the subsequent analysis.  The energy estimates for $E(t)$ and the proof of Proposition \ref{prop:E(t)} are established in Section \ref{estimate E(t)}. In Section \ref{estimate infty}, we complete the proof of Proposition \ref{prop:n infty}.

	\section{Sketch of the proof of Theorem \ref{thm:main}}\label{ideas}
	In this section, we present some
	key ideas and the proof of Theorem \ref{thm:main}.
	
	Note that the coordinate $\theta$ in TC flow is defined on $\mathbb{S}^{1}$, and it is natural to applying Fourier transform on the $\theta$ direction. Then taking Fourier transform for (\ref{ini1})-(\ref{boundary condition}) with respect to $\theta$, we obtain
	\begin{equation}\label{eq:fourier}
		\left\{
		\begin{array}{lr}
			\partial_{t}\widehat{n}_{k}-\frac{1}{A}(\partial_{r}^{2}+\frac{1}{r}\partial_{r}-\frac{k^{2}}{r^{2}})\widehat{n}_{k}+(1+\frac{1}{r^{2}})ik\widehat{n}_{k}+\frac{1}{Ar}\sum_{l\in\mathbb{Z}}i(k-l)\partial_{r}\widehat{\varphi}_{l}\widehat{n}_{k-l}\\-\frac{1}{Ar}\sum_{l\in\mathbb{Z}}il\widehat{\varphi}_{l}\partial_{r}\widehat{n}_{k-l}=-\frac{1}{Ar}\sum_{l\in\mathbb{Z}}\partial_{r}(r\widehat{n}_{l}\partial_{r}\widehat{c}_{k-l})-\frac{ik}{Ar^{2}}\sum_{l\in\mathbb{Z}}i(k-l)\widehat{n}_{l}\widehat{c}_{k-l},\\
			(\partial_{r}^{2}+\frac{1}{r}\partial_{r}-\frac{k^{2}}{r^{2}})\widehat{c}_{k}+\widehat{n}_{k}-\widehat{c}_{k}=0,\\
			\partial_{t}\widehat{w}_{k}-\frac{1}{A}(\partial_{r}^{2}+\frac{1}{r}\partial_{r}-\frac{k^{2}}{r^{2}})\widehat{w}_{k}+(1+\frac{1}{r^{2}})ik\widehat{w}_{k}+\frac{1}{Ar}\sum_{l\in\mathbb{Z}}i(k-l)\partial_{r}\widehat{\varphi}_{l}\widehat{w}_{k-l}\\-\frac{1}{Ar}\sum_{l\in\mathbb{Z}}il\widehat{\varphi}_{l}\partial_{r}\widehat{w}_{k-l}=-\frac{ik}{Ar}\widehat{n}_{k},\\
			(\partial_{r}^{2}+\frac{1}{r}\partial_{r}-\frac{k^{2}}{r^{2}})\widehat{\varphi}_{k}=\widehat{w}_{k},\\
			\widehat{n}_{k}|_{r=1,R}=\widehat{c}_{k}|_{r=1,R}=\widehat{w}_{k}|_{r=1,R}=\widehat{\varphi}_{k}|_{r=1,R}=0.
			
		\end{array}
		\right.
	\end{equation}
	Inspired by  An-He-Li \cite{AHL2024}, we introduce the weight $r^{\frac12}$ to eliminate the derivative $\frac{1}{r}\partial_{r}$. Specifically,  define 
	\begin{equation*}
		n_{k}:=r^{\frac12}{\rm e}^{ikt}\widehat{n}_{k},\quad c_{k}:=r^{\frac12}{\rm e}^{ikt}\widehat{c}_{k},\quad 
		w_{k}:=r^{\frac12}{\rm e}^{ikt}\widehat{w}_{k},\quad 
		\varphi_{k}:=r^{\frac12}{\rm e}^{ikt}\widehat{\varphi}_{k}.
	\end{equation*}
	It follows that
	\begin{equation}\label{define nk ck wk}
		\|F_{k}\|_{L^{p}}\leq R^{\frac12}\|\widehat{F}_{k}\|_{L^{p}},\quad {\rm for}\quad F\in\{n, c, w, \varphi \}~{\rm and}~p\in\{2, \infty\}.
	\end{equation}
	Denote the operator $\mathcal{L}_{k}$ as
	\begin{equation}\label{define:Lk}
		\mathcal{L}_{k}f:=-\frac{1}{A}\Big(\partial_{r}^{2}-\frac{k^{2}-\frac14}{r^{2}} \Big)f+\frac{ik}{r^{2}}f.
	\end{equation}
	Thus, the system (\ref{eq:fourier})  is transformed into
	\begin{equation}\label{eq:fourier 1}
		\left\{
		\begin{array}{lr}
			\partial_{t}n_{k}+\mathcal{L}_{k}n_{k}+\frac{\big[ik\sum_{l\in\mathbb{Z}}\partial_{r}(r^{-\frac12}\varphi_{l})n_{k-l}-r^{\frac12}\partial_{r}\left(\sum_{l\in\mathbb{Z}}ilr^{-1}\varphi_{l}n_{k-l} \right) \big]}{Ar}
			\\=-\frac{1}{Ar^{\frac12}}\partial_{r}\big[\sum_{l\in\mathbb{Z}}r^{\frac12}n_{l}\partial_{r}(r^{-\frac12}c_{k-l}) \big]-\frac{ik}{Ar^{\frac52}}\sum_{l\in\mathbb{Z}}i(k-l)n_{l}c_{k-l},\\
			\big(\partial_{r}^{2}-\frac{k^{2}-\frac14}{r^{2}} \big)c_{k}+n_{k}-c_{k}=0,\\
			\partial_{t}w_{k}+\mathcal{L}_{k}w_{k}
			+\frac{\big[ik\sum_{l\in\mathbb{Z}}\partial_{r}(r^{-\frac12}\varphi_{l})w_{k-l}-r^{\frac12}\partial_{r}\big(\sum_{l\in\mathbb{Z}}ilr^{-1}\varphi_{l}w_{k-l} \big) \big]}{Ar}=-\frac{ik}{Ar}n_{k},\\
			\big(\partial_{r}^{2}-\frac{k^{2}-\frac14}{r^{2}} \big)\varphi_{k}=w_{k},\\
			n_{k}|_{r=1,R}=c_{k}|_{r=1,R}=w_{k}|_{r=1,R}=\varphi_{k}|_{r=1,R}=0.
		\end{array}
		\right.
	\end{equation}
	
	We introduce the following norms
	\begin{equation}\label{X_a^k}
		\begin{aligned}
			\|f_k\|_{X_{a}^{k}}&=\|{\rm e}^{aA^{-\frac13}|k|^{\frac23}R^{-2}t}f_k\|_{L^{\infty}L^{2}}+A^{-\frac16}|k|^{\frac13}R^{-1}\|{\rm e}^{aA^{-\frac13}|k|^{\frac23}R^{-2}t}f_k\|_{L^{2}L^{2}}\\&\quad +A^{-\frac12}\|{\rm e}^{aA^{-\frac13}|k|^{\frac23}R^{-2}t}\partial_{r}f_k\|_{L^{2}L^{2}}
			+A^{-\frac12}|k|\Big\|{\rm e}^{aA^{-\frac13}|k|^{\frac23}R^{-2}t}\frac{f_k}{r}\Big\|_{L^{2}L^{2}}
		\end{aligned}
	\end{equation}
	and 
	\begin{equation}\label{Xa}
		\|f\|_{Y_{a}}=\sum_{k\neq 0, k\in\mathbb{Z}}\|f_k\|_{X_{a}^{k}}.
	\end{equation}
	Moreover, we construct the energy functional as follows:
	\begin{equation*}
		E(t)=\|n\|_{Y_{a}}+\|w\|_{Y_{a}}
	\end{equation*}
	with the initial norm    
	\begin{equation*}
		E_{\rm in}=\sum_{k\neq 0, k\in\mathbb{Z}}\|n_{k}(0)\|_{L^{2}}+\sum_{k\neq 0, k\in\mathbb{Z}}\|w_{k}(0)\|_{L^{2}}.
	\end{equation*}
	The proof of the main result relies on a bootstrap argument. Let's designate $T$ as the terminal point of the largest range $[0,T]$ such that the following hypothesis hold
	\begin{equation}\label{assumption}
		\begin{aligned}
			E(t) &\leq 2\mathcal{Q}_{1},\\
			\|n\|_{L^{\infty}L^{\infty}}&\leq 2\mathcal{Q}_{2}
		\end{aligned}
	\end{equation}
	for $t\in[0,T]$, where $\mathcal{Q}_{1}$ and $\mathcal{Q}_{2}$ are constants independent of $t$ and $A$ and will be decided during the calculation. 
	
	The following propositions are key to obtaining the main results.
	\begin{proposition}\label{prop:E(t)}
		Assume that the initial data $0\leq n_{\rm in}\in L^{\infty}\cap H^{1}(\mathcal{D})$ and $u_{\rm in}\in H^{2}(\mathcal{D})$. Under the conditions of 
		\eqref{assumption}, there exists a positive constant $\mathcal{C}_{2}$ depending on $\|n_{\rm in}\|_{L^{\infty}\cap H^{1}(\mathcal{D})}$
		and $\|u_{\rm in}\|_{H^{2}(\mathcal{D})}$, such that if $A\geq \mathcal{C}_{2}$, then there holds
		\begin{equation*}
			E(t)\leq \mathcal{Q}_{1}
		\end{equation*}
		for all $t\in[0,T]$.
	\end{proposition}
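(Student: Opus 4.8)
The plan is to establish the bound one Fourier mode at a time, by recasting each nonzero mode of the $n$- and $w$-equations in \eqref{eq:fourier 1} as the linear problem \eqref{eq:space time} with a forcing built from the transport and chemotactic nonlinearities, and then invoking the space-time estimate of Proposition \ref{prop:space time}. Concretely, for $k\neq0$ I would rewrite the $n_k$-equation in the divergence form
\begin{equation*}
	\partial_{t}n_{k}+\mathcal{L}_{k}n_{k}+\frac{1}{r}\big[ik\,h_{1}^{n}-r^{\frac12}\partial_{r}(r^{\frac12}h_{2}^{n})\big]=0,
\end{equation*}
and analogously the $w_k$-equation with data $(h_{1}^{w},h_{2}^{w})$, so that Proposition \ref{prop:space time} gives
\begin{equation*}
	\begin{aligned}
		\|n_{k}\|_{X_{a}^{k}}\leq C\Big(&\|n_{k}(0)\|_{L^{2}}+A^{\frac16}|k|^{-\frac13}\|{\rm e}^{aA^{-\frac13}|k|^{\frac23}R^{-2}t}k\,h_{1}^{n}\|_{L^{2}L^{2}}\\
		&+A^{\frac12}\|{\rm e}^{aA^{-\frac13}|k|^{\frac23}R^{-2}t}h_{2}^{n}\|_{L^{2}L^{2}}\Big),
	\end{aligned}
\end{equation*}
together with the parallel inequality for $\|w_{k}\|_{X_{a}^{k}}$. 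Summing over $k\neq0$ bounds $E(t)=\|n\|_{Y_{a}}+\|w\|_{Y_{a}}$ by $CE_{\rm in}$ plus the forcing contributions, so the whole problem reduces to estimating the latter under the bootstrap hypothesis \eqref{assumption}.

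To identify the forcing, I would match the transport and chemotactic terms of \eqref{eq:fourier 1} against the divergence form above: the pieces carrying an explicit factor $ik$ (the advection term $\tfrac1A\sum_{l}\partial_{r}(r^{-\frac12}\varphi_{l})n_{k-l}$ and the chemotactic term $-\tfrac{1}{Ar^{3/2}}\sum_{l}i(k-l)n_{l}c_{k-l}$) are collected into $h_{1}^{n}$, while the flux-type pieces (the advection term $\tfrac1A\sum_{l}il\,r^{-3/2}\varphi_{l}n_{k-l}$ and the chemotactic flux $\tfrac1A\sum_{l}r^{\frac12}n_{l}\partial_{r}(r^{-\frac12}c_{k-l})$) go into $h_{2}^{n}$; for $w_k$ the linear coupling $-\tfrac{ik}{Ar}n_{k}$ and the $\varphi$-advection are assigned to $(h_{1}^{w},h_{2}^{w})$ in the same way. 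The structural simplification here is that the elliptic relations $(\partial_{r}^{2}-\tfrac{k^{2}-1/4}{r^{2}})\varphi_{k}=w_{k}$ and $(\partial_{r}^{2}-\tfrac{k^{2}-1/4}{r^{2}})c_{k}+n_{k}-c_{k}=0$ allow me to control $\varphi_{l}$, $c_{k-l}$ and their $r$-derivatives by $w_{l}$ and $n_{k-l}$ through the elliptic a priori estimates of Section \ref{priori estimate}; in particular one gains a factor $|l|^{-\frac12}$ when passing from $w_{l}$ to the velocity coefficient $\partial_{r}(r^{-\frac12}\varphi_{l})$, and the one-dimensional Sobolev embedding in $r$ converts this into an $L^{\infty}_{r}$ bound on the advecting field.

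Each forcing term then becomes a frequency convolution of the profiles $\{n_{k}\}$ and $\{w_{k}\}$ weighted by powers of $A$, $|k|$, and $|l|$. To place these in the weighted $L^{2}L^{2}$ norms I would split the exponential by subadditivity, ${\rm e}^{aA^{-1/3}|k|^{2/3}R^{-2}t}\leq {\rm e}^{aA^{-1/3}|l|^{2/3}R^{-2}t}\,{\rm e}^{aA^{-1/3}|k-l|^{2/3}R^{-2}t}$ (since $|k|^{2/3}\leq|l|^{2/3}+|k-l|^{2/3}$), distributing one weighted factor onto each profile; then Hölder in time matches one factor to an $L^{\infty}L^{2}$ component and the other to an $L^{2}L^{2}$ component of the relevant $X_{a}^{\cdot}$-norms. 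The frequency summation is organized into four regimes according to which of $|l|$, $|k-l|$, $|k|$ dominates, so that the weight $|k|^{2/3}$ and the dissipative gains can be allocated to guarantee convergence. The terms $l=0$ and $l=k$ (transport by the mean field $\varphi_{0}$ and the contribution of the zero modes $n_{0},w_{0}$) fall outside $Y_{a}$ and are instead controlled separately by the a priori and zero-mode estimates of Section \ref{priori estimate}, contributing harmless factors depending on $\|n_{\rm in}\|_{L^{\infty}\cap H^{1}}$, $\|u_{\rm in}\|_{H^{2}}$ and $R$.

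The decisive term, and the main obstacle, is the transport contribution $T_{1}$ to $h_{1}^{n}$ highlighted in the Remark, where $\|f_{1}\|_{L^{2}}\lesssim \tfrac{C(R)}{A}\sum_{l}|l|^{-\frac12}\|w_{l}\|_{L^{2}}\|n_{k-l}\|_{L^{2}}$ is multiplied by $A^{\frac16}|k|^{-\frac13}|k|$ and summed over $k$. Here the angular derivative $\partial_{\theta}$ (the factor $|k|$) is a genuine derivative loss that must be absorbed by the dissipative gain encoded in the $A^{-\frac16}|k|^{\frac13}\|\cdot\|_{L^{2}L^{2}}$ and $A^{-\frac12}|k|\|\cdot/r\|_{L^{2}L^{2}}$ components of $X_{a}^{k}$; choosing which component to match in each frequency regime (for the linear coupling $-\tfrac{ik}{Ar}n_{k}$ in the $w$-equation, for instance, the $A^{-\frac12}|k|\|\cdot/r\|_{L^{2}L^{2}}$ component is what makes the net exponent favorable) is precisely the delicate point. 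The crux is to verify, case by case, that after using the elliptic gain $|l|^{-\frac12}$, the weight splitting, and Hölder in time, the net power of $A$ is strictly negative, of the form $A^{-\sigma}$ with $\sigma>0$, so that every nonlinear contribution is dominated by $CA^{-\sigma}$ times $E(t)$, $E(t)^{2}$, or $\mathcal{Q}_{2}E(t)$. Under \eqref{assumption} this yields $E(t)\leq CE_{\rm in}+CA^{-\sigma}\big(\mathcal{Q}_{1}+\mathcal{Q}_{1}^{2}+\mathcal{Q}_{2}\mathcal{Q}_{1}\big)$; fixing $\mathcal{Q}_{1}=2CE_{\rm in}$ and then taking $A\geq\mathcal{C}_{2}$ large enough that the second term is at most $\tfrac12\mathcal{Q}_{1}$ closes the bootstrap and gives $E(t)\leq\mathcal{Q}_{1}$ on $[0,T]$.
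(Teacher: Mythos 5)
Your proposal follows essentially the same route as the paper's proof: recasting each nonzero mode in the divergence form \eqref{eq:nk wk}, applying Proposition \ref{prop:space time} mode by mode, identifying the forcings exactly as in \eqref{f1 f2}, exploiting the elliptic gain $|l|^{-\frac12}$ from Lemma \ref{lem:varphi k>0}, splitting the exponential weight via $|k|^{\frac23}\leq|l|^{\frac23}+|k-l|^{\frac23}$, running the four-case frequency analysis with Young's inequality for discrete convolution, handling the $l=0$ and $l=k$ contributions through the zero-mode estimates of Lemma \ref{lem:n0 w0}, and closing the bootstrap by choosing $\mathcal{Q}_{1}$ from the initial data and taking $A$ large. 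The plan is correct and matches the paper's argument in all essential respects.
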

	\begin{proposition}\label{prop:n infty}
		Under the assumptions of Proposition \ref{prop:E(t)}, there exists a positive constant $\mathcal{Q}_{2}$ depending on $\|u_{\rm in}\|_{H^{2}(\mathcal{D})}$ and $\|n_{\rm in}\|_{L^{\infty}\cap H^{1}(\mathcal{D})}$, such that 
		\begin{equation*}
			\|n\|_{L^{\infty}L^{\infty}}\leq\mathcal{Q}_{2}
		\end{equation*}
		for all $t\in[0,T]$.
	\end{proposition}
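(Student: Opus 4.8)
The plan is to propagate the $L^{\infty}$ bound on $n$ by an Alikakos--Moser $L^{p}$ iteration run directly on the full density, harvesting the Taylor--Couette flow only indirectly. The key conceptual point is that advection by the divergence-free field cancels in every $L^{p}$ estimate, so a large $A$ cannot tame the chemotactic nonlinearity by itself; instead, the enhanced-dissipation bound $E(t)\le 2\mathcal{Q}_{1}$ from Proposition~\ref{prop:E(t)} forces all nonzero modes $n_{k}$ ($k\ne 0$) to be exponentially small, so that $n$ is driven toward its angular average $\bar n:=P_0 n$, and the radial (one-dimensional) Patlak--Keller--Segel dynamics is subcritical. Accordingly I would work throughout with the splitting $n=\bar n+n_{\neq}$, where $n_{\neq}:=n-\bar n$.

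First I would derive the $L^{p}$ energy identity. Multiplying the $n$-equation by $n^{p-1}$ and integrating, the diffusion yields the coercive term $-\tfrac{4(p-1)}{Ap^{2}}\|\nabla n^{p/2}\|_{L^{2}}^{2}$, the transport term vanishes by $\nabla\cdot v=0$, and the chemotactic term, after using $\Delta c=c-n$, produces the critical contribution $\tfrac{p-1}{pA}\int n^{p+1}$ together with a favorably signed lower-order piece $-\tfrac{p-1}{pA}\int n^{p}c$ that is controlled by the elliptic bound for $c$ in terms of $n$. The heart of the step is to estimate $\int n^{p+1}$ through the decomposition: the zero-mode part $\int\bar n^{p+1}$ reduces to a genuinely one-dimensional integral over $[1,R]$ (the radial weight is comparable to $1$ on the annulus), where one-dimensional Gagliardo--Nirenberg lets me absorb it into the radial portion of $\tfrac{4(p-1)}{Ap^{2}}\|\nabla n^{p/2}\|_{L^{2}}^{2}$ \emph{with no smallness restriction}, the constant depending only on the controlled one-dimensional mass. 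Every remaining term carries at least one factor of $n_{\neq}$, which I would bound using $E(t)$ together with the interpolation and summation over $k$ built into the $Y_{a}$ norm; for $A$ large these are small and either absorbed into the dissipation or treated as a harmless forcing.

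Having closed a differential inequality for $\|n\|_{L^{p}}^{p}$ uniformly in $t$, I would then track the $p$-dependence of all constants and iterate over $p_{j}=2^{j}$ in the Moser scheme, letting $p\to\infty$ to obtain a bound on $\|n\|_{L^{\infty}L^{\infty}}$ in terms of $\|n_{\rm in}\|_{L^{\infty}\cap H^{1}(\mathcal{D})}$, $\|u_{\rm in}\|_{H^{2}(\mathcal{D})}$ and $R$ only. Choosing $\mathcal{Q}_{2}$ to be this bound and taking $A$ suitably large then improves the bootstrap hypothesis $\|n\|_{L^{\infty}L^{\infty}}\le 2\mathcal{Q}_{2}$ to $\le\mathcal{Q}_{2}$.

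The main obstacle I anticipate is the scale-invariance of the critical term: since diffusion and chemotaxis share the same $1/A$ prefactor, the entire gain must come from the near-radiality enforced by mixing, so I must genuinely show that the $n_{\neq}$-contributions to $\int n^{p+1}$ are small. This requires converting the exponentially weighted space--time control of the nonzero modes (the factor ${\rm e}^{aA^{-1/3}|k|^{2/3}R^{-2}t}$ and the frequency weights in $X_{a}^{k}$) into usable pointwise-in-$r$ estimates via one-dimensional Agmon and Gagliardo--Nirenberg inequalities and a convergent summation over $k$, and then ensuring that the growth of the Gagliardo--Nirenberg constants in $p$ does not overwhelm the iteration, so that the final $L^{\infty}$ bound is uniform in both $A$ and $t$.
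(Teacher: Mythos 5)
Your plan is workable, but it handles the key step by a genuinely different route than the paper. You propose to substitute $\Delta c=c-n$ so that the chemotactic term produces the critical contribution $\tfrac{p-1}{pA}\int n^{p+1}$, and then to split $n=\bar n+n_{\neq}$ \emph{inside} the $L^{p}$ hierarchy, beating the zero-mode part by one-dimensional subcriticality and the rest by enhanced dissipation. The paper never forms $\int n^{p+1}$ and never decomposes $n$ in the $L^{p}$ estimate: it multiplies by $2pn^{2p-1}$, keeps the chemotactic term in the form $\int n^{p}\,\nabla c\cdot\nabla n^{p}$ (plus lower-order pieces), and subordinates it entirely to the \emph{uniform-in-time} bounds $\|c\|_{L^{\infty}L^{\infty}}+\|(1,\partial_{r},\tfrac1r\partial_{\theta})c\|_{L^{\infty}L^{4}}\leq C(R)(D_{1}+\mathcal{Q}_{1})$, which follow from the elliptic equation for $c$ together with the already-established $L^{2}$-level control of $n$ (the zero-mode bound $D_{1}$ of Lemma \ref{lem:n0 w0} and the bootstrap $E(t)\leq2\mathcal{Q}_{1}$); a Nash inequality, a barrier argument, and the same Moser--Alikakos iteration then close the estimate. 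So the ``gain from the flow'' that you want to re-derive at every level $p$ is, in the paper, concentrated once and for all at the $L^{2}$ level, and the $L^{\infty}$ step becomes soft; this exploits the parabolic--elliptic structure ($\|\nabla c\|_{L^{4}}\lesssim\|n\|_{L^{2}}$) and keeps $\mathcal{Q}_{2}$ free of any self-reference. If you pursue your version, two points need care that your sketch glosses over: (i) the dissipation controls $\|\nabla n^{p/2}\|_{L^{2}}$, not $\|\partial_{r}\bar n^{p/2}\|_{L^{2}}$, so you should run the one-dimensional Gagliardo--Nirenberg argument on $P_{0}(n^{p/2})$ (using Jensen to compare it with $\bar n^{p/2}$) rather than on $(P_{0}n)^{p/2}$; and (ii) the $X_{a}^{k}$ norm controls $\partial_{r}n_{k}$ only in $L^{2}_{t}$ with a factor $A^{1/2}$, so $\|n_{\neq}\|_{L^{\infty}_{t}L^{\infty}_{x}}$ is not directly small and you must either work with $L^{2}_{t}L^{\infty}_{x}$ bounds (checking that the net power of $A$ stays negative) or fall back on the bootstrap hypothesis $\|n\|_{L^{\infty}L^{\infty}}\leq2\mathcal{Q}_{2}$ and verify that the resulting sublinear self-dependence of $\mathcal{Q}_{2}$ still closes.
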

	
	\begin{proof}[Proof of Theorem \ref{thm:main}]
		Taking $A_{1}=\max\{\mathcal{C}_{1}, \mathcal{C}_{2}\}$ and	combining Proposition \ref{prop:E(t)} and Proposition \ref{prop:n infty} with
		Corollary \ref{coro1}
		and the well-posedness of system as in Remark \ref{local well-posedness}, we complete the proof.
	\end{proof}

	\section{A priori estimates and zero mode estimates}\label{priori estimate}
	\subsection{Elliptic estimates for $c$}
	\begin{lemma}\label{lem:ck}
		Suppose that $|k|\geq 1$. Let
		\begin{equation}\label{eq:ck}
			\Big(\partial_{r}^{2}-\frac{k^{2}-\frac14}{r^{2}} \Big)c_{k}+n_{k}-c_{k}=0,\quad c_{k}|_{r=1,R}=0.
		\end{equation}
		Then it holds that
		\begin{equation*}
			\begin{aligned}
				&	\|\partial_{r}c_{k}\|_{L^{2}}+k\big\|\frac{c_{k}}{r} \big\|_{L^{2}}+\|c_{k}\|_{L^{2}}\leq C\|n_{k}\|_{L^{2}},\\
				&\|r^{2}\partial_{r}^{2}c_{k}\|_{L^{2}}+k^{2}\|c_{k}\|_{L^{2}}+k\|r\partial_{r}c_{k}\|_{L^{2}}\leq C(R) \|n_{k}\|_{L^{2}}
			\end{aligned}
		\end{equation*}
		and 
		\begin{equation*}
			\|c_{k}\|_{L^{\infty}}\leq C(R) \|n_{k}\|_{L^{2}}.
		\end{equation*}
	\end{lemma}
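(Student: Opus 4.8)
The plan is to obtain all three groups of estimates by energy methods applied directly to the ODE \eqref{eq:ck}, using the Dirichlet conditions $c_k|_{r=1,R}=0$ to discard every boundary term, and then to close the $L^\infty$ bound by a one‑dimensional Sobolev embedding. Throughout I abbreviate $\mu:=k^2-\tfrac14$ and record the elementary facts that $|k|\geq 1$ forces $\tfrac34 k^2\le\mu\le k^2$, and that since $r\in[1,R]$ each weight $r$ or $r^{-1}$ only costs a power of $R$.

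First I would multiply \eqref{eq:ck} by $\overline{c_k}$, integrate over $[1,R]$, integrate by parts in the $\partial_r^2$ term (boundary contribution vanishing), and take real parts to get the identity
\[
\|\partial_r c_k\|_{L^2}^2+\mu\Big\|\tfrac{c_k}{r}\Big\|_{L^2}^2+\|c_k\|_{L^2}^2=\mathrm{Re}\int_1^R n_k\,\overline{c_k}\,dr\le\|n_k\|_{L^2}\|c_k\|_{L^2}.
\]
All three terms on the left are nonnegative, so the last one alone yields $\|c_k\|_{L^2}\le\|n_k\|_{L^2}$, after which the whole left side is $\le\|n_k\|_{L^2}^2$; combining this with $\mu\ge\tfrac34 k^2$ to rewrite the weighted term as $k\|c_k/r\|_{L^2}$ finishes the first group with an absolute constant.

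The substantive step is the second group, where the real issue is gaining the extra power of $|k|$: the first estimate only controls $k\|c_k/r\|_{L^2}$ and hence $k\|c_k\|_{L^2}$, whereas $k^2\|c_k\|_{L^2}$ is required. To recover it, I would instead test \eqref{eq:ck} against the weighted factor $\overline{c_k}/r^2$; integrating by parts, the cross term produced by $\partial_r(r^{-2})$ reorganizes into $3\int_1^R|c_k|^2 r^{-4}\,dr$, and taking real parts gives
\[
\Big\|\tfrac{\partial_r c_k}{r}\Big\|_{L^2}^2+(\mu-3)\Big\|\tfrac{c_k}{r^2}\Big\|_{L^2}^2+\Big\|\tfrac{c_k}{r}\Big\|_{L^2}^2=\mathrm{Re}\int_1^R n_k\,\tfrac{\overline{c_k}}{r^2}\,dr\le\|n_k\|_{L^2}\Big\|\tfrac{c_k}{r^2}\Big\|_{L^2}.
\]
For $|k|$ large enough that $\mu-3\ge\tfrac12\mu$ (i.e. $|k|\ge 3$), the first term absorbs the right‑hand side and produces $\|c_k/r^2\|_{L^2}\le C\mu^{-1}\|n_k\|_{L^2}$ and $\|\partial_r c_k/r\|_{L^2}\le C\mu^{-1/2}\|n_k\|_{L^2}$. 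Converting the weights through $r\in[1,R]$ (so $\|c_k\|_{L^2}\le R^2\|c_k/r^2\|_{L^2}$ and $\|r\partial_r c_k\|_{L^2}\le R^2\|\partial_r c_k/r\|_{L^2}$) and using $k^2\le 2\mu$ then yields $k^2\|c_k\|_{L^2}+k\|r\partial_r c_k\|_{L^2}\le C(R)\|n_k\|_{L^2}$; the finitely many remaining frequencies $|k|\in\{1,2\}$ are dispatched trivially, since there $k$ is bounded and $\|c_k\|_{L^2}\le\|n_k\|_{L^2}$ already. Finally, solving the equation pointwise for the top derivative, $r^2\partial_r^2 c_k=\mu c_k+r^2 c_k-r^2 n_k$, and inserting the bounds above together with $\mu\|c_k\|_{L^2}\le k^2\|c_k\|_{L^2}$ controls $\|r^2\partial_r^2 c_k\|_{L^2}$ by $C(R)\|n_k\|_{L^2}$.

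The $L^\infty$ estimate is then immediate: since $c_k(1)=0$, the fundamental theorem of calculus gives $c_k(r)=\int_1^r\partial_s c_k(s)\,ds$, so Cauchy–Schwarz and the first group yield $\|c_k\|_{L^\infty}\le (R-1)^{1/2}\|\partial_r c_k\|_{L^2}\le C(R)\|n_k\|_{L^2}$. The only genuine obstacle is the middle group: extracting the full $k^2$ (rather than $k$) weight forces the weighted multiplier $\overline{c_k}/r^2$ and an absorption argument that is valid only once $\mu-3>0$, so the low‑frequency regime must be separated off and handled by the elementary first estimate; the rest is routine integration by parts and a one‑dimensional embedding.
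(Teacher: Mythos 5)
Your argument is correct, and the first group of estimates and the $L^\infty$ bound are obtained essentially as in the paper (the paper invokes the Gagliardo--Nirenberg inequality for the $L^\infty$ step where you use the fundamental theorem of calculus and Cauchy--Schwarz from the boundary condition $c_k(1)=0$; both are fine). The genuinely different part is the second group. The paper does not use a weighted multiplier at all: it rewrites the equation as $r^2\partial_r^2 c_k - k^2 c_k = (r^2-\tfrac14)c_k - r^2 n_k$, takes the $L^2$ norm of both sides, expands the square on the left, and integrates by parts the cross term $-2k^2\langle r^2\partial_r^2 c_k, c_k\rangle = 2k^2\|r\partial_r c_k\|_{L^2}^2 - 2k^2\|c_k\|_{L^2}^2$, so that a single identity delivers $\|r^2\partial_r^2 c_k\|_{L^2}$, $k^2\|c_k\|_{L^2}$ and $k\|r\partial_r c_k\|_{L^2}$ simultaneously and uniformly in $k$ (the term $k^2\|c_k\|_{L^2}^2$ on the right being absorbed via the first group). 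Your route --- testing against $\overline{c_k}/r^2$, absorbing via $\mu-3\ge\tfrac12\mu$, and separating off $|k|\in\{1,2\}$ --- is a valid alternative and your identity checks out (the cross term indeed contributes $-3\|c_k/r^2\|_{L^2}^2$), but it pays for the extra power of $k$ with a low-frequency case split and then has to recover $\|r^2\partial_r^2 c_k\|_{L^2}$ separately from the equation, whereas the paper's squaring trick gets everything in one stroke with constants uniform in $k$. Both proofs are complete and yield the stated constants $C$ and $C(R)$.
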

	\begin{proof}
		Multiplying (\ref{eq:ck}) by $-c_{k}$, the energy estimate shows that
		\begin{equation*}
			\|\partial_{r}c_{k}\|_{L^{2}}^{2}+\Big(k^{2}-\frac14 \Big)\big\|\frac{c_{k}}{r} \big\|_{L^{2}}^{2}+\|c_{k}\|_{L^{2}}^{2}=\langle n_{k}, c_{k} \rangle\leq \|n_{k}\|_{L^{2}}\|c_{k}\|_{L^{2}}.
		\end{equation*}
		This gives that
		\begin{equation}\label{ck kck}
			4\|\partial_{r}c_{k}\|_{L^{2}}^{2}+4k^{2}\big\|\frac{c_{k}}{r} \big\|_{L^{2}}^{2}+\|c_{k}\|_{L^{2}}^{2}\leq 2\|n_{k}\|_{L^{2}}^{2}.
		\end{equation}
		Due to (\ref{eq:ck}), there holds
		\begin{equation}\label{ck''}
			\begin{aligned}
				&\quad\|r^{2}\partial_{r}^{2}c_{k}\|_{L^{2}}^{2}+k^{4}\|c_{k}\|_{L^{2}}^{2}-2k^{2}\langle r^{2}\partial_{r}^{2}c_{k}, c_{k}\rangle=\|r^{2}\partial_{r}^{2}c_{k}-k^{2}c_{k}\|_{L^{2}}^{2}\\&=\Big\|\Big(r^{2}-\frac14 \Big)c_{k}-r^{2}n_{k} \Big\|_{L^{2}}^{2}\leq C\left(\|r^{2}c_{k}\|_{L^{2}}^{2}+\|r^{2}n_{k}\|_{L^{2}}^{2} \right).
			\end{aligned}
		\end{equation}
		Using integration by parts, $-2k^{2}\langle r^{2}\partial_{r}^{2}c_{k}, c_{k}\rangle$ can be controlled as
		\begin{equation*}
			\begin{aligned}
				2k^{2}\langle r^{2}\partial_{r}c_{k},\partial_{r}c_{k} \rangle+2k^{2}\langle 2r\partial_{r}c_{k}, c_{k}\rangle=2k^{2}\|r\partial_{r}c_{k}\|_{L^{2}}^{2}-2k^{2}\|c_{k}\|_{L^{2}}^{2}.
			\end{aligned}
		\end{equation*}
		This along with (\ref{ck kck}) and (\ref{ck''}) implies that
		\begin{equation}\label{ck k2}
			\begin{aligned}
				&\quad\|r^{2}\partial_{r}^{2}c_{k}\|_{L^{2}}^{2}+k^{4}\|c_{k}\|_{L^{2}}^{2}+2k^{2}\|r\partial_{r}c_{k}\|_{L^{2}}^{2}\\&\leq C\left(k^{2}\|c_{k}\|_{L^{2}}^{2}+\|r^{2}c_{k}\|_{L^{2}}^{2}+\|r^{2}n_{k}\|_{L^{2}}^{2} \right) \leq C R^{4}\|n_{k}\|_{L^{2}}^{2}.
			\end{aligned}
		\end{equation}
		By applying (\ref{ck kck}) and Gagliardo-Nirenberg inequality, one deduces
		\begin{equation*}
			\begin{aligned}
				\|c_{k}\|_{L^{\infty}}\leq C(R)\|c_{k}\|_{L^{2}}^{\frac12}\|\partial_{r}c_{k}\|_{L^{2}}^{\frac12}\leq C(R) \|n_{k}\|_{L^{2}}.
			\end{aligned}
		\end{equation*}
		By combining  (\ref{ck kck}) and (\ref{ck k2}), the proof is complete.
	\end{proof}
	
	\begin{lemma}\label{lem:c0}
		Let $\widehat{c}_{0}$ and $\widehat{n}_{0}$ be the zero mode of $c$ and $n$, respectively, satisfying 
		\begin{equation}\label{eq:c0}
			-\Big(\partial_{r}^{2}+\frac{1}{r}\partial_{r}\Big)\widehat{c}_{0}+\widehat{c}_{0}=\widehat{n}_{0},\quad \widehat{c}_{0}|_{r=1,R}=0.
		\end{equation}
		Then it holds that
		\begin{equation*}
			\begin{aligned}
				\|r^{\frac12}(\widehat{c}_{0}, \partial_{r}\widehat{c}_{0}, \partial_{r}^{2}\widehat{c}_{0})\|_{L^{2}}&\leq C\|r^{\frac12}\widehat{n}_{0}\|_{L^{2}},\\
				\|(1, \partial_{r} )\widehat{c}_{0}\|_{L^{\infty}}&\leq C(R)\|r^{\frac12}\widehat{n}_{0}\|_{L^{2}}.
			\end{aligned}
		\end{equation*}
	\end{lemma}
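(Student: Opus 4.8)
The plan is to run a weighted energy estimate adapted to the radial measure $r\,dr$, which is the natural one for the zero mode, and then to bootstrap the second-derivative and $L^{\infty}$ bounds directly from the equation. First I would put the radial operator into divergence form, observing that $r\big(\partial_{r}^{2}+\frac{1}{r}\partial_{r}\big)\widehat{c}_{0}=\partial_{r}(r\,\partial_{r}\widehat{c}_{0})$. Testing \eqref{eq:c0} against $r\,\widehat{c}_{0}$ and integrating over $[1,R]$ then gives, after one integration by parts,
\[
\int_{1}^{R} r|\partial_{r}\widehat{c}_{0}|^{2}\,dr+\int_{1}^{R} r|\widehat{c}_{0}|^{2}\,dr=\int_{1}^{R} r\,\widehat{n}_{0}\,\widehat{c}_{0}\,dr,
\]
where the boundary contribution $-\big[r\,\partial_{r}\widehat{c}_{0}\,\widehat{c}_{0}\big]_{1}^{R}$ vanishes thanks to the Dirichlet condition $\widehat{c}_{0}|_{r=1,R}=0$. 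Rewriting this in terms of the weighted norm $\|r^{\frac12}\cdot\|_{L^{2}}$ and applying Cauchy-Schwarz on the right-hand side yields $\|r^{\frac12}\partial_{r}\widehat{c}_{0}\|_{L^{2}}+\|r^{\frac12}\widehat{c}_{0}\|_{L^{2}}\le C\|r^{\frac12}\widehat{n}_{0}\|_{L^{2}}$.

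Next, for the second derivative I would solve the equation pointwise for $\partial_{r}^{2}\widehat{c}_{0}$, namely $\partial_{r}^{2}\widehat{c}_{0}=-\frac{1}{r}\partial_{r}\widehat{c}_{0}+\widehat{c}_{0}-\widehat{n}_{0}$, multiply by $r^{\frac12}$, and estimate term by term. The only point requiring care is the first term: since $r\ge 1$ on the annulus we have $r^{-\frac12}\le r^{\frac12}$, so
\[
\big\|r^{\frac12}\tfrac{1}{r}\partial_{r}\widehat{c}_{0}\big\|_{L^{2}}=\|r^{-\frac12}\partial_{r}\widehat{c}_{0}\|_{L^{2}}\le\|r^{\frac12}\partial_{r}\widehat{c}_{0}\|_{L^{2}},
\]
which is already controlled by the previous step; the remaining two terms are bounded directly by $\|r^{\frac12}\widehat{c}_{0}\|_{L^{2}}$ and $\|r^{\frac12}\widehat{n}_{0}\|_{L^{2}}$. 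This gives the claimed bound on $\|r^{\frac12}\partial_{r}^{2}\widehat{c}_{0}\|_{L^{2}}$.

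Finally, for the $L^{\infty}$ estimates I would exploit that on the compact interval $[1,R]$ the weight satisfies $1\le r^{\frac12}\le R^{\frac12}$, so the weighted and unweighted $L^{2}$ norms are equivalent up to constants depending on $R$; the first two steps therefore also control $\|\widehat{c}_{0}\|_{L^{2}}$, $\|\partial_{r}\widehat{c}_{0}\|_{L^{2}}$ and $\|\partial_{r}^{2}\widehat{c}_{0}\|_{L^{2}}$ by $C(R)\|r^{\frac12}\widehat{n}_{0}\|_{L^{2}}$. Applying the one-dimensional Gagliardo-Nirenberg inequality $\|f\|_{L^{\infty}([1,R])}\le C\|f\|_{L^{2}}^{\frac12}\|\partial_{r}f\|_{L^{2}}^{\frac12}+C(R)\|f\|_{L^{2}}$ first with $f=\widehat{c}_{0}$ and then with $f=\partial_{r}\widehat{c}_{0}$ delivers both $\|\widehat{c}_{0}\|_{L^{\infty}}$ and $\|\partial_{r}\widehat{c}_{0}\|_{L^{\infty}}$. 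I do not expect any genuine obstacle here: the problem is a standard coercive elliptic estimate, and the only thing to watch is the bookkeeping of the $r^{\frac12}$ weight — ensuring the boundary terms vanish and using $r\ge1$ to trade a factor $r^{-\frac12}$ for $r^{\frac12}$ — which parallels the weighted substitution already used for the nonzero modes in Lemma \ref{lem:ck}.
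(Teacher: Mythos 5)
Your proposal is correct and follows essentially the same route as the paper: a weighted energy estimate against $r\,\widehat{c}_{0}$ (using the divergence form $\partial_{r}(r\partial_{r}\widehat{c}_{0})$ and the Dirichlet conditions), recovery of $\partial_{r}^{2}\widehat{c}_{0}$ pointwise from the equation with the observation $r^{-\frac12}\le r^{\frac12}$ on $[1,R]$, and the one-dimensional Gagliardo--Nirenberg inequality for the $L^{\infty}$ bounds. No gaps.
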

	\begin{proof}
		The basic energy estimates yield
		\begin{equation*}
			\|r^{\frac12}\widehat{c}_{0}\|_{L^{2}}^{2}+\|r^{\frac12}\partial_{r}\widehat{c}_{0}\|_{L^{2}}^{2}\leq \|r^{\frac12}\widehat{c}_{0}\|_{L^{2}}\|r^{\frac12}\widehat{n}_{0}\|_{L^{2}},
		\end{equation*} 
		which implies that
		\begin{equation}\label{c0 c0'}
			\|r^{\frac12}\widehat{c}_{0}\|_{L^{2}}^{2}+2\|r^{\frac12}\partial_{r}\widehat{c}_{0}\|_{L^{2}}^{2}\leq\|r^{\frac12}\widehat{n}_{0}\|_{L^{2}}^{2}.
		\end{equation}
		By using (\ref{eq:c0}) and (\ref{c0 c0'}), we get
		\begin{equation*}
			\begin{aligned}
				\|r^{\frac12}\partial_{r}^{2}\widehat{c}_{0}\|_{L^{2}}^{2}\leq C\left(\|r^{-\frac12}\partial_{r}\widehat{c}_{0}\|_{L^{2}}^{2}+\|r^{\frac12}\widehat{c}_{0}\|_{L^{2}}^{2}+\|r^{\frac12}\widehat{n}_{0}\|_{L^{2}}^{2}\right) \leq C\|r^{\frac12}\widehat{n}_{0}\|_{L^{2}}^{2}.
			\end{aligned}
		\end{equation*}
		Combining this with (\ref{c0 c0'}) and Gagliardo-Nirenberg inequality, one obtains
		$$ \|\widehat{c}_{0}\|_{L^{\infty}}\leq C(R)\|\widehat{c}_{0}\|_{L^{2}}^{\frac12}\|\partial_{r}\widehat{c}_{0}\|_{L^{2}}^{\frac12}\leq C(R)\|r^{\frac12}\widehat{c}_{0}\|_{L^{2}}^{\frac12}\|r^{\frac12}\partial_{r}\widehat{c}_{0}\|_{L^{2}}^{\frac12}\leq C(R)\|r^{\frac12}\widehat{n}_{0}\|_{L^{2}} $$
		and
		\begin{equation*}
			\begin{aligned}
				\|\partial_{r}\widehat{c}_{0}\|_{L^{\infty}}&\leq C(R)\left(\|\partial_{r}\widehat{c}_{0}\|_{L^{2}}^{\frac12}\|\partial_{r}^{2}\widehat{c}_{0}\|_{L^{2}}^{\frac12}+\|\partial_{r}\widehat{c}_{0}\|_{L^{2}} \right)\\
				&\leq C(R)\left(\|r^{\frac12}\partial_{r}\widehat{c}_{0}\|_{L^{2}}^{\frac12}\|r^{\frac12}\partial_{r}^{2}\widehat{c}_{0}\|_{L^{2}}^{\frac12}+\|r^{\frac12}\partial_{r}\widehat{c}_{0}\|_{L^{2}} \right)
				\leq C(R)\|r^{\frac12}\widehat{n}_{0}\|_{L^{2}}.
			\end{aligned}
		\end{equation*}

	\end{proof}

	\subsection{Elliptic estimates for the stream function $\varphi_{k}$ with $k\geq 0$}
	\begin{lemma}[Lemma A.3 in \cite{AHL2024}] \label{lem:varphi k>0}
		Suppose that $|k|\geq 1$. Let $w_{k}=\left(\partial_{r}^{2}-\frac{k^{2}-\frac14}{r^{2}} \right)\varphi_{k}$ with $\varphi_{k}|_{r=1,R}=0$. Then there holds
		\begin{equation*}
			\begin{aligned}
				&\|r^{\frac12}\partial_{r}\varphi_{k}\|_{L^{\infty}}+|k|\|r^{-\frac12}\varphi_{k}\|_{L^{\infty}}\leq C(R)|k|^{-\frac12}\|rw_{k}\|_{L^{2}}.
			\end{aligned}
		\end{equation*}
	\end{lemma}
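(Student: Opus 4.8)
The plan is to reduce everything to one-dimensional energy and interpolation estimates on the interval $[1,R]$, exploiting that the weights $r^{\pm 1/2}$ are bounded above and below by constants depending only on $R$, so that the genuine content is merely tracking powers of $|k|$. Throughout I will freely use $1\le r\le R$, which in particular gives $\|w_{k}\|_{L^{2}}\le\|rw_{k}\|_{L^{2}}$ and lets me absorb the weights into $C(R)$.

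First I would run the basic energy estimate: multiply $w_{k}=\big(\partial_{r}^{2}-\frac{k^{2}-1/4}{r^{2}}\big)\varphi_{k}$ by $-\varphi_{k}$, integrate over $[1,R]$, and integrate by parts using $\varphi_{k}|_{r=1,R}=0$. This yields $\|\partial_{r}\varphi_{k}\|_{L^{2}}^{2}+(k^{2}-\frac14)\|\varphi_{k}/r\|_{L^{2}}^{2}\le\|w_{k}\|_{L^{2}}\|\varphi_{k}\|_{L^{2}}$. Since $\|\varphi_{k}\|_{L^{2}}\le R\|\varphi_{k}/r\|_{L^{2}}$ and $k^{2}-\frac14\ge\frac34 k^{2}$ for $|k|\ge 1$, feeding this back gives $\|\varphi_{k}/r\|_{L^{2}}\le C(R)|k|^{-2}\|w_{k}\|_{L^{2}}$, hence $\|\varphi_{k}\|_{L^{2}}\le C(R)|k|^{-2}\|w_{k}\|_{L^{2}}$ and $\|\partial_{r}\varphi_{k}\|_{L^{2}}\le C(R)|k|^{-1}\|w_{k}\|_{L^{2}}$.

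Next I would recover the second derivative directly from the equation, writing $\partial_{r}^{2}\varphi_{k}=w_{k}+\frac{k^{2}-1/4}{r^{2}}\varphi_{k}$. The key quantitative point is that the apparently $k^{2}$-large term is tamed: $(k^{2}-\frac14)\|\varphi_{k}/r^{2}\|_{L^{2}}\le(k^{2}-\frac14)\|\varphi_{k}/r\|_{L^{2}}\le C(R)\|w_{k}\|_{L^{2}}$ by the previous step (using $r\ge 1$), so that $\|\partial_{r}^{2}\varphi_{k}\|_{L^{2}}\le C(R)\|w_{k}\|_{L^{2}}$ with no growth in $|k|$. Securing this cancellation is the one step that really has to come out right.

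Finally I would pass to $L^{\infty}$ via one-dimensional interpolation. Since $\varphi_{k}$ vanishes at $r=1$, Agmon's inequality gives $\|\varphi_{k}\|_{L^{\infty}}^{2}\le 2\|\varphi_{k}\|_{L^{2}}\|\partial_{r}\varphi_{k}\|_{L^{2}}$, whence $\|\varphi_{k}\|_{L^{\infty}}\le C(R)|k|^{-3/2}\|w_{k}\|_{L^{2}}$ and therefore $|k|\,\|r^{-1/2}\varphi_{k}\|_{L^{\infty}}\le C(R)|k|^{-1/2}\|w_{k}\|_{L^{2}}$. For $\partial_{r}\varphi_{k}$, which need not vanish on the boundary, I would instead use the Gagliardo-Nirenberg inequality $\|\partial_{r}\varphi_{k}\|_{L^{\infty}}\le C\big(\|\partial_{r}\varphi_{k}\|_{L^{2}}^{1/2}\|\partial_{r}^{2}\varphi_{k}\|_{L^{2}}^{1/2}+\|\partial_{r}\varphi_{k}\|_{L^{2}}\big)$; inserting the bounds from the previous two steps gives $\|r^{1/2}\partial_{r}\varphi_{k}\|_{L^{\infty}}\le C(R)|k|^{-1/2}\|w_{k}\|_{L^{2}}$. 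Adding the two contributions and using $\|w_{k}\|_{L^{2}}\le\|rw_{k}\|_{L^{2}}$ produces the claimed estimate. The only real obstacle is the bookkeeping of $|k|$-powers -- in particular the non-growth of $\|\partial_{r}^{2}\varphi_{k}\|_{L^{2}}$ in the third step and the matching $|k|^{-1/2}$ rate appearing in both $L^{\infty}$ terms; everything else is routine on the compact interval $[1,R]$.
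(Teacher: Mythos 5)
Your proof is correct. Note that the paper itself does not prove this lemma --- it is imported verbatim as Lemma A.3 of \cite{AHL2024} --- so there is no in-paper argument to compare against; your proposal supplies a valid, self-contained elementary derivation. The three key points all check out: the energy identity with $k^{2}-\frac14\geq\frac34k^{2}$ gives $\|\varphi_{k}/r\|_{L^{2}}\leq C(R)|k|^{-2}\|w_{k}\|_{L^{2}}$ and $\|\partial_{r}\varphi_{k}\|_{L^{2}}\leq C(R)|k|^{-1}\|w_{k}\|_{L^{2}}$; the cancellation $(k^{2}-\frac14)\|\varphi_{k}/r^{2}\|_{L^{2}}\leq k^{2}\|\varphi_{k}/r\|_{L^{2}}\leq C(R)\|w_{k}\|_{L^{2}}$ tames the second derivative; and the two interpolation inequalities land both terms at the rate $|k|^{-1/2}$. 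The only cosmetic remark is that $\varphi_{k}$ is a complex Fourier mode, so the energy step should pair the equation with $-\overline{\varphi_{k}}$ and take real parts, which changes nothing.
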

	\begin{lemma}\label{lem:varphi k=0}
		Let $\widehat{\varphi}_{0}$ and $\widehat{w}_{0}$ be the zero mode of $\varphi$ and $w$, respectively, satisfying
		\begin{equation}\label{eq:varphi 0}
			\Big(\partial_{r}^{2}+\frac{1}{r}\partial_{r} \Big)\widehat{\varphi}_{0}=\widehat{w}_{0},\quad \widehat{\varphi}_{0}|_{r=1,R}=\widehat{w}_{0}|_{r=1,R}=0.
		\end{equation}
		Then it holds that
		\begin{equation}\label{varphi_0 infty}
			\begin{aligned}
				&\|\widehat{\varphi}_{0}\|_{L^{\infty}}\leq C(R) \|r\widehat{w}_{0}\|_{L^{2}},
				\\&\|\partial_{r}\widehat{\varphi}_{0}\|_{L^{\infty}}\leq C(R)\|r^{\frac32}\widehat{w}_{0}\|_{L^{2}}.
			\end{aligned}
		\end{equation}
	\end{lemma}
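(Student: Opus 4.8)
The equation \eqref{eq:varphi 0} is just the radial Laplacian acting on the zero mode, so the plan is to solve it explicitly by integration and to fix the two free constants using the Dirichlet conditions $\widehat\varphi_{0}(1)=\widehat\varphi_{0}(R)=0$. First I would rewrite the left-hand side in divergence form, $\partial_{r}(r\partial_{r}\widehat\varphi_{0})=r\widehat w_{0}$, and integrate once from $1$ to $r$ to get $r\partial_{r}\widehat\varphi_{0}(r)=c_{1}+\int_{1}^{r}s\widehat w_{0}(s)\,ds$, where $c_{1}=\partial_{r}\widehat\varphi_{0}(1)$ is the first integration constant. Dividing by $r$ and integrating again from $1$ to $r$, using $\widehat\varphi_{0}(1)=0$, produces an explicit formula for $\widehat\varphi_{0}$ in terms of $\widehat w_{0}$ and $c_{1}$.

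Next I would pin down $c_{1}$. Since $\widehat\varphi_{0}(1)=\widehat\varphi_{0}(R)=0$, the integral of $\partial_{r}\widehat\varphi_{0}$ over $[1,R]$ must vanish, which forces $c_{1}\log R=-\int_{1}^{R}r^{-1}\big(\int_{1}^{r}s\widehat w_{0}(s)\,ds\big)\,dr$. Because the annulus has $R>1$ we have $\log R>0$, so $c_{1}$ is well defined. This is the step where both boundary conditions are genuinely used, reflecting that the $k=0$ radial operator has the two-dimensional kernel spanned by $1$ and $\log r$; a careless treatment of the $\log r$ homogeneous solution is the one place where the argument could go wrong, either leaving $\widehat\varphi_{0}$ undetermined or introducing a spurious $\log R$ dependence that degrades the constant.

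For the estimates I would set $G(r):=\int_{1}^{r}s\widehat w_{0}(s)\,ds$ and bound it by Cauchy--Schwarz, with the choice of weight split dictating which norm appears. Writing $s=s^{3/2}\cdot s^{-1/2}$ gives $|G(r)|\leq(\log R)^{1/2}\|r^{3/2}\widehat w_{0}\|_{L^{2}}$, whereas writing $s=s\cdot 1$ gives $|G(r)|\leq(R-1)^{1/2}\|r\widehat w_{0}\|_{L^{2}}$. Using $r\geq 1$ so that $r^{-1}\leq 1$, the identity $\partial_{r}\widehat\varphi_{0}=r^{-1}(c_{1}+G(r))$ together with $|c_{1}|\leq(\log R)^{-1}\int_{1}^{R}r^{-1}|G(r)|\,dr\leq C(R)\|r^{3/2}\widehat w_{0}\|_{L^{2}}$ yields the $L^{\infty}$ bound $\|\partial_{r}\widehat\varphi_{0}\|_{L^{\infty}}\leq C(R)\|r^{3/2}\widehat w_{0}\|_{L^{2}}$. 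The bound for $\widehat\varphi_{0}$ then follows by integrating $\partial_{r}\widehat\varphi_{0}$ from $1$, but now feeding in the $\|r\widehat w_{0}\|_{L^{2}}$ version of the $G$-estimate, so that $\|\widehat\varphi_{0}\|_{L^{\infty}}\leq(R-1)\|\partial_{r}\widehat\varphi_{0}\|_{L^{\infty}}\leq C(R)\|r\widehat w_{0}\|_{L^{2}}$.

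The whole argument is elementary and there is no serious analytic obstacle; since $r\geq 1$ one in fact has $\|r\widehat w_{0}\|_{L^{2}}\leq\|r^{3/2}\widehat w_{0}\|_{L^{2}}$, so the two stated inequalities are mutually consistent and the only real bookkeeping is to track which Cauchy--Schwarz split delivers which weight. The point deserving the most care remains the explicit evaluation and estimation of the integration constant $c_{1}$ coming from the $\log r$ mode, as everything else is a direct consequence of the two successive integrations.
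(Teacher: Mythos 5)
Your proof is correct, and it takes a genuinely different route from the paper. The paper handles the first bound variationally: testing \eqref{eq:varphi 0} against $r\widehat{\varphi}_{0}$, invoking the Poincar\'e-type inequality $\|\widehat{\varphi}_{0}\|_{L^{2}}\leq 2\|r\partial_{r}\widehat{\varphi}_{0}\|_{L^{2}}$ (obtained by integration by parts), and then Gagliardo--Nirenberg to pass to $L^{\infty}$; for the second bound it simply cites Lemma A.5 of \cite{AHL2024} and omits the argument. You instead solve the two-point boundary value problem explicitly, writing $r\partial_{r}\widehat{\varphi}_{0}=c_{1}+\int_{1}^{r}s\widehat{w}_{0}\,ds$ and determining $c_{1}$ from the vanishing of $\int_{1}^{R}\partial_{r}\widehat{\varphi}_{0}\,dr$; the two Cauchy--Schwarz splits $s=s^{3/2}\cdot s^{-1/2}$ and $s=s\cdot 1$ then deliver the two weights directly. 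Your computation of $c_{1}$ is right, the handling of the $\log r$ homogeneous solution is the correct point of care, and the resulting estimates are valid. What your approach buys is that it is fully self-contained (it actually proves the second inequality rather than outsourcing it) and it in fact yields the slightly stronger bound $\|\partial_{r}\widehat{\varphi}_{0}\|_{L^{\infty}}\leq C(R)\|r\widehat{w}_{0}\|_{L^{2}}$, since $\|r\widehat{w}_{0}\|_{L^{2}}\leq\|r^{3/2}\widehat{w}_{0}\|_{L^{2}}$ on $[1,R]$; what the paper's energy method buys is robustness, as it does not rely on having an explicit Green's function and so generalizes to the $k\neq 0$ modes where the homogeneous solutions are $r^{\pm k}$ rather than $1$ and $\log r$.
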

	\begin{proof}
		Due to integration by parts, there holds
		\begin{equation*}
			\|\widehat{\varphi}_{0}\|_{L^{2}}^{2}=\int_{1}^{R}\widehat{\varphi}_{0}^{2}dr=-2\int_{1}^{R}r\widehat{\varphi}_{0}\partial_{r}\widehat{\varphi}_{0}dr\leq 2\|r\partial_{r}\widehat{\varphi}_{0}\|_{L^{2}}\|\widehat{\varphi}_{0}\|_{L^{2}}.
		\end{equation*}
		This implies that
		\begin{equation}\label{var 2}
			\|\widehat{\varphi}_{0}\|_{L^{2}}\leq 2\|r\partial_{r}\widehat{\varphi}_{0}\|_{L^{2}}.
		\end{equation}
		Combining it with Gagliardo-Nirenberg inequality, we get
		\begin{equation}\label{var infty}
			\|\widehat{\varphi}_{0}\|_{L^{\infty}}\leq C(R)\|\widehat{\varphi}_{0}\|_{L^{2}}^{\frac12}\|\partial_{r}\widehat{\varphi}_{0}\|_{L^{2}}^{\frac12}\leq C(R) \|\partial_{r}\widehat{\varphi}_{0}\|_{L^{2}}.
		\end{equation}
		Moreover, by using (\ref{var 2}), the energy estimate of (\ref{eq:varphi 0}) indicates that
		\begin{equation*}
			\begin{aligned}
				\|r^{\frac12}\partial_{r}\widehat{\varphi}_{0}\|_{L^{2}}^{2}&=\langle \widehat{w}_{0}, r\widehat{\varphi}_{0} \rangle\leq\|r\widehat{w}_{0}\|_{L^{2}}\|\widehat{\varphi}_{0}\|_{L^{2}}\\&\leq 
				2\|r\widehat{w}_{0}\|_{L^{2}}\|r\partial_{r}\widehat{\varphi}_{0}\|_{L^{2}}\leq 2R^{\frac12}\|r\widehat{w}_{0}\|_{L^{2}}\|r^{\frac12}\partial_{r}\widehat{\varphi}_{0}\|_{L^{2}}.
			\end{aligned}
		\end{equation*}
		Therefore, we obtain 
		\begin{equation*}
			\|r^{\frac12}\partial_{r}\widehat{\varphi}_{0}\|_{L^{2}}\leq 2R^{\frac12}\|r\widehat{w}_{0}\|_{L^{2}}.
		\end{equation*}
		Substituting it into (\ref{var infty}), we arrive at
		\begin{equation*}
			\|\widehat{\varphi}_{0}\|_{L^{\infty}}\leq C(R)\|r\widehat{w}_{0}\|_{L^{2}},
		\end{equation*}
		which implies $(\ref{varphi_0 infty})_{1}$.
		
		The proof of $(\ref{varphi_0 infty})_{2}$ can be found in Lemma A.5 in \cite{AHL2024}, and we omit it.
	\end{proof}
	
	\subsection{The $L^{2}$ estimates for zero modes of density and vorticity}
	\begin{lemma}\label{lem:n0 w0}
		Under the assumptions of Proposition \ref{prop:E(t)}, there exists a positive constant $\mathcal{C}_{1}$ independent of $t$ and $A$, such that if $A\geq \mathcal{C}_{1}$, it holds
		\begin{equation}\label{result D1}
			\begin{aligned}
				\|r^{\frac12}\widehat{n}_{0}\|_{L^{\infty}L^{2}}\leq C(R)\left(\big\|\widehat{(n_{\rm in})}_{0}\big\|_{L^{2}}+M^{2}+1 \right)=:D_{1},
			\end{aligned}
		\end{equation}
		\begin{equation}\label{result D2}
			\|r^{\frac12}\widehat{w}_{0}\|_{L^{\infty}L^{2}}+\frac{1}{A^{\frac12}}\|r^{\frac12}\partial_{r}\widehat{w}_{0}\|_{L^{2}L^{2}}
			\leq  C(R)\left(\big\|\widehat{(w_{\rm in})}_{0}\big\|_{L^{2}}^{2}+1\right)=:D_{2}.
		\end{equation}
	\end{lemma}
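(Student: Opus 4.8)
The plan is to run a weighted $L^{2}$ energy estimate on the radial (zero-mode) equations, which I first extract by setting $k=0$ in \eqref{eq:fourier}. Two structural features are decisive. First, the Taylor--Couette transport $(1+\frac{1}{r^{2}})ik\widehat{n}_{k}$ carries a factor $k$ and so drops out of the zero mode, while the perturbed transport collapses to the divergence form $\frac{i}{Ar}\sum_{l}l\,\partial_{r}(\widehat{\varphi}_{l}\widehat{n}_{-l})$ (a consequence of incompressibility). Second, the vorticity forcing $-\frac{ik}{Ar}\widehat{n}_{k}$ vanishes at $k=0$, so the $\widehat{w}_{0}$-equation sees only transport. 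I will work in the weight $r^{\frac12}$, for which $\|r^{\frac12}\widehat{n}_{0}\|_{L^{2}}$ is the genuine $L^{2}$-norm on the annulus; multiplying the $\widehat{n}_{0}$-equation by $r\widehat{n}_{0}$ and integrating produces $\frac12\frac{d}{dt}\|r^{\frac12}\widehat{n}_{0}\|_{L^{2}}^{2}+\frac1A\|r^{\frac12}\partial_{r}\widehat{n}_{0}\|_{L^{2}}^{2}$, and, crucially, the Dirichlet Poincar\'e inequality on $[1,R]$ converts part of this dissipation into a linear damping $\sim\frac{1}{AC(R)}\|r^{\frac12}\widehat{n}_{0}\|_{L^{2}}^{2}$.

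The heart of the matter is the chemotactic term $-\frac{1}{Ar}\partial_{r}(r\sum_{l}\widehat{n}_{l}\partial_{r}\widehat{c}_{-l})$. I will split the sum into the self-interaction $l=0$ and the cross part $l\neq0$. For the self-interaction I integrate by parts, write $\widehat{n}_{0}\partial_{r}\widehat{n}_{0}=\frac12\partial_{r}(\widehat{n}_{0}^{2})$, and substitute the elliptic identity $\partial_{r}(r\partial_{r}\widehat{c}_{0})=r(\widehat{c}_{0}-\widehat{n}_{0})$ coming from \eqref{eq:c0}; this turns the term into $-\frac{1}{2A}\int_{1}^{R}r\widehat{n}_{0}^{2}\widehat{c}_{0}\,dr+\frac{1}{2A}\int_{1}^{R}r\widehat{n}_{0}^{3}\,dr$. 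The first integral is sign-definite and favorable (nonpositive, since $\widehat{n}_{0}\geq0\Rightarrow\widehat{c}_{0}\geq0$ by the maximum principle). The second, cubic, integral is the dangerous radial blow-up mechanism; I will control it by the two-dimensional Gagliardo--Nirenberg inequality applied to the radial profile together with mass conservation $\|\widehat{n}_{0}\|_{L^{1}}\lesssim M$, obtaining a bound of the shape $\frac{CM}{A}\|r^{\frac12}\partial_{r}\widehat{n}_{0}\|_{L^{2}}^{2}+\frac{C(R)}{A}(1+M^{4})$. The gradient piece is absorbed into the dissipation precisely when $A\gtrsim M$ --- this is where the strength of the Taylor--Couette flow replaces any smallness of the mass, and I expect this cubic term to be the main obstacle; everything else is comparatively routine.

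The remaining contributions --- the divergence-form transport and the cross chemotaxis $l\neq0$ --- are products of non-zero modes tested against $\partial_{r}\widehat{n}_{0}$. I will estimate them by Cauchy--Schwarz and Young, absorbing the factor $\|r^{\frac12}\partial_{r}\widehat{n}_{0}\|_{L^{2}}$ into the dissipation and bounding the surviving non-zero-mode products in space-time through the elliptic estimates of Lemmas \ref{lem:ck} and \ref{lem:varphi k>0} and the bootstrap energy $E(t)\leq 2\mathcal{Q}_{1}$ of \eqref{assumption}; the summation over the frequency $l$ uses Cauchy--Schwarz, and the enhanced-dissipation weights built into the $X_{a}^{k}$-norms \eqref{X_a^k} guarantee time-integrability. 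Collecting terms yields a damped differential inequality of the form $\frac{d}{dt}\|r^{\frac12}\widehat{n}_{0}\|_{L^{2}}^{2}+\frac{c}{A}\|r^{\frac12}\widehat{n}_{0}\|_{L^{2}}^{2}\lesssim\frac{C(R)}{A}\big(1+M^{4}+E(t)^{2}\big)$; balancing the source against the Poincar\'e damping, integrating, and taking a square root reproduces \eqref{result D1}.

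Finally, the $\widehat{w}_{0}$-estimate \eqref{result D2} follows from the same scheme but is strictly simpler: with neither chemotaxis nor forcing present, only the divergence-form transport $\frac{i}{Ar}\sum_{l}l\,\partial_{r}(\widehat{\varphi}_{l}\widehat{w}_{-l})$ must be handled, which again splits cleanly into non-zero-mode products controlled by $E(t)$, with no self-interaction to tame. The resulting damped inequality gives the $L^{\infty}L^{2}$ bound, and integrating the dissipation $\frac1A\|r^{\frac12}\partial_{r}\widehat{w}_{0}\|_{L^{2}}^{2}$ in time additionally produces the space-time term $A^{-\frac12}\|r^{\frac12}\partial_{r}\widehat{w}_{0}\|_{L^{2}L^{2}}$ appearing in \eqref{result D2}.
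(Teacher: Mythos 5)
Your overall skeleton is the same as the paper's: test the $k=0$ equations against $r\widehat{n}_{0}$ and $r\widehat{w}_{0}$, exploit that the Taylor--Couette transport and the vorticity forcing vanish on the zero mode, split the nonlinearities into a zero-mode self-interaction plus cross terms of non-zero modes, and control the cross terms through Lemmas \ref{lem:ck}, \ref{lem:varphi k>0} and the bootstrap \eqref{assumption}, with the largeness of $A$ killing the $\mathcal{Q}_{1},\mathcal{Q}_{2}$-dependent contributions (the paper's $G(t)\leq C(R)\mathcal{Q}_{1}^{4}A^{-2/3}$). The $\widehat{w}_{0}$ part of your plan is essentially identical to the paper's and is fine. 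The problem is your treatment of the self-interaction chemotactic term, which you correctly identify as the main obstacle but do not actually close. Your integration by parts producing $-\frac{1}{2A}\int_{1}^{R}r\widehat{c}_{0}\widehat{n}_{0}^{2}\,dr+\frac{1}{2A}\int_{1}^{R}r\widehat{n}_{0}^{3}\,dr$ is correct, and the sign of the first piece is indeed favorable. But the claimed absorption of the cubic piece ``when $A\gtrsim M$'' cannot work: after the rescaling $t\mapsto t/A$ \emph{both} the dissipation $\frac{1}{A}\|r^{1/2}\partial_{r}\widehat{n}_{0}\|_{L^{2}}^{2}$ and the cubic term carry the same prefactor $\frac{1}{A}$, so $A$ cancels from the comparison. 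With the critical two-dimensional Gagliardo--Nirenberg inequality $\|f\|_{L^{3}}^{3}\lesssim\|f\|_{L^{1}}\|\nabla f\|_{L^{2}}^{2}$ that you invoke, the bound is $\frac{C(R)M}{A}\|r^{1/2}\partial_{r}\widehat{n}_{0}\|_{L^{2}}^{2}$, and absorbing it requires $C(R)M\leq\frac12$, i.e.\ smallness of the mass --- exactly the hypothesis the theorem is designed to remove. Largeness of $A$ buys nothing at this step.

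The paper avoids this trap differently: it does not integrate by parts at all, but bounds the self-interaction by $\frac{1}{A}\|\partial_{r}\widehat{c}_{0}\|_{L^{\infty}}\|r^{1/2}\widehat{n}_{0}\|_{L^{2}}\|r^{1/2}\partial_{r}\widehat{n}_{0}\|_{L^{2}}$ and Lemma \ref{lem:c0}, producing a quartic source $\frac{C}{A}\|r^{1/2}\widehat{n}_{0}\|_{L^{2}}^{4}$, and then converts the remaining dissipation into a \emph{superlinear} damping via the Nash-type inequality $\|\partial_{r}\widehat{n}_{0}\|_{L^{2}}^{2}\geq\|\widehat{n}_{0}\|_{L^{2}}^{6}/(CM^{4})$ (see \eqref{GN}). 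The sextic damping dominates the quartic source as soon as $\|r^{1/2}\widehat{n}_{0}\|_{L^{2}}^{2}\gtrsim M^{4}$, and an ODE comparison/contradiction argument then yields \eqref{h(t)}; this is precisely where the $M^{2}$ in $D_{1}$ comes from, with no smallness of $M$ and no role for $A$ in that balance. Your linear Poincar\'e damping is too weak to do this job against a superlinear source. Your scheme could be rescued by replacing the 2D inequality with the subcritical one-dimensional Gagliardo--Nirenberg estimate on $[1,R]$ (e.g.\ $\|f\|_{L^{3}}^{3}\lesssim\|f\|_{L^{1}}^{5/3}\|\partial_{r}f\|_{L^{2}}^{4/3}$, so Young's inequality absorbs the gradient with a small constant unconditionally), but that route produces a bound of order $M^{5/2}$ rather than the $M^{2}$ asserted in \eqref{result D1}, so as written it proves a weaker statement than the lemma. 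Either adopt the paper's Nash-inequality mechanism or restate the constant $D_{1}$ accordingly.
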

	\begin{proof}
		{\bf Estimate (\ref{result D1}).}	Recall that $\widehat{n}_{0}$ satisfies
		\begin{equation*}
			\begin{aligned}
				\partial_{t}\widehat{n}_{0}-\frac{1}{A}\Big(\partial_{r}^{2}+\frac{1}{r}\partial_{r} \Big)\widehat{n}_{0}+\frac{1}{Ar}P_{0}(\partial_{r}\varphi\partial_{\theta}n-\partial_{\theta}\varphi\partial_{r}n)=-\frac{1}{Ar}\partial_{r}\left[P_{0}(rn\partial_{r}c) \right]
			\end{aligned}
		\end{equation*}
		with $\widehat{n}_{0}|_{r=1,R}=0$.
		Multiplying it by $r\widehat{n}_{0}$ and integrating with $r$ over $[1,R]$, we get
		\begin{equation*}
			\begin{aligned}
				&\langle \partial_{t}\widehat{n}_{0}-\frac{1}{A}\big(\partial_{r}^{2}+\frac{1}{r}\partial_{r}\big)\widehat{n}_{0}, r\widehat{n}_{0} \rangle=\langle -\frac{1}{Ar}P_{0}(\partial_{r}\varphi\partial_{\theta}n-\partial_{\theta}\varphi\partial_{r}n)-\frac{1}{Ar}\partial_{r}[P_{0}(rn\partial_{r}c)], r\widehat{n}_{0} \rangle.
			\end{aligned}
		\end{equation*}
		Observing that $\widehat{n}_{0}|_{r=1,R}=0$ and applying integration by parts to the above equation, we have
		\begin{equation}\label{estimate n0}
			\begin{aligned}
				&\frac12\frac{d}{dt}\|r^{\frac12}\widehat{n}_{0}\|_{L^{2}}^{2}+\frac{1}{A}\|r^{\frac12}\partial_{r}\widehat{n}_{0}\|_{L^{2}}^{2}\\=&-\frac{1}{A}\langle P_{0}[\partial_{r}(\varphi\partial_{\theta}n)-\partial_{\theta}(\varphi\partial_{r}n)], \widehat{n}_{0} \rangle+\frac{1}{A}\langle P_{0}(rn\partial_{r}c), \partial_{r}\widehat{n}_{0} \rangle=:I_{1}+I_{2}.
			\end{aligned}
		\end{equation}
		For given functions $f(t,r,\theta)$ and $g(t,r,\theta)$, it follows from  Fourier series that
		\begin{equation*}
			\begin{aligned}
				f(t,r,\theta)=\sum_{k\in\mathbb{Z}}\widehat{f}_{k}(t,r){\rm e}^{ik\theta}, \quad
				g(t,r,\theta)=\sum_{k\in\mathbb{Z}}\widehat{g}_{k}(t,r){\rm e}^{ik\theta},
			\end{aligned}
		\end{equation*}
		Nonlinear interactions between $f$ and $g$ show that  
		\begin{equation}\label{fg zero}
			P_{0}(fg)=\widehat{(fg)}_{0}=\sum_{k\in\mathbb{Z}}\widehat{f}_{k}(t,r)\widehat{g}_{-k}(t,r)
			=\widehat{f}_0\widehat{g}_0+\sum_{k\in\mathbb{Z},k\neq0}\widehat{f}_{k}(t,r)\widehat{g}_{-k}(t,r).
		\end{equation}
		For $I_{1}$, as $\partial_{\theta}\widehat{(\varphi\partial_{r}n)}_{0}=0$, $\partial_{\theta}\widehat{n}_{0}=0$  and (\ref{fg zero}), 
		by H\"{o}lder's inequality, 
		we obtain that 
		\begin{equation*}
			\begin{aligned}
				I_{1}=\frac{1}{A}
				\left\langle \widehat{(\varphi\partial_{\theta}n)}_{0}, \partial_{r}\widehat{n}_{0} \right\rangle 
				&\leq\frac{1}{A}\Big\|\sum_{k\in\mathbb{Z},k\neq0}\widehat{\varphi}_{k}(t,r)\widehat{\partial_\theta n}_{-k}(t,r)\Big\|_{L^2}
				\|r^{\frac12}\partial_{r}\widehat{n}_{0}\|_{L^{2}}\\
				&\leq \frac{1}{4A}\|r^{\frac12}\partial_{r}\widehat{n}_{0}\|_{L^{2}}^{2}+\frac{C}{A}\Big\|\sum_{k\in\mathbb{Z},k\neq0}k\widehat{\varphi}_{k}(t,r)\widehat{n}_{-k}(t,r)\Big\|_{L^2}^2.
			\end{aligned}
		\end{equation*}
		For $I_{2}$, by using (\ref{fg zero}) and Lemma \ref{lem:c0}, we get
		\begin{equation*}
			\begin{aligned}
				I_{2}&=\frac{1}{A}\langle r\widehat{n}_{0}\partial_{r}\widehat{c}_{0}, \partial_{r}\widehat{n}_{0} \rangle
				+\frac{1}{A}\Big\langle 
				r\sum_{k\in\mathbb{Z},k\neq0}\widehat{n}_{k}(t,r)\widehat{\partial_r c}_{-k}(t,r), \partial_{r}\widehat{n}_{0}\Big\rangle\\
				&\leq\frac{1}{A}\|\partial_{r}\widehat{c}_{0}\|_{L^{\infty}}\|r^{\frac12}\widehat{n}_{0}\|_{L^{2}}\|r^{\frac12}\partial_{r}\widehat{n}_{0}\|_{L^{2}}
				+\frac{R^{\frac12}}{A}\Big\|\sum_{k\in\mathbb{Z},k\neq0}\widehat{n}_{k}(t,r)\widehat{\partial_r c}_{-k}(t,r)\Big\|_{L^{2}}\|r^{\frac12}\partial_{r}\widehat{n}_{0}\|_{L^{2}}\\
				&\leq\frac{1}{4A}\|r^{\frac12}\partial_{r}\widehat{n}_{0}\|_{L^{2}}^{2}+\frac{C}{A}\|r^{\frac12}\widehat{n}_{0}\|_{L^{2}}^{4}+\frac{C(R)}{A}\Big\|\sum_{k\in\mathbb{Z},k\neq0}\widehat{n}_{k}(t,r)\widehat{\partial_r c}_{-k}(t,r)\Big\|^2_{L^{2}}.
			\end{aligned}
		\end{equation*}
		Collecting the estimates of $I_{1}$ and $I_{2}$, (\ref{estimate n0}) yields that
		\begin{equation}\label{estimate n0 1}
			\begin{aligned}
				&\frac{d}{dt}\|r^{\frac12}\widehat{n}_{0}\|_{L^{2}}^{2}+\frac{1}{A}\|r^{\frac12}\partial_{r}\widehat{n}_{0}\|_{L^{2}}^{2}
				\leq\frac{C}{A}\|r^{\frac12}\widehat{n}_{0}\|_{L^{2}}^{4}
				\\&	+\frac{C(R)\big(\|\sum_{k\in\mathbb{Z},k\neq0}k\widehat{\varphi}_{k}(t)\widehat{n}_{-k}(t)\|_{L^2}^2
					+\|\sum_{k\in\mathbb{Z},k\neq0}\widehat{n}_{k}(t)\widehat{\partial_r c}_{-k}(t)\|^2_{L^{2}}
					\big)}{A}.
			\end{aligned}
		\end{equation}
		Due to  Gagliardo-Nirenberg inequality, there holds
		\begin{equation*}
			\begin{aligned}
				-\|\partial_{r}\widehat{n}_{0}\|_{L^{2}}^{2}\leq-\frac{\|\widehat{n}_{0}\|_{L^{2}}^{6}}{C\|\widehat{n}_{0}\|_{L^{1}}^{4}}\leq-\frac{\|\widehat{n}_{0}\|_{L^{2}}^{6}}{CM^{4}}.
			\end{aligned}
		\end{equation*}
		As $r\in[1,R]$, we infer from the above inequality that
		\begin{equation}\label{GN}
			-\|r^{\frac12}\partial_{r}\widehat{n}_{0}\|_{L^{2}}^{2}\leq-\frac{\|r^{\frac12}\widehat{n}_{0}\|_{L^{2}}^{6}}{CR^{3}M^{4}}.
		\end{equation}
		For  all $t\geq 0$, we denote $G(t)$ by
		\begin{equation*}
			G(t):=\frac{C(R)}{A}\int_{0}^{t}\Big(\Big\|\sum_{k\in\mathbb{Z},k\neq0}k\widehat{\varphi}_{k}(s)\widehat{n}_{-k}(s)\Big\|_{L^2}^2
			+\Big\|\sum_{k\in\mathbb{Z},k\neq0}\widehat{n}_{k}(s)\widehat{\partial_r c}_{-k}(s)\Big\|^2_{L^{2}}
			\Big)ds.
		\end{equation*}
		Substituting (\ref{GN}) into (\ref{estimate n0 1}), we rewrite (\ref{estimate n0 1}) into
		\begin{equation*}
			\begin{aligned}
				\frac{d}{dt}\left(\|r^{\frac12}\widehat{n}_{0}\|_{L^{2}}^{2}-G(t) \right)\leq&-\frac{\|r^{\frac12}\widehat{n}_{0}\|_{L^{2}}^{4}}{CAR^{3}M^{4}}\left(\|r^{\frac12}\widehat{n}_{0}\|_{L^{2}}^{2}-C^{2}R^{3}M^{4} \right)\\\leq&-\frac{\|r^{\frac12}\widehat{n}_{0}\|_{L^{2}}^{4}}{CAR^{3}M^{4}}\left(\|r^{\frac12}\widehat{n}_{0}\|_{L^{2}}^{2}-G(t)-C^{2}R^{3}M^{4} \right).
			\end{aligned}
		\end{equation*}
		By  contradiction, it can be concluded that
		\begin{equation}\label{h(t)}
			\|r^{\frac12}\widehat{n}_{0}\|_{L^{2}}^{2}-G(t)\leq \big\|r^{\frac12}\widehat{(n_{\rm in})}_{0}\big\|_{L^{2}}^{2}+ 2C^{2}R^{3}M^{4}.
		\end{equation}
		Using elliptic estimates similar to Lemma \ref{lem:varphi k>0}, we have 
		\begin{equation}\label{varphi neq infty}
			\begin{aligned}
				\|k\widehat{\varphi}_{k}\|_{L^{\infty}L^{\infty}}&\leq C(R) \|\widehat{w}_{k}\|_{L^{\infty}L^{2}}.
			\end{aligned}
		\end{equation}
		Therefore, we have 
		\begin{equation}\label{phi_n}
			\begin{aligned}
				&\Big\|\sum_{k\in\mathbb{Z},k\neq0}k\widehat{\varphi}_{k}(s,r)\widehat{n}_{-k}(s,r)\Big\|_{L^2L^2}
				\\\leq& \sum_{k\in\mathbb{Z},k\neq0}\|k\widehat{\varphi}_{k}\|_{L^{\infty}L^{\infty}}\|\widehat{n}_{-k}\|_{L^2L^2}\leq C(R) \sum_{k\in\mathbb{Z},k\neq0}\|\widehat{w}_{k}\|_{L^{\infty}L^{2}}\|\widehat{n}_{-k}\|_{L^2L^2}\\
				\leq& C(R) \sum_{k\in\mathbb{Z},k\neq0} A^{\frac16}\|w_k\|_{X_{a}^{k}}\|n_k\|_{X_{a}^{k}}\leq C(R) A^{\frac16}\|w\|_{Y_{a}}\|n\|_{Y_{a}}
				\leq C(R) A^{\frac16}\mathcal{Q}_{1}^2.
			\end{aligned}
		\end{equation}
		Combining Lemma \ref{lem:ck}  with 
		\begin{equation*}
			\|\widehat{n}_{k}\|_{L^{\infty}L^{\infty}}\leq C\|n\|_{L^{\infty}L^{\infty}}\leq C\mathcal{Q}_{2},
		\end{equation*}
		we obtain that  
		\begin{equation}\label{n_c}
			\begin{aligned}
				&\Big\|\sum_{k\in\mathbb{Z},k\neq0}\widehat{n}_{k}(s,r)\widehat{\partial_r c}_{-k}(s,r)\Big\|_{L^{2}L^2}
				\\\leq& \sum_{k\in\mathbb{Z},k\neq0}
				\|\widehat{n}_{k}\|_{L^{\infty}L^{\infty}}
				\|\widehat{\partial_r c}_{-k}\|_{L^{2}L^2}\leq C \mathcal{Q}_{2}\sum_{k\neq 0, k\in\mathbb{Z}}\|{\rm e}^{aA^{-\frac13}|k|^{\frac23}R^{-2}t}\partial_{r}c_{k}\|_{L^{2}L^{2}}\\
				\leq& C\mathcal{Q}_{2}\sum_{k\neq 0, k\in\mathbb{Z}}\|{\rm e}^{aA^{-\frac13}|k|^{\frac23}R^{-2}t}n_{k}\|_{L^{2}L^{2}}\leq CA^{\frac16}\mathcal{Q}_2\|n\|_{Y_{a}} \leq C A^{\frac16}\mathcal{Q}_{1}\mathcal{Q}_{2}.
			\end{aligned}
		\end{equation}
		By \eqref{phi_n} and \eqref{n_c}, when 
		$$A\geq \max\{\mathcal{Q}_{1}^{6},\mathcal{Q}_{2}^{6}, (C\log R)^3\}=:\mathcal{C}_{1},$$ 
		$G(t)$ can be controlled as
		\begin{equation*}
			\begin{aligned}
				G(t)\leq \frac{C(R)\left(\mathcal{Q}_{1}^4+\mathcal{Q}_{2}^4 \right)}{A^{\frac23}}\leq C(R).
			\end{aligned}
		\end{equation*}
		Combining it with (\ref{h(t)}), one deduces
		\begin{equation*}
			\|r^{\frac12}\widehat{n}_{0}\|_{L^{\infty}L^{2}}\leq C(R)\left(\big\|\widehat{(n_{\rm in})}_{0}\big\|_{L^{2}}+M^{2}+1 \right).
		\end{equation*}
		
		\noindent{\bf Estimate (\ref{result D2}).}
		Note that $\widehat{w}_{0}$ satisfies
		\begin{equation*}
			\partial_{t}\widehat{w}_{0}-\frac{1}{A}\Big(\partial_{r}^{2}+\frac{1}{r}\partial_{r} \Big)\widehat{w}_{0}+\frac{1}{Ar}P_{0}(\partial_{r}\varphi\partial_{\theta}w-\partial_{\theta}\varphi\partial_{r}w)=0,\quad \widehat{w}_{0}|_{r=1,R}=0.
		\end{equation*}
		Multiplying the above equation by $r\widehat{w}_{0}$ and integrating  with $r$ over $[1,R]$, we obtain
		\begin{equation*}
			\begin{aligned}
				\langle\partial_{t}\widehat{w}_{0}-\frac{1}{A}\big(\partial_{r}^{2}+\frac{1}{r}\partial_{r}\big)\widehat{w}_{0}, r\widehat{w}_{0}\rangle=\langle -\frac{1}{Ar}P_{0}(\partial_{r}\varphi\partial_{\theta}w-\partial_{\theta}\varphi\partial_{r}w), r\widehat{w}_{0}  \rangle .
			\end{aligned}
		\end{equation*}
		This implies that
		\begin{equation}\label{eatimate w}
			\begin{aligned}
				\frac12\frac{d}{dt}\|r^{\frac12}\widehat{w}_{0}\|_{L^{2}}^{2}+\frac{1}{A}\|r^{\frac12}\partial_{r}\widehat{w}_{0}\|_{L^{2}}^{2}=-\frac{1}{A}\langle P_{0}[\partial_{r}(\varphi\partial_{\theta}w)-\partial_{\theta}(\varphi\partial_{r}w)], \widehat{w}_{0} \rangle=:J_{1}. 
			\end{aligned}
		\end{equation}
		Due to $P_{0}[\partial_{\theta}(\varphi\partial_{r}w)]=\partial_{\theta}\widehat{(\varphi\partial_{r}w)}_{0}=0$, there holds
		\begin{equation*}
			\begin{aligned}
				J_{1}=\frac{1}{A}\langle \widehat{(\varphi\partial_{\theta}w)}_{0}, \partial_{r}\widehat{w}_{0} \rangle&\leq\frac{1}{A}\|r^{-\frac12}\widehat{(\varphi\partial_{\theta}w)}_{0}\|_{L^{2}}\|r^{\frac12}\partial_{r}\widehat{w}_{0}\|_{L^{2}}\\&\leq\frac{1}{2A}\|r^{-\frac12}\widehat{(\varphi\partial_{\theta}w)}_{0}\|_{L^{2}}^{2}+\frac{1}{2A}\|r^{\frac12}\partial_{r}\widehat{w}_{0}\|_{L^{2}}^{2}.
			\end{aligned}
		\end{equation*}
		This along with (\ref{eatimate w}) gives that
		\begin{equation*}
			\begin{aligned}
				\frac{d}{dt}\|r^{\frac12}\widehat{w}_{0}\|_{L^{2}}^{2}+\frac{1}{A}\|r^{\frac12}\partial_{r}\widehat{w}_{0}\|_{L^{2}}^{2}\leq\frac{1}{A}\|r^{-\frac12}\widehat{(\varphi\partial_{\theta}w)}_{0}\|_{L^{2}}^{2}\leq\frac{1}{A}\|\widehat{(\varphi\partial_{\theta}w)}_{0}\|_{L^{2}}^{2}
				.
			\end{aligned}
		\end{equation*}
		Regarding the integration of $t$ and using (\ref{fg zero}), we obtain
		\begin{equation}\label{estimate w0 1}
			\begin{aligned}
				&\quad \|r^{\frac12}\widehat{w}_{0}\|_{L^{2}}^{2}+\frac{1}{A}\int_{0}^{t}\|r^{\frac12}\partial_{r}\widehat{w}_{0}(s)\|_{L^{2}}^{2}ds\\&\leq\big\|r^{\frac12}\widehat{(w_{\rm in})}_{0}\big\|_{L^{2}}^{2}+\frac{1}{A}\int_{0}^{t}
				\Big\|\sum_{k\in\mathbb{Z},k\neq0}k\widehat{\varphi}_{k}(s)\widehat{w}_{-k}(s)\Big\|_{L^2}^2 ds,
			\end{aligned}
		\end{equation}
		where we used 
		\begin{equation*}
			\begin{aligned}
				\widehat{(\varphi \partial_{\theta}w)}_{0}=\sum_{k\in\mathbb{Z}}\widehat{\varphi}_{k}(t,r)\widehat{\partial_{\theta} w}_{-k}(t,r)
				=-\sum_{k\in\mathbb{Z},k\neq0}ik\widehat{\varphi}_{k}(t,r)\widehat{w}_{-k}(t,r).
			\end{aligned}
		\end{equation*}
		By \eqref{varphi neq infty}, there holds 
		\begin{equation}\label{estimate w0}
			\begin{aligned}
				&\Big\|\sum_{k\in\mathbb{Z},k\neq0}k\widehat{\varphi}_{k}(s,r)\widehat{w}_{-k}(s,r)\Big\|_{L^2L^2}
				\leq \sum_{k\in\mathbb{Z},k\neq0}\|k\widehat{\varphi}_{k}\|_{L^{\infty}L^{\infty}}\|\widehat{w}_{-k}\|_{L^2L^2}\\
				&\leq C(R) \sum_{k\in\mathbb{Z},k\neq0}\|\widehat{w}_{k}\|_{L^{\infty}L^{2}}\|\widehat{w}_{-k}\|_{L^2L^2}\leq C(R) A^{\frac16}\|w\|_{Y_{a}}^2
				\leq C(R) A^{\frac16}\mathcal{Q}_{1}^2.
			\end{aligned}
		\end{equation}
		Substituting (\ref{estimate w0}) into (\ref{estimate w0 1}), 
		when $A\geq \mathcal{C}_1$,
		we arrive at
		\begin{equation*}
			\|r^{\frac12}\widehat{w}_{0}\|_{L^{\infty}L^{2}}^{2}+\frac{1}{A}\|r^{\frac12}\partial_{r}\widehat{w}_{0}\|_{L^{2}L^{2}}^{2}
			\leq C(R)\left(\big\|\widehat{(w_{\rm in})}_{0}\big\|_{L^{2}}^{2}+1\right).
		\end{equation*}

		To sum up, the proof is complete.	
	\end{proof}

	\section{The estimate of $E(t)$ and proof of Proposition \ref{prop:E(t)}}\label{estimate E(t)}
	Write the equations satisfied by $n_{k}$ and $w_{k}$ in (\ref{eq:fourier 1}) as
	\begin{equation}\label{eq:nk wk}
		\left\{
		\begin{array}{lr}
			\partial_{t}n_{k}+\mathcal{L}_{k}n_{k}+\frac{1}{r}\big[ikf_{1}-r^{\frac12}\partial_{r}(r^{\frac12}f_{2}) \big]=0,\\
			\partial_{t}w_{k}+\mathcal{L}_{k}w_{k}+\frac{1}{r}\big[ikg_{1}-r^{\frac12}\partial_{r}(r^{\frac12}g_{2}) \big]=0,\\
			n_{k}|_{r=1,R}=0,\quad w_{k}|_{r=1,R}=0,
		\end{array}
		\right.
	\end{equation}
	where 
	\begin{equation}\label{f1 f2}
		\begin{aligned}
			f_{1}&=\frac{1}{A}\sum_{l\in\mathbb{Z}}\partial_{r}(r^{-\frac12}\varphi_{l})n_{k-l}+\frac{1}{A}\sum_{l\in\mathbb{Z}}i(k-l)r^{-\frac32}n_{l}c_{k-l},\\
			f_{2}&=\frac{1}{A}\sum_{l\in\mathbb{Z}}ilr^{-\frac32}\varphi_{l}n_{k-l}-\frac{1}{A}\sum_{l\in\mathbb{Z}}n_{l}\partial_{r}(r^{-\frac12}c_{k-l}),\\
			g_{1}&=\frac{1}{A}\sum_{l\in\mathbb{Z}}\partial_{r}(r^{-\frac12}\varphi_{l})w_{k-l}+\frac{1}{A}n_{k},\\
			g_{2}&=\frac{1}{A}\sum_{l\in\mathbb{Z}}ilr^{-\frac32}\varphi_{l}w_{k-l}.
		\end{aligned}
	\end{equation}
	The following lemma provides the estimates of the nonlinear terms $f_{1}, f_{2}, g_{1}$ and $g_{2}$.
	\begin{lemma}\label{lem:f1 f2 g1 g2}
		There hold
		\begin{itemize}
			\item[(i)]
			\begin{equation*}
				\begin{aligned}
					\|f_{1}\|_{L^{2}}&\leq \frac{C(R)}{A}\Big(\sum_{l\in\mathbb{Z}\backslash\{0,k\}}|l|^{-\frac12}\|w_{l}\|_{L^{2}}\|n_{k-l}\|_{L^{2}}+|k|^{-\frac12}\|w_{k}\|_{L^{2}}\|\widehat{n}_{0}\|_{L^{2}}+\|\widehat{w}_{0}\|_{L^{2}}\|n_{k}\|_{L^{2}} \Big)\\&\quad+\frac{1}{A}\sum_{l\in\mathbb{Z}\backslash\{0,k\}}|k-l|\|c_{k-l}\|_{L^{\infty}}\|n_{l}\|_{L^{2}}+\frac{C(R)}{A}|k|\|c_{k}\|_{L^{2}}\|\widehat{n}_{0}\|_{L^{\infty}},
				\end{aligned}
			\end{equation*} 
			\item[(ii)] 
			\begin{equation*}
				\begin{aligned}
					\|f_{2}\|_{L^{2}}&\leq\frac{1}{A}\sum_{l\in\mathbb{Z}\backslash\{0,k\}}|l|\|\varphi_{l}\|_{L^{\infty}}\|n_{k-l}\|_{L^{2}}+\frac{C(R)}{A}|k|\|\varphi_{k}\|_{L^{\infty}}\|\widehat{n}_{0}\|_{L^{2}}\\&\quad+\frac{1}{A}\sum_{l\in\mathbb{Z}\backslash\{0,k\}}\|n_{l}\|_{L^{\infty}}\|\partial_{r}(r^{-\frac12}c_{k-l})\|_{L^{2}}+\frac{C(R)}{A}\|\widehat{n}_{0}\|_{L^{\infty}}\|\partial_{r}(r^{-\frac12}c_{k})\|_{L^{2}}\\&\quad+\frac{1}{A}\|n_{k}\|_{L^{2}}\|\partial_{r}(r^{-\frac12}c_{0})\|_{L^{\infty}},
				\end{aligned}
			\end{equation*}
			\item[(iii)]
			\begin{equation*}
				\begin{aligned}
					\|g_{1}\|_{L^{2}}&\leq\frac{C(R)}{A}\Big(\sum_{l\in\mathbb{Z}\backslash\{0,k\}}|l|^{-\frac12}\|w_{l}\|_{L^{2}}\|w_{k-l}\|_{L^{2}}+\|\widehat{w}_{0}\|_{L^{2}}\|w_{k}\|_{L^{2}} \Big)+\frac{1}{A}\|n_{k}\|_{L^{2}},
				\end{aligned}
			\end{equation*}
			\item[(iv)] 
			\begin{equation*}
				\|g_{2}\|_{L^{2}}\leq\frac{1}{A}\sum_{l\in\mathbb{Z}\backslash\{0,k\}}|l|\|\varphi_{l}\|_{L^{\infty}}\|w_{k-l}\|_{L^{2}}+\frac{C(R)}{A}|k|\|\varphi_{k}\|_{L^{\infty}}\|\widehat{w}_{0}\|_{L^{2}}.
			\end{equation*}
		\end{itemize}
	\end{lemma}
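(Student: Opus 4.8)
The plan is to treat each of the four nonlinear quantities $f_1,f_2,g_1,g_2$ separately, and in every case to split the convolution sum $\sum_{l\in\mathbb{Z}}$ into the three regimes $l\notin\{0,k\}$, $l=0$, and $l=k$. This trichotomy is forced by the fact that the zero modes $\widehat{n}_0,\widehat{w}_0,\widehat{\varphi}_0,\widehat{c}_0$ obey the separate elliptic bounds of Lemma~\ref{lem:c0} and Lemma~\ref{lem:varphi k=0}, whereas the nonzero modes are controlled by Lemma~\ref{lem:ck} and Lemma~\ref{lem:varphi k>0}. Throughout I would apply H\"older's inequality to each summand, placing the stream/chemical factor ($\varphi$ or $c$) in $L^\infty$ and the transported factor ($n$ or $w$) in $L^2$, and repeatedly use $r\in[1,R]$ to bound negative powers of $r$ by $1$ and positive powers by a constant $C(R)$.

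The key auxiliary computation, needed for the first summand of both $f_1$ and $g_1$, is the pointwise identity
\begin{equation*}
\partial_r(r^{-\frac12}\varphi_l)=r^{-\frac12}\partial_r\varphi_l-\frac12 r^{-\frac32}\varphi_l,
\end{equation*}
combined with the elementary bounds $r^{-\frac12}\le r^{\frac12}$ and $r^{-\frac32}\le r^{-\frac12}$ valid for $r\ge 1$; feeding these into Lemma~\ref{lem:varphi k>0} yields $\|\partial_r(r^{-\frac12}\varphi_l)\|_{L^\infty}\le C(R)|l|^{-\frac12}\|w_l\|_{L^2}$ for $l\neq 0$. For the zero mode one instead observes $r^{-\frac12}\varphi_0=\widehat{\varphi}_0$, so that $\partial_r(r^{-\frac12}\varphi_0)=\partial_r\widehat{\varphi}_0$, which Lemma~\ref{lem:varphi k=0} bounds by $C(R)\|\widehat{w}_0\|_{L^2}$. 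The analogous conversions $n_0=r^{\frac12}\widehat{n}_0$, $w_0=r^{\frac12}\widehat{w}_0$ and $c_0=r^{\frac12}\widehat{c}_0$ are precisely what produce the $\widehat{n}_0,\widehat{w}_0$ factors, together with the switch from $L^2$ to $L^\infty$ where the lemma demands it, in the $l=0$ and $l=k$ contributions.

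With these ingredients the four bounds follow by direct bookkeeping. For $f_1$ the $l\notin\{0,k\}$ part of the first sum gives the $|l|^{-\frac12}\|w_l\|_{L^2}\|n_{k-l}\|_{L^2}$ term, the $l=k$ part (via $n_0=r^{\frac12}\widehat{n}_0$) the $|k|^{-\frac12}\|w_k\|_{L^2}\|\widehat{n}_0\|_{L^2}$ term, and the $l=0$ part the $\|\widehat{w}_0\|_{L^2}\|n_k\|_{L^2}$ term; in the second sum the $l=k$ summand vanishes because of the prefactor $i(k-l)$, the $l\notin\{0,k\}$ part gives the $|k-l|\,\|c_{k-l}\|_{L^\infty}\|n_l\|_{L^2}$ term, and the $l=0$ part the $|k|\,\|c_k\|_{L^2}\|\widehat{n}_0\|_{L^\infty}$ term. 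The remaining three estimates are identical in spirit: in $f_2$ and $g_2$ the $l=0$ summand of the $il$-weighted sum drops out, while in $g_1$ the $l=k$ contribution is absorbed into the $l=0$ one since $|k|^{-\frac12}\le 1$ for $|k|\ge 1$, leaving the single $\|\widehat{w}_0\|_{L^2}\|w_k\|_{L^2}$ term, and the linear piece $\frac1A n_k$ passes through unchanged.

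The only genuinely delicate point, rather than a real obstacle, is the consistent bookkeeping of the $r$-weights across the two normalizations (capital $F_k=r^{\frac12}{\rm e}^{ikt}\widehat{F}_k$ versus $\widehat{F}_k$), in particular making sure that each power of $r$ is either neutralized by $r\ge 1$ or absorbed into $C(R)$ and that no zero-mode term is mistakenly controlled by a nonzero-mode estimate. I expect no analytic difficulty beyond H\"older's inequality and the cited elliptic lemmas; the entire content of the lemma lies in organizing the convolution so that the singular-in-$A$ structure is exposed for the later application of Proposition~\ref{prop:space time}.
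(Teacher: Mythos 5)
Your proposal is correct and follows essentially the same route as the paper's proof: split each convolution into the regimes $l\notin\{0,k\}$, $l=0$, $l=k$, apply H\"older with the $\varphi$/$c$ factor in $L^{\infty}$ and the $n$/$w$ factor in $L^{2}$, and invoke Lemma \ref{lem:varphi k>0}, Lemma \ref{lem:varphi k=0}, Lemma \ref{lem:ck} and Lemma \ref{lem:c0} for the respective modes, with $r\in[1,R]$ absorbing the weights; your key bound $\|\partial_{r}(r^{-\frac12}\varphi_{l})\|_{L^{\infty}}\leq C(R)|l|^{-\frac12}\|w_{l}\|_{L^{2}}$ is exactly the paper's estimate \eqref{varphi l}. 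The only cosmetic difference is that you handle the zero mode via the identity $r^{-\frac12}\varphi_{0}=\widehat{\varphi}_{0}$ directly, while the paper expands the derivative first; both yield \eqref{varphi 0}.
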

	\begin{proof}
		{\bf Estimate $\|f_{1}\|_{L^{2}}$.}	Recall the expression of $f_{1}$ in $(\ref{f1 f2})_{1}$, we get
		\begin{equation}\label{f1 L2}
			\begin{aligned}
				\|f_{1}\|_{L^{2}}\leq\frac{1}{A}\Big\|\sum_{l\in\mathbb{Z}}\partial_{r}(r^{-\frac12}\varphi_{l})n_{k-l}\Big\|_{L^{2}}+\frac{1}{A}\Big\|\sum_{l\in\mathbb{Z}}(k-l)r^{-\frac32}n_{l}c_{k-l}\Big\|_{L^{2}}.
			\end{aligned}
		\end{equation}
		Using Lemma \ref{lem:varphi k>0}, Lemma \ref{lem:varphi k=0} and (\ref{define nk ck wk}), direct calculations show that
		\begin{equation}\label{varphi l}
			\begin{aligned}
				\|\partial_{r}(r^{-\frac12}\varphi_{l})\|_{L^{\infty}}&\leq\frac12\|r^{-\frac32}\varphi_{l}\|_{L^{\infty}}+\|r^{-\frac12}\partial_{r}\varphi_{l}\|_{L^{\infty}}\leq|l|\|r^{-\frac12}\varphi_{l}\|_{L^{\infty}}+\|r^{\frac12}\partial_{r}\varphi_{l}\|_{L^{\infty}}\\&\leq C(R)|l|^{-\frac12}\|rw_{l}\|_{L^{2}}\leq C(R)|l|^{-\frac12}\|w_{l}\|_{L^{2}}
			\end{aligned}
		\end{equation}
		for $l\in\mathbb{Z}\backslash\{0\} $	and
		\begin{equation}\label{varphi 0}
			\begin{aligned}
				\|\partial_{r}(r^{-\frac12}\varphi_{0})\|_{L^{\infty}}&\leq\frac12\|r^{-\frac32}\varphi_{0}\|_{L^{\infty}}+\|r^{-\frac12}\partial_{r}\varphi_{0}\|_{L^{\infty}}\\&\leq\|\varphi_{0}\|_{L^{\infty}}+\|\partial_{r}\varphi_{0}\|_{L^{\infty}}\leq C(R)\left(\|\widehat{\varphi}_{0}\|_{L^{\infty}}+\|\partial_{r}\widehat{\varphi}_{0}\|_{L^{\infty}} \right)\\&\leq C(R)\|r\widehat{w}_{0}\|_{L^{2}}+C(R)\|r^{\frac32}\widehat{w}_{0}\|_{L^{2}}\leq C(R)\|\widehat{w}_{0}\|_{L^{2}}.
			\end{aligned}
		\end{equation}
		Based on the above estimates and (\ref{define nk ck wk}), we have
		{\small
			\begin{equation*}
				\begin{aligned}
					&\quad\frac{1}{A}\Big\|\sum_{l\in\mathbb{Z}}\partial_{r}(r^{-\frac12}\varphi_{l})n_{k-l}\|_{L^{2}}\\&\leq\frac{1}{A}\Big(\sum_{l\in\mathbb{Z}\backslash\{0,k\}}\|\partial_{r}(r^{-\frac12}\varphi_{l})\|_{L^{\infty}}\|n_{k-l}\|_{L^{2}}+\|\partial_{r}(r^{-\frac12}\varphi_{0})\|_{L^{\infty}}\|n_{k}\|_{L^{2}}+\|\partial_{r}(r^{-\frac12}\varphi_{k})\|_{L^{\infty}}\|n_{0}\|_{L^{2}} \Big)\\&\leq\frac{C(R)}{A}\Big(\sum_{l\in\mathbb{Z}\backslash\{0,k\}}|l|^{-\frac12}\|w_{l}\|_{L^{2}}\|n_{k-l}\|_{L^{2}}+|k|^{-\frac12}\|w_{k}\|_{L^{2}}\|\widehat{n}_{0}\|_{L^{2}}+\|\widehat{w}_{0}\|_{L^{2}}\|n_{k}\|_{L^{2}} \Big).
				\end{aligned}
		\end{equation*}}
		Combining this with  (\ref{f1 L2}) and 
		\begin{equation*}
			\begin{aligned}
				&\quad\frac{1}{A}\Big\|\sum_{l\in\mathbb{Z}}(k-l)r^{-\frac32}n_{l}c_{k-l}\Big\|_{L^{2}}\\&\leq\frac{1}{A}\sum_{l\in\mathbb{Z}\backslash\{0,k\}}\| (k-l)n_{l}c_{k-l}  \|_{L^{2}}+\frac{1}{A}\|kn_{0}c_{k} \|_{L^{2}}\\&\leq\frac{1}{A}\sum_{l\in\mathbb{Z}\backslash\{0,k\}}|k-l|\|c_{k-l}\|_{L^{\infty}}\|n_{l}\|_{L^{2}}+\frac{R^{\frac12}}{A}|k|\| c_{k} \|_{L^{2}}\|\widehat{n}_{0}\|_{L^{\infty}},
			\end{aligned}
		\end{equation*}
		we get the inequality of (i).

		\noindent{\bf Estimate $\|f_{2}\|_{L^{2}}$.} We rewrite $(\ref{f1 f2})_{2}$ into 
		\begin{equation*}
			\begin{aligned}
				f_{2}&=\frac{1}{A}\sum_{l\in\mathbb{Z}\backslash\{0,k\}}ilr^{-\frac32}\varphi_{l}n_{k-l}+\frac{ik}{A}r^{-\frac32}\varphi_{k}n_{0}\\&\quad-\frac{1}{A}\sum_{l\in\mathbb{Z}\backslash\{0,k\}}n_{l}\partial_{r}(r^{-\frac12}c_{k-l})-\frac{1}{A}n_{0}\partial_{r}(r^{-\frac12}c_{k})-\frac{1}{A}n_{k}\partial_{r}(r^{-\frac12}c_{0}).
			\end{aligned}
		\end{equation*}
		Then the $L^{2}$ norm estimation indicates
		\begin{align*}
			\|f_{2}\|_{L^{2}}&\leq\frac{1}{A}\sum_{l\in\mathbb{Z}\backslash\{0,k\}}\|lr^{-\frac32}\varphi_{l}n_{k-l}\|_{L^{2}}+\frac{|k|}{A}\|r^{-\frac32}\varphi_{k}n_{0}\|_{L^{2}}\\&\quad+\frac{1}{A}\sum_{l\in\mathbb{Z}\backslash\{0,k\}}\|n_{l}\partial_{r}(r^{-\frac12}c_{k-l})\|_{L^{2}}+\frac{1}{A}\|n_{0}\partial_{r}(r^{-\frac12}c_{k})\|_{L^{2}}+\frac{1}{A}\|n_{k}\partial_{r}(r^{-\frac12}c_{0})\|_{L^{2}}\\&\leq\frac{1}{A}\sum_{l\in\mathbb{Z}\backslash\{0,k\}}|l|\|\varphi_{l}\|_{L^{\infty}}\|n_{k-l}\|_{L^{2}}+\frac{|k|}{A}\|\varphi_{k}\|_{L^{\infty}}\|n_{0}\|_{L^{2}}\\&\quad+\frac{1}{A}\sum_{l\in\mathbb{Z}\backslash\{0,k\}}\|n_{l}\|_{L^{\infty}}\|\partial_{r}(r^{-\frac12}c_{k-l})\|_{L^{2}}+\frac{1}{A}\|n_{0}\|_{L^{\infty}}\|\partial_{r}(r^{-\frac12}c_{k})\|_{L^{2}}\\&\quad+\frac{1}{A}\|n_{k}\|_{L^{2}}\|\partial_{r}(r^{-\frac12}c_{0})\|_{L^{\infty}}.
		\end{align*}
		This along with (\ref{define nk ck wk}) gives the inequality of (ii).
		
		\noindent{\bf Estimate $\|g_{1}\|_{L^{2}}$.} It follows from $(\ref{f1 f2})_{3}$ that
		\begin{equation*}
			\begin{aligned}
				\|g_{1}\|_{L^{2}}&\leq\frac{1}{A}\sum_{l\in\mathbb{Z}\backslash\{0,k\}}\|\partial_{r}(r^{-\frac12}\varphi_{l})\|_{L^{\infty}}\|w_{k-l}\|_{L^{2}}+\frac{1}{A}\|\partial_{r}(r^{-\frac12}\varphi_{0})\|_{L^{\infty}}\|w_{k}\|_{L^{2}}\\&\quad +\frac{1}{A}\|\partial_{r}(r^{-\frac12}\varphi_{k})\|_{L^{\infty}}\|w_{0}\|_{L^{2}}+\frac{1}{A}\|n_{k}\|_{L^{2}}.
			\end{aligned}
		\end{equation*}
		This combination of (\ref{define nk ck wk}) and (\ref{varphi l})-(\ref{varphi 0}) indicates that  (iii) holds true.
		
		\noindent{\bf Estimate $\|g_{2}\|_{L^{2}}.$} According to (\ref{define nk ck wk}) and the definition of $g_{2}$ in $(\ref{f1 f2})_{4}$, we get
		\begin{equation*}
			\begin{aligned}
				\|g_{2}\|_{L^{2}}&\leq\frac{1}{A}\sum_{l\in\mathbb{Z}\backslash\{0,k\}}\|lr^{-\frac32}\varphi_{l}w_{k-l}\|_{L^{2}}+\frac{|k|}{A}\|r^{-\frac32}\varphi_{k}w_{0}\|_{L^{2}}\\&\leq\frac{1}{A}\sum_{l\in\mathbb{Z}\backslash\{0,k\}}|l|\|\varphi_{l}\|_{L^{\infty}}\|w_{k-l}\|_{L^{2}}+\frac{C(R)}{A}|k|\|\varphi_{k}\|_{L^{\infty}}\|\widehat{w}_{0}\|_{L^{2}}.
			\end{aligned}
		\end{equation*}
		
		To sum up, we complete the proof.
		
	\end{proof}
	
	Next, we are committed to estimating the energy functional $E(t)$.
	\begin{proof}[Proof of Proposition \ref{prop:E(t)}]
		{\bf Step I. Estimate $\|n\|_{Y_{a}}$.} 
		When $A\geq (C\log R)^3$,
		by applying Proposition \ref{prop:space time} to $(\ref{eq:nk wk})_{1}$, we obtain
		\begin{equation*}
			\begin{aligned}
				\|n_{k}\|_{X_{a}^{k}}\leq C\left(\|n_{k}(0)\|_{L^{2}}+A^{\frac16}|k|^{-\frac13}\|{\rm e}^{aA^{-\frac13}|k|^{\frac23}R^{-2}t}kf_{1}\|_{L^{2}L^{2}}+A^{\frac12}\|{\rm e}^{aA^{-\frac13}|k|^{\frac23}R^{-2}t}f_{2}\|_{L^{2}L^{2}} \right).
			\end{aligned}
		\end{equation*}
		Regarding the summation over $k\neq 0, k\in\mathbb{Z}$, the above expression indicates that
		\begin{equation}\label{nk Xa}
			\begin{aligned}
				\|n\|_{Y_{a}}
				&\leq C\Big( \sum_{k\neq 0, k\in\mathbb{Z}}\|n_{k}(0)\|_{L^{2}}+\sum_{k\neq 0, k\in\mathbb{Z}}A^{\frac16}|k|^{-\frac13}\|{\rm e}^{aA^{-\frac13}|k|^{\frac23}R^{-2}t}kf_{1}\|_{L^{2}L^{2}}\\&+A^{\frac12}\sum_{k\neq 0, k\in\mathbb{Z}}\|{\rm e}^{aA^{-\frac13}|k|^{\frac23}R^{-2}t}f_{2}\|_{L^{2}L^{2}}\Big)=:C\Big(\sum_{k\neq 0, k\in\mathbb{Z}}\|n_{k}(0)\|_{L^{2}}+T_{1}+T_{2} \Big).
			\end{aligned}
		\end{equation}
		Using (i) of Lemma \ref{lem:f1 f2 g1 g2}, $T_{1}$ can be controlled by
		{\small	\begin{equation}\label{T2 0}
				\begin{aligned}
					T_{1}&\leq\frac{C(R)}{A^{\frac56}}\sum_{k\neq 0, k\in\mathbb{Z}}|k|^{\frac23}\sum_{l\in\mathbb{Z}\backslash\{0,k\}}|l|^{-\frac12}\big\|\|{\rm e}^{aA^{-\frac13}|l|^{\frac23}R^{-2}t}w_{l}\|_{L^{2}}
					\|{\rm e}^{aA^{-\frac13}|k-l|^{\frac23}R^{-2}t}n_{k-l}\|_{L^{2}}
					\big\|_{L^{2}} \\&\quad+
					\frac{C(R)}{A^{\frac56}}\sum_{k\neq 0, k\in\mathbb{Z}}|k|^{\frac23}\|\widehat{n}_{0}\|_{L^{\infty}L^{2}}\|{\rm e}^{aA^{-\frac13}|k|^{\frac23}R^{-2}t}w_{k}\|_{L^{2}L^{2}}
					\\&\quad+\frac{C(R)}{A^{\frac56}}\sum_{k\neq 0, k\in\mathbb{Z}}|k|^{\frac23}\|\widehat{w}_{0}\|_{L^{\infty}L^{2}}\|{\rm e}^{aA^{-\frac13}|k|^{\frac23}R^{-2}t}n_{k}\|_{L^{2}L^{2}}\\&\quad+\frac{1}{A^{\frac56}}\sum_{k\neq 0, k\in\mathbb{Z}}|k|^{\frac23}\sum_{l\in\mathbb{Z}\backslash\{0,k\}}|k-l|\big\|\|{\rm e}^{aA^{-\frac13}|l|^{\frac23}R^{-2}t}n_{l}\|_{L^{2}}\|{\rm e}^{aA^{-\frac13}|k-l|^{\frac23}R^{-2}t}c_{k-l}\|_{L^{\infty}} \big\|_{L^{2}}\\&\quad+\frac{C(R)}{A^{\frac56}}\sum_{k\neq 0, k\in\mathbb{Z}}|k|^{\frac53}\|\widehat{n}_{0}\|_{L^{\infty}L^{\infty}}\|{\rm e}^{aA^{-\frac13}|k|^{\frac23}R^{-2}t}c_{k}\|_{L^{2}L^{2}}\\&=:T_{11}+\cdots+T_{15},
				\end{aligned}
		\end{equation}}
		where we use the following inequality
		\begin{equation}\label{k l k-l}
			|k|^{\frac23}\leq |l|^{\frac23}+|k-l|^{\frac23}~~{\rm for~any}~k, l\in\mathbb{Z}~{\rm and~any}~\alpha\in (0,1].
		\end{equation}
		Due to (\ref{assumption}), Lemma \ref{lem:ck} and Lemma \ref{lem:n0 w0}, there holds
		\begin{align*}
			T_{12}&\leq\frac{C(R)}{A^{\frac56}}\|\widehat{n}_{0}\|_{L^{\infty}L^{2}}\sum_{k\neq 0, k\in\mathbb{Z}}\Big\|{\rm e}^{aA^{-\frac13}|k|^{\frac23}R^{-2}t}\frac{kw_{k}}{r}\Big\|_{L^{2}L^{2}}\\&\leq\frac{C(R)}{A^{\frac56}}D_{1}(A^{\frac12}\|w\|_{Y_{a}} )\leq  \frac{C(R)D_{1}\mathcal{Q}_{1}}{A^{\frac13}},\\
			T_{13}&\leq\frac{C(R)}{A^{\frac56}}\|\widehat{w}_{0}\|_{L^{\infty}L^{2}}\sum_{k\neq 0, k\in\mathbb{Z}}\Big\|{\rm e}^{aA^{-\frac13}|k|^{\frac23}R^{-2}t}\frac{kn_{k}}{r}\Big\|_{L^{2}L^{2}}\\&\leq\frac{C(R)}{A^{\frac56}}D_{2}(A^{\frac12}\|n\|_{Y_{a}})\leq \frac{C(R)D_{2}\mathcal{Q}_{1}}{A^{\frac13}}
		\end{align*}
		and
		\begin{equation*}
			\begin{aligned}
				T_{15}&\leq\frac{C(R)}{A^{\frac56}}\|n\|_{L^{\infty}L^{\infty}}\sum_{k\neq 0, k\in\mathbb{Z}}\|{\rm e}^{aA^{-\frac13}|k|^{\frac23}R^{-2}t}k^{2}c_{k}\|_{L^{2}L^{2}}\\&\leq \frac{C(R)\mathcal{Q}_{2}}{A^{\frac56}}\sum_{k\neq 0, k\in\mathbb{Z}}\|{\rm e}^{aA^{-\frac13}|k|^{\frac23}R^{-2}t}n_{k}\|_{L^{2}L^{2}}\leq\frac{C(R)\mathcal{Q}_{2}}{A^{\frac56}}(A^{\frac16}R\|n\|_{Y_{a}})\leq\frac{C(R)\mathcal{Q}_{1}\mathcal{Q}_{2}}{A^{\frac23}}.
			\end{aligned}
		\end{equation*}
		Next we estimate $T_{11}$ and $T_{14}$. 
		The estimate $T_{11}$ is by divided into four cases by discussing the values of $k$ and $l$.

		\noindent{\underline{Case 1: $k>0$, $0\neq l\leq \frac{k}{2}$ or $l\geq\frac32k$. }} Under this circumstance, we have
		\begin{equation}\label{k-l k}
			|k-l|^{-1}\leq C |k|^{-1}.
		\end{equation}
		Let 
		\begin{equation}\label{define A}
			\mathcal{A}=\left\{l\in\mathbb{R}: 0\neq l\leq\frac{k}{2}~{\rm or}~l\geq \frac{3}{2}k \right\}.
		\end{equation}
		Then combining  (\ref{assumption}) with (\ref{k-l k}) and Young inequality for discrete convolution, one obtains
		\begin{align*}
			&\frac{C(R)}{A^{\frac56}}\sum_{k\neq 0, k\in\mathbb{Z}}|k|^{\frac23}\sum_{l\in\mathcal{A}}|l|^{-\frac12}\big\|\|{\rm e}^{aA^{-\frac13}|l|^{\frac23}R^{-2}t}w_{l}\|_{L^{2}}
			\|{\rm e}^{aA^{-\frac13}|k-l|^{\frac23}R^{-2}t}n_{k-l}\|_{L^{2}}
			\big\|_{L^{2}} 
			\\\leq&\frac{C(R)}{A^{\frac12}}\Big(\sum_{k\neq 0, k\in\mathbb{Z}}|k|^{\frac23}|k-l|^{-\frac23}\sum_{l\in\mathcal{A}}|l|^{-\frac12}
			\|{\rm e}^{aA^{-\frac13}|l|^{\frac23}R^{-2}t}w_{l}\|_{L^{\infty}L^{2}}\times\\&
			\big(A^{-\frac16}|k-l|^{\frac13}R^{-1}\|{\rm e}^{aA^{-\frac13}|k-l|^{\frac23}R^{-2}t}n_{k-l}\|_{L^{2}L^{2}} \big)^{\frac12}\big(A^{-\frac12}
			\big\|{\rm e}^{aA^{-\frac13}|k-l|^{\frac23}R^{-2}t}\frac{|k-l|n_{k-l}}{r}\big\|_{L^{2}L^{2}} \big)^{\frac12}
			\Big)\\\leq&\frac{C(R)}{A^{\frac12}}\sum_{k\neq 0, k\in\mathbb{Z}}\sum_{l\in\mathcal{A}}\|w_{l}\|_{X_{a}^{l}}\|n_{k-l}\|_{X_{a}^{k-l}}\\\leq&\frac{C(R)}{A^{\frac12}}\Big(\sum_{k\neq 0, k\in\mathbb{Z}}\|w_{k}\|_{X_{a}^{k}} \Big)\Big(\sum_{k\neq 0, k\in\mathbb{Z}}\|n_{k}\|_{X_{a}^{k}} \Big)\leq\frac{C(R)}{A^{\frac12}}\|w\|_{Y_{a}}\|n\|_{Y_{a}}\leq\frac{C(R)\mathcal{Q}_{1}^{2}}{A^{\frac12}}.
		\end{align*}
		
		\noindent{\underline{Case 2: $k<0$, $ l\leq \frac{3k}{2}$ or $0\neq l\geq\frac{k}{2}$. }} The estimate is similar to Case 1, since (\ref{k-l k}) still holds.
		
		\noindent{\underline{Case 3: $k>0$, $\frac{k}{2}<l<\frac32 k$ but $l\neq k$.}} 
		In this case, $|k|$ is equivalent to $|l|$. Using (\ref{assumption}) and Young inequality for discrete convolution, direct calculations show that
		\begin{equation*}
			\begin{aligned}
				&\frac{C(R)}{A^{\frac56}}\sum_{k\neq 0, k\in\mathbb{Z}}|k|^{\frac23}\sum_{\frac{k}{2}<l<\frac{3}{2}k}|l|^{-\frac12}\big\|\|{\rm e}^{aA^{-\frac13}|l|^{\frac23}R^{-2}t}w_{l}\|_{L^{2}}
				\|{\rm e}^{aA^{-\frac13}|k-l|^{\frac23}R^{-2}t}n_{k-l}\|_{L^{2}}
				\big\|_{L^{2}}
				\\\leq&\frac{C(R)}{A^{\frac56}}\bigg(\sum_{k\neq 0, k\in\mathbb{Z}}|k|^{\frac23}\sum_{\frac{k}{2}<l<\frac{3}{2}k}|l|^{-\frac12}\|{\rm e}^{aA^{-\frac13}|l|^{\frac23}R^{-2}t}w_{l}\|_{L^{2}L^{2}}|{\rm e}^{aA^{-\frac13}|k-l|^{\frac23}R^{-2}t}n_{k-l}\|_{L^{\infty}L^{2}}\bigg)\\\leq&\frac{C(R)}{A^{\frac56}}\sum_{k\neq 0, k\in\mathbb{Z}}|k|^{\frac23}\sum_{\frac{k}{2}<l<\frac{3}{2}k}|l|^{-\frac12}A^{\frac12}|l|^{-1}\|w_{l}\|_{X_{a}^{l}}\|n_{k-l}\|_{X_{a}^{k-l}}\\
				\leq&\frac{C(R)}{A^{\frac13}}\sum_{k\neq 0, k\in\mathbb{Z}}\sum_{\frac{k}{2}<l<\frac{3}{2}k}\|w_{l}\|_{X_{a}^{l}}\|n_{k-l}\|_{X_{a}^{k-l}}\leq\frac{C(R)\mathcal{Q}_{1}^{2}}{A^{\frac13}}.
			\end{aligned}
		\end{equation*}
		
		\noindent{\underline{Case 4: $k<0$, $\frac{3k}{2}<l<\frac{k}{2}$ but $l\neq k$.}} This case is the same as Case 3.
		
		Combining the four cases above, we eventually estimate $T_{11}$ as
		\begin{equation}\label{end T11}
			T_{11}\leq\frac{C(R)\mathcal{Q}_{1}^{2}}{A^{\frac13}}.
		\end{equation}
		Using Gagliardo-Nirenberg inequality
		\begin{equation*}
			\|c_{k-l}\|_{L^{\infty}}\leq C(R) \|c_{k-l}\|_{L^{2}}^{\frac12}\|\partial_{r}c_{k-l}\|_{L^{2}}^{\frac12}
		\end{equation*}
		and Lemma \ref{lem:ck}, $T_{14}$ in (\ref{T2 0}) follows that
		\begin{equation}\label{T14}
			\begin{aligned}
				T_{14}&\leq \frac{C(R)}{A^{\frac56}}
				\bigg(\sum_{k\neq 0, k\in\mathbb{Z}}|k|^{\frac23}\sum_{l\in\mathbb{Z}\backslash\{0,k\}}|k-l|^{-\frac12}\big\|\|{\rm e}^{aA^{-\frac13}|l|^{\frac23}R^{-2}t}n_{l}\|_{L^{2}}
				\\
				&\quad\times\|{\rm e}^{aA^{-\frac13}|k-l|^{\frac23}R^{-2}t}|k-l|^{2}c_{k-l}\|_{L^{2}}^{\frac12}\|{\rm e}^{aA^{-\frac13}|k-l|^{\frac23}R^{-2}t}|k-l|\partial_{r}c_{k-l}\|_{L^{2}}^{\frac12}
				\big\|_{L^{2}}\bigg)
				\\&\leq\frac{C(R)}{A^{\frac56}}\bigg(\sum_{k\neq 0, k\in\mathbb{Z}}|k|^{\frac23}\sum_{l\in\mathbb{Z}\backslash\{0,k\}}|k-l|^{-\frac12}\\&\quad\times\big\|\|{\rm e}^{aA^{-\frac13}|l|^{\frac23}R^{-2}t}n_{l}\|_{L^{2}}\|{\rm e}^{aA^{-\frac13}|k-l|^{\frac23}R^{-2}t}n_{k-l}\|_{L^{2}}  \big\|_{L^{2}}\bigg).
			\end{aligned}
		\end{equation}
		Similar to the estimation of $T_{11}$, we also discuss the four cases of the values of $k$ and $l$ to estimate $T_{14}$.
		
		\noindent{\underline{Case 1: $k>0$, $0\neq l\leq \frac{k}{2}$ or $l\geq\frac32k$. }}  Using (\ref{assumption}), (\ref{k-l k}), (\ref{define A}) and Young inequality for discrete convolution, we get
		\begin{equation*}
			\begin{aligned}
				&\frac{C(R)}{A^{\frac56}}\sum_{k\neq 0, k\in\mathbb{Z}}|k|^{\frac23}\sum_{l\in\mathcal{A}}|k-l|^{-\frac12}\big\|\|{\rm e}^{aA^{-\frac13}|l|^{\frac23}R^{-2}t}n_{l}\|_{L^{2}}\|{\rm e}^{aA^{-\frac13}|k-l|^{\frac23}R^{-2}t}n_{k-l}\|_{L^{2}}  \big\|_{L^{2}}
				\\\leq&\frac{C(R)}{A^{\frac56}}\sum_{k\neq 0, k\in\mathbb{Z}}|k|^{\frac23}\sum_{l\in\mathcal{A}}|k-l|^{-\frac32}\|{\rm e}^{aA^{-\frac13}|l|^{\frac23}R^{-2}t}n_{l}\|_{L^{\infty}L^{2}}\Big\|{\rm e}^{aA^{-\frac13}|k-l|^{\frac23}R^{-2}t}\frac{|k-l|n_{k-l}}{r} \Big\|_{L^{2}L^{2}}\\\leq&\frac{C(R)}{A^{\frac13}}\sum_{k\neq 0, k\in\mathbb{Z}}\sum_{l\in\mathcal{A}}\|n_{l}\|_{X_{a}^{l}}\|n_{k-l}\|_{X_{a}^{k-l}}\leq\frac{C(R)\|n\|^2_{Y_{a}}}{A^{\frac13}}
				\leq\frac{C(R)\mathcal{Q}_{1}^{2}}{A^{\frac13}}.
			\end{aligned}
		\end{equation*} 
		
		\noindent{\underline{Case 2: $k<0$, $ l\leq \frac{3k}{2}$ or $0\neq l\geq\frac{k}{2}$. }} This case is the same as Case 1.		
		
		\noindent{\underline{Case 3: $k>0$, $\frac{k}{2}<l<\frac32 k$ but $l\neq k$.}} By applying (\ref{assumption}) and Young inequality for discrete convolution, there holds
		\begin{equation*}
			\begin{aligned}
				&\frac{C(R)}{A^{\frac56}}\sum_{k\neq 0, k\in\mathbb{Z}}|k|^{\frac23}\sum_{\frac{k}{2}<l<\frac{3}{2}k}|k-l|^{-\frac12}\big\|\|{\rm e}^{aA^{-\frac13}|l|^{\frac23}R^{-2}t}n_{l}\|_{L^{2}}\|{\rm e}^{aA^{-\frac13}|k-l|^{\frac23}R^{-2}t}n_{k-l}\|_{L^{2}}  \big\|_{L^{2}}
				\\\leq&\frac{C(R)}{A^{\frac56}}\sum_{k\neq 0, k\in\mathbb{Z}}|k|^{\frac23}\sum_{\frac{k}{2}<l<\frac{3}{2}k}|l|^{-1}\big\|{\rm e}^{aA^{-\frac13}|l|^{\frac23}R^{-2}t}\frac{|l|n_{l}}{r} \big\|_{L^{2}L^{2}}\|{\rm e}^{aA^{-\frac13}|k-l|^{\frac23}R^{-2}t}n_{k-l}\|_{L^{\infty}L^{2}}
				\\\leq&\frac{C(R)}{A^{\frac13}}\sum_{k\neq 0, k\in\mathbb{Z}}\sum_{\frac{k}{2}<l<\frac{3}{2}k}\|n_{l}\|_{X_{a}^{l}}\|n_{k-l}\|_{X_{a}^{k-l}}
				\leq\frac{C(R)\|n\|_{Y_a}^2}{A^{\frac13}}
				\leq\frac{C(R)\mathcal{Q}_{1}^{2}}{A^{\frac13}}.
			\end{aligned}
		\end{equation*}
		
		\noindent{\underline{Case 4: $k<0$, $\frac{3k}{2}<l<\frac{k}{2}$ but $l\neq k$.}} This case is the same as Case 3.
		
		Based on the four cases above, $T_{14}$ in (\ref{T14}) can be estimated as
		\begin{equation*}
			T_{14}\leq\frac{C(R)\mathcal{Q}_{1}^{2}}{A^{\frac13}}.
		\end{equation*}
		Collecting the estimates of $T_{11}-T_{14}$, (\ref{T2 0}) yields that
		\begin{equation}\label{T2}
			\begin{aligned}
				T_{1}\leq \frac{C(R)\mathcal{Q}_{1}(\mathcal{Q}_{1}+\mathcal{Q}_{2}+D_{1}+D_{2})}{A^{\frac13}}.
			\end{aligned}
		\end{equation}
		
		According to (ii) of Lemma \ref{lem:f1 f2 g1 g2}, (\ref{nk Xa}) and (\ref{k l k-l}), we arrive at
		\begin{align}
			T_{2}&\leq\frac{1}{A^{\frac12}}\sum_{k\neq 0, k\in\mathbb{Z}}\sum_{l\in\mathbb{Z}\backslash\{0,k\}}|l|\|{\rm e}^{aA^{-\frac13}|l|^{\frac23}R^{-2}t}\varphi_{l}\|_{L^{2}L^{\infty}}\|{\rm e}^{aA^{-\frac13}|k-l|^{\frac23}R^{-2}t}n_{k-l}\|_{L^{\infty}L^{2}}
			\nonumber
			\\&\quad+\frac{C(R)}{A^{\frac12}}\|\widehat{n}_{0}\|_{L^{\infty}L^{2}}\sum_{k\neq 0, k\in\mathbb{Z}}|k|\|{\rm e}^{aA^{-\frac13}|k|^{\frac23}R^{-2}t}\varphi_{k}\|_{L^{2}L^{\infty}}\nonumber\\&\quad+\frac{1}{A^{\frac12}}\sum_{k\neq 0, k\in\mathbb{Z}}\sum_{l\in\mathbb{Z}\backslash\{0,k\}}\|{\rm e}^{aA^{-\frac13}|l|^{\frac23}R^{-2}t}n_{l}\|_{L^{2}L^{\infty}}\|{\rm e}^{aA^{-\frac13}|k-l|^{\frac23}R^{-2}t}\partial_{r}(r^{-\frac12}c_{k-l})\|_{L^{\infty}L^{2}}\nonumber\\&\quad+\frac{C(R)}{A^{\frac12}}\|\widehat{n}_{0}\|_{L^{\infty}L^{\infty}}\sum_{k\neq 0, k\in\mathbb{Z}}\|{\rm e}^{aA^{-\frac13}|k|^{\frac23}R^{-2}t}\partial_{r}(r^{-\frac12}c_{k})\|_{L^{2}L^{2}}\nonumber\\&\quad+\frac{1}{A^{\frac12}}\|\partial_{r}(r^{-\frac12}c_{0})\|_{L^{\infty}L^{\infty}}\sum_{k\neq 0, k\in\mathbb{Z}}\|{\rm e}^{aA^{-\frac13}|k|^{\frac23}R^{-2}t}n_{k}\|_{L^{2}L^{2}}=:T_{21}+\cdots+T_{25}.\label{T3 0}
		\end{align}
		For $T_{21}$, using (\ref{assumption}), Lemma \ref{lem:varphi k>0} and Young inequality for discrete convolution, we get
		\begin{equation*}
			\begin{aligned}
				T_{21}&\leq \frac{C(R)}{A^{\frac12}}\sum_{k\neq 0, k\in\mathbb{Z}}\sum_{l\in\mathbb{Z}\backslash\{0,k\}}\|{\rm e}^{aA^{-\frac13}|l|^{\frac23}R^{-2}t}w_{l}\|_{L^{2}L^{2}}\|{\rm e}^{aA^{-\frac13}|k-l|^{\frac23}R^{-2}t}n_{k-l}\|_{L^{\infty}L^{2}}\\&\leq\frac{C(R)}{A^{\frac13}}\sum_{k\neq 0, k\in\mathbb{Z}}\sum_{l\in\mathbb{Z}\backslash\{0,k\}}\|w_{l}\|_{X_{a}^{l}}\|n_{k-l}\|_{X_{a}^{k-l}}\\&\leq\frac{C(R)}{A^{\frac13}}\Big(\sum_{k\neq 0, k\in\mathbb{Z}}\|w_{k}\|_{X_{a}^{k}} \Big)\Big(\sum_{k\neq 0, k\in\mathbb{Z}}\|n_{k}\|_{X_{a}^{k}} \Big)\leq\frac{C(R)\mathcal{Q}_{1}^{2}}{A^{\frac13}}.
			\end{aligned}
		\end{equation*}
		Similarly, by (\ref{assumption}), Lemma \ref{lem:varphi k>0} and Lemma \ref{lem:n0 w0}, there holds
		\begin{equation*}
			\begin{aligned}
				T_{22}&\leq\frac{C(R)D_{1}}{A^{\frac12}}\sum_{k\neq 0, k\in\mathbb{Z}}\|{\rm e}^{aA^{-\frac13}|k|^{\frac23}R^{-2}t}w_{k}\|_{L^{2}L^{2}}\\&\leq\frac{C(R)D_{1}}{A^{\frac13}}\sum_{k\neq 0, k\in\mathbb{Z}}\|w_{k}\|_{X_{a}^{k}}\leq\frac{C(R)D_{1}\mathcal{Q}_{1}}{A^{\frac13}}.
			\end{aligned}
		\end{equation*}
		Using Lemma \ref{lem:ck}, for $|k|\geq 1$, the direct calculation indicates that
		\begin{equation*}
			\begin{aligned}
				\|\partial_{r}(r^{-\frac12}c_{k})\|_{L^{2}}\leq C\left(\|c_{k}\|_{L^{2}}+\|\partial_{r}c_{k}\|_{L^{2}} \right)  \leq C\|n_{k}\|_{L^{2}}.
			\end{aligned}
		\end{equation*}
		Combining this with (\ref{assumption}),  Gagliardo-Nirenberg inequality and Young inequality for discrete convolution, one deduces
		\begin{equation*}
			\begin{aligned}
				T_{23}&\leq\frac{C}{A^{\frac12}}\bigg(\sum_{k\neq 0, k\in\mathbb{Z}}\sum_{l\in\mathbb{Z}\backslash\{0,k\}}\|{\rm e}^{aA^{-\frac13}|l|^{\frac23}R^{-2}t}n_{l}\|_{L^{2}L^{2}}^{\frac12}\|{\rm e}^{aA^{-\frac13}|l|^{\frac23}R^{-2}t}\partial_{r}n_{l}\|_{L^{2}L^{2}}^{\frac12}\\&\quad \times\|{\rm e}^{aA^{-\frac13}|k-l|^{\frac23}R^{-2}t}n_{k-l}\|_{L^{\infty}L^{2}}	\bigg)\\&\leq\frac{C(R)}{A^{\frac16}}\sum_{k\neq 0, k\in\mathbb{Z}}\sum_{l\in\mathbb{Z}\backslash\{0,k\}}\|n_{l}\|_{X_{a}^{l}}\|n_{k-l}\|_{X_{a}^{k-l}}\leq\frac{C(R)\mathcal{Q}_{1}^{2}}{A^{\frac16}}
			\end{aligned}
		\end{equation*}
		and
		\begin{equation*}
			\begin{aligned}
				T_{24}\leq\frac{C(R)}{A^{\frac12}}\|n\|_{L^{\infty}L^{\infty}}\sum_{k\neq 0, k\in\mathbb{Z}}\|{\rm e}^{aA^{-\frac13}|k|^{\frac23}R^{-2}t}n_{k}\|_{L^{2}L^{2}}\leq\frac{C(R)\mathcal{Q}_{1}\mathcal{Q}_{2}}{A^{\frac13}}.
			\end{aligned}
		\end{equation*}
		For $T_{25}$,  noting that
		\begin{equation*}
			\begin{aligned}
				\|\partial_{r}(r^{-\frac12}c_{0})\|_{L^{\infty}L^{\infty}}\leq&\|(1,\partial_{r})c_{0}\|_{L^{\infty}L^{\infty}}\leq C(R)\|(1,\partial_{r})\widehat{c}_{0}\|_{L^{\infty}L^{\infty}}\leq C(R)\|\widehat{n}_{0}\|_{L^{\infty}L^{2}}\\\leq& C(R)\|\widehat{n}_{0}\|_{L^{\infty}L^{1}}^{\frac12}\|\widehat{n}_{0}\|_{L^{\infty}L^{\infty}}^{\frac12}\leq C(R)(M+\mathcal{Q}_{2}),
			\end{aligned}
		\end{equation*}		
		by Lemma \ref{lem:c0}, (\ref{define nk ck wk}) and (\ref{assumption}), we arrive at
		\begin{equation*}
			\begin{aligned}
				T_{25}\leq\frac{C(R)(m+\mathcal{Q}_{2})}{A^{\frac12}}\sum_{k\neq 0, k\in\mathbb{Z}}A^{\frac16}R\|n_{k}\|_{X_{a}^{k}}\leq\frac{C(R)\mathcal{Q}_{1}(M+\mathcal{Q}_{2})}{A^{\frac13}}.
			\end{aligned}
		\end{equation*}
		Collecting the estimates of $T_{21}-T_{25}$, (\ref{T3 0}) shows that
		\begin{equation}\label{T3}
			\begin{aligned}
				T_{2}\leq\frac{C(R)\mathcal{Q}_{1}(\mathcal{Q}_{1}+\mathcal{Q}_{2}+M+D_{1})}{A^{\frac16}}.
			\end{aligned}
		\end{equation}
		
		Combining (\ref{T2}) and (\ref{T3}), we get from (\ref{nk Xa}) that
		\begin{equation}\label{nk end}
			\begin{aligned}
				\|n\|_{Y_{a}}\leq&C\sum_{k\neq 0, k\in\mathbb{Z}}\|n_{k}(0)\|_{L^{2}}
				+\frac{C(R) \mathcal{Q}_{1}(\mathcal{Q}_{1}+\mathcal{Q}_{2}+M+D_{1}+D_{2})}{A^{\frac16}}.
			\end{aligned}
		\end{equation}
		
		\noindent{\bf Step II. Estimate $\|w\|_{Y_{a}}$.} 
		When $A\geq (C\log R)^3$,
		by applying Proposition \ref{prop:space time} to $(\ref{eq:nk wk})_{2}$, we get
		\begin{equation*}
			\|w_{k}\|_{X_{a}^{k}}\leq C\left(\|w_{k}(0)\|_{L^{2}}+A^{\frac16}|k|^{-\frac13}\|{\rm e}^{aA^{-\frac13}|k|^{\frac23}R^{-2}t}kg_{1}\|_{L^{2}L^{2}}+A^{\frac12}\|{\rm e}^{aA^{-\frac13}|k|^{\frac23}R^{-2}t}g_{2}\|_{L^{2}L^{2}} \right).
		\end{equation*}
		After the summation of $k\neq 0, k\in\mathbb{Z}$, the above inequality follows that
		\begin{equation}\label{wk Xa}
			\begin{aligned}
				\|w\|_{Y_{a}}&\leq C\Big( \sum_{k\neq 0, k\in\mathbb{Z}}\|w_{k}(0)\|_{L^{2}}+\sum_{k\neq 0, k\in\mathbb{Z}}A^{\frac16}|k|^{-\frac13}\|{\rm e}^{aA^{-\frac13}|k|^{\frac23}R^{-2}t}kg_{1}\|_{L^{2}L^{2}}\\&+A^{\frac12}\sum_{k\neq 0, k\in\mathbb{Z}}\|{\rm e}^{aA^{-\frac13}|k|^{\frac23}R^{-2}t}g_{2}\|_{L^{2}L^{2}}\Big)=:C\Big(\sum_{k\neq 0, k\in\mathbb{Z}}\|w_{k}(0)\|_{L^{2}}+S_{1}+S_{2} \Big).
			\end{aligned}	
		\end{equation}		
		Using (iii) of Lemma \ref{lem:f1 f2 g1 g2}, (\ref{k l k-l}) and (\ref{wk Xa}), we obtain
		\begin{equation}\label{S2 0}
			\begin{aligned}
				S_{1}\leq&\frac{C(R)}{A^{\frac56}}\sum_{k\neq 0, k\in\mathbb{Z}}|k|^{\frac23}\sum_{l\in\mathbb{Z}\backslash\{0,k\}}|l|^{-\frac12}\big\|\|{\rm e}^{aA^{-\frac13}|l|^{\frac23}R^{-2}t}w_{l}\|_{L^{2}}\|{\rm e}^{aA^{-\frac13}|k-l|^{\frac23}R^{-2}t}w_{k-l}\|_{L^{2}}\big\|_{L^{2}}\\&+\frac{C(R)}{A^{\frac56}}\|\widehat{w}_{0}\|_{L^{\infty}L^{2}}\sum_{k\neq 0, k\in\mathbb{Z}}|k|^{\frac23}\|{\rm e}^{aA^{-\frac13}|k|^{\frac23}R^{-2}t}w_{k}\|_{L^{2}L^{2}}\\&+\frac{1}{A^{\frac56}}\sum_{k\neq 0, k\in\mathbb{Z}}|k|^{\frac23}\|{\rm e}^{aA^{-\frac13}|k|^{\frac23}R^{-2}t}n_{k}\|_{L^{2}L^{2}}=:S_{11}+S_{12}+S_{13}.
			\end{aligned}
		\end{equation}
		The estimate of	$S_{11}$ is similar to $T_{11}$. As the estimate of  (\ref{end T11}), by categorically discussing the values of $k$ and $l$, we ultimately obtain
		\begin{equation*}
			S_{11}\leq\frac{C(R)\mathcal{Q}_{1}^{2}}{A^{\frac13}}.
		\end{equation*}
		Due to (\ref{assumption}) and Lemma \ref{lem:n0 w0}, there holds
		\begin{equation*}
			\begin{aligned}
				S_{12}\leq&\frac{C(R)}{A^{\frac56}}\|\widehat{w}_{0}\|_{L^{\infty}L^{2}}\sum_{k\neq 0, k\in\mathbb{Z}}\Big\|{\rm e}^{aA^{-\frac13}|k|^{\frac23}R^{-2}t}\frac{|k|w_{k}}{r}\Big\|_{L^{2}L^{2}}\\\leq&\frac{C(R)}{A^{\frac56}}D_{2}(A^{\frac12}\|w\|_{Y_{a}})\leq\frac{C(R)D_{2}\mathcal{Q}_{1}}{A^{\frac13}}.
			\end{aligned}
		\end{equation*}
		Using (\ref{assumption}), we arrive at
		\begin{equation*}
			\begin{aligned}
				S_{13}\leq\frac{R}{A^{\frac56}}\sum_{k\neq 0, k\in\mathbb{Z}}\Big\|{\rm e}^{aA^{-\frac13}|k|^{\frac23}R^{-2}t}\frac{kn_{k}}{r} \Big\|_{L^{2}L^{2}}\leq\frac{R}{A^{\frac13}}\sum_{k\neq 0, k\in\mathbb{Z}}\|n_{k}\|_{X_{a}^{k}}\leq\frac{C(R)\mathcal{Q}_{1}}{A^{\frac13}}.
			\end{aligned}
		\end{equation*}
		Substituting the estimates of $S_{11}-S_{13}$ into (\ref{S2 0}), one deduces
		\begin{equation}\label{S2}
			\begin{aligned}
				S_{1}\leq\frac{\mathcal{C}(R)\mathcal{Q}_{1}(\mathcal{Q}_{1}+D_{2}+1)}{A^{\frac13}}.
			\end{aligned}
		\end{equation}
		According to (iv) of Lemma \ref{lem:f1 f2 g1 g2}, (\ref{k l k-l}) and (\ref{wk Xa}), $S_{2}$ can be controlled by
		\begin{equation*}
			\begin{aligned}
				S_{2}&\leq\frac{1}{A^{\frac12}}\sum_{k\neq 0, k\in\mathbb{Z}}\sum_{l\in\mathbb{Z}\backslash\{0,k\}}|l|\|{\rm e}^{aA^{-\frac13}|l|^{\frac23}R^{-2}t}\varphi_{l}\|_{L^{2}L^{\infty}}\|{\rm e}^{aA^{-\frac13}|k-l|^{\frac23}R^{-2}t}w_{k-l}\|_{L^{\infty}L^{2}}\\&\quad+\frac{C(R)}{A^{\frac12}}\|\widehat{w}_{0}\|_{L^{\infty}L^{2}}\sum_{k\neq 0, k\in\mathbb{Z}}|k|\|{\rm e}^{aA^{-\frac13}|k|^{\frac23}R^{-2}t}\varphi_{k}\|_{L^{2}L^{\infty}}.
			\end{aligned}
		\end{equation*}			
		Then using (\ref{assumption}), Lemma \ref{lem:varphi k>0}, Lemma \ref{lem:n0 w0} and Young inequality for discrete convolution, we get
		\begin{align}
			S_{2}&\leq\frac{C(R)}{A^{\frac12}}\sum_{k\neq 0, k\in\mathbb{Z}}\sum_{l\in\mathbb{Z}\backslash\{0,k\}}\|{\rm e}^{aA^{-\frac13}|l|^{\frac23}R^{-2}t}w_{l}\|_{L^{2}L^{2}}\|{\rm e}^{aA^{-\frac13}|k-l|^{\frac23}R^{-2}t}w_{k-l}\|_{L^{\infty}L^{2}}
			\nonumber
			\\&\quad+\frac{C(R)D_{2}}{A^{\frac12}}\sum_{k\neq 0, k\in\mathbb{Z}}\|{\rm e}^{aA^{-\frac13}|k|^{\frac23}R^{-2}t}w_{k}\|_{L^{2}L^{2}}
			\nonumber\\
			&\leq\frac{C(R)}{A^{\frac13}}\Big(\sum_{k\neq 0, k\in\mathbb{Z}}\sum_{l\in\mathbb{Z}\backslash\{0,k\}}\|w_{l}\|_{X_{a}^{l}}\|w_{k-l}\|_{X_{a}^{k-l}}+D_{2}\sum_{k\neq 0, k\in\mathbb{Z}}\|w_{k}\|_{X_{a}^{k}} \Big)
			\nonumber\\
			&\leq\frac{C(R)}{A^{\frac13}}\left(\|w\|_{Y_{a}}^{2}+D_{2}\|w\|_{Y_{a}}\right)\leq\frac{C(R)\mathcal{Q}_{1}(\mathcal{Q}_{1}+D_{2})}{A^{\frac13}}.\label{S3}
		\end{align}
		Collecting the estimates of (\ref{S2}) and (\ref{S3}), (\ref{wk Xa}) yields that
		\begin{equation}\label{wk end}
			\begin{aligned}
				\|w\|_{Y_{a}}\leq C\sum_{k\neq 0, k\in\mathbb{Z}}\|w_{k}(0)\|_{L^{2}}+\frac{C(R)\mathcal{Q}_{1}(\mathcal{Q}_{1}+D_{2}+1)}{A^{\frac13}}.
			\end{aligned}
		\end{equation}
		
		Therefore, combining (\ref{nk end}) and (\ref{wk end}), we conclude that
		\begin{equation}\label{E(t) estimate}
			\begin{aligned}
				E(t)\leq CE_{\rm in}+\frac{C(R)\mathcal{Q}_{1}(\mathcal{Q}_{1}+\mathcal{Q}_{2}+M+D_{1}+D_{2}+1)}{A^{\frac16}},
			\end{aligned}
		\end{equation}
		where 
		\begin{equation*}
			E_{\rm in}=\sum_{k\neq 0, k\in\mathbb{Z}}\|n_{k}(0)\|_{L^{2}}+\sum_{k\neq 0, k\in\mathbb{Z}}\|w_{k}(0)\|_{L^{2}}.
		\end{equation*}
		Recalling that $n_{k}=r^{\frac12}{\rm e}^{ikt}\widehat{n}_{k}, w_{k}=r^{\frac12}{\rm e}^{ikt}\widehat{w}_{k}$, and using H$\ddot{\rm o}$lder's inequality, we get
		\begin{equation}\label{Ein}
			\begin{aligned}
				E_{\rm in}&\leq R^{\frac12}\Big[\Big(\sum_{k\neq 0, k\in\mathbb{Z}}\|k\widehat{n}_{k}(0)\|_{L^{2}}^{2} \Big)^{\frac12}+\Big(\sum_{k\neq 0, k\in\mathbb{Z}}\|k\widehat{w}_{k}(0)\|_{L^{2}}^{2} \Big)^{\frac12} \Big]\Big(\sum_{k\neq 0, k\in\mathbb{Z}}\frac{1}{k^{2}} \Big)^{\frac12}\\&\leq C(R)\left(\|\partial_{\theta}n_{\rm in}\|_{L^{2}}+\|\partial_{\theta}w_{\rm in}\|_{L^{2}} \right).
			\end{aligned}
		\end{equation}
		Let us denote 
		\begin{equation*}
			\mathcal{C}_{2}:=\max\{(C\log R)^3, \mathcal{Q}_{1}^{6}(\mathcal{Q}_{1}+\mathcal{Q}_{2}+M+D_{1}+D_{2}+1)^{6}\}.
		\end{equation*}
		Then if $A\geq \mathcal{C}_{2}$, (\ref{E(t) estimate}) and (\ref{Ein}) imply that
		\begin{equation*}
			E(t)\leq C(R)\left(\|\partial_{\theta}n_{\rm in}\|_{L^{2}}+\|\partial_{\theta}w_{\rm in}\|_{L^{2}}+1 \right)=:\mathcal{Q}_{1}.
		\end{equation*}
		
		The proof is complete.
	\end{proof}

	\section{The $L^{\infty}$ estimate of the density and proof of Proposition \ref{prop:n infty}}\label{estimate infty}
	\begin{proof}[Proof of Proposition \ref{prop:n infty}]	Multiplying $(\ref{ini1})_{1}$ by $2pn^{2p-1}$ with $p=2^{j} ~(j\geq 1)$, and integrating by parts the resulting equation over $[1,R]\times\mathbb{S}^{1}$, one obtains
		\begin{align}
			&\frac{d}{dt}\|n^{p}\|_{L^{2}}^{2}+\frac{2(2p-1)}{Ap}\Big\|\Big(\partial_{r}, \frac{1}{r}\partial_{\theta}\Big)n^{p}\Big\|_{L^{2}}^{2}\nonumber\\=&\frac{1}{A}\Big\|\frac{n^{p}}{r} \Big\|_{L^{2}}^{2}+\frac{1}{A}\int_{0}^{2\pi}\int_{1}^{R}\frac{1}{r^{2}}\partial_{\theta}\varphi n^{2p}dr d\theta+\frac{4p}{A}\int_{0}^{2\pi}\int_{1}^{R}\frac{1}{r}n^{p}c\partial_{r}n^{p}drd\theta\nonumber\\&-\frac{2p}{A}\int_{0}^{2\pi}\int_{1}^{R}\frac{1}{r^{2}}n^{2p}cdrd\theta+\frac{2(2p-1)}{A}\int_{0}^{2\pi}\int_{1}^{R}n^{p}\Big(\partial_{r}, \frac{1}{r}\partial_{\theta} \Big)c\cdot\Big(\partial_{r}, \frac{1}{r}\partial_{\theta} \Big)n^{p}drd\theta\nonumber\\\leq&\frac{1}{A}\left(1+\|\partial_{\theta}\varphi\|_{L^{\infty}L^{\infty}}+2p\|c\|_{L^{\infty}L^{\infty}} \right)\|n^{p}\|_{L^{2}}^{2}+\frac{4p}{A}\|n^{p}c\|_{L^{2}}\|\partial_{r}n^{p}\|_{L^{2}}\nonumber\\&+\frac{2(2p-1)}{A}\Big\|n^{p}\Big(\partial_{r},\frac{1}{r}\partial_{\theta}\Big)c\Big\|_{L^{2}}\Big\|\Big(\partial_{r},\frac{1}{r}\partial_{\theta}\Big)n^{p}  \Big\|\nonumber\\\leq&\frac{1}{A}\left(1+\|\partial_{\theta}\varphi\|_{L^{\infty}L^{\infty}}+2p\|c\|_{L^{\infty}L^{\infty}} \right)\|n^{p}\|_{L^{2}}^{2}+\frac{Cp^{2}}{A}\Big\|n^{p}\Big(1,\partial_{r},\frac{1}{r}\partial_{\theta} \Big)c\Big\|_{L^{2}}^{2}\nonumber\\&+\frac{2p-1}{Ap}\Big\|\Big(\partial_{r},\frac{1}{r}\partial_{\theta} \Big)n^{p} \Big\|_{L^{2}}^{2}.\label{np 1}
		\end{align}
		Due to (\ref{assumption}), Lemma \ref{lem:varphi k>0} and $\varphi_{k}=r^{\frac12}{\rm e}^{ikt}\widehat{\varphi}_{k}$, there holds
		\begin{equation*}
			\begin{aligned}
				\|\partial_{\theta}\varphi\|_{L^{\infty}L^{\infty}}\leq&\sum_{k\neq 0, k\in\mathbb{Z}}|k|\|{\rm e}^{aA^{-\frac13}|k|^{\frac23}R^{-2}t}\widehat{\varphi}_{k}\|_{L^{\infty}L^{\infty}}\\\leq& R^{\frac12}\sum_{k\neq 0, k\in\mathbb{Z}}|k|\|{\rm e}^{aA^{-\frac13}|k|^{\frac23}R^{-2}t}r^{-\frac12}\varphi_{k}\|_{L^{\infty}L^{\infty}}\\\leq &C(R)\sum_{k\neq 0, k\in\mathbb{Z}}|k|^{-\frac12}\|{\rm e}^{aA^{-\frac13}|k|^{\frac23}R^{-2}t}rw_{k}\|_{L^{\infty}L^{2}}\\\leq&C(R)\sum_{k\neq 0, k\in\mathbb{Z}}\|{\rm e}^{aA^{-\frac13}|k|^{\frac23}R^{-2}t}w_{k}\|_{L^{\infty}L^{2}}\leq C(R)\mathcal{Q}_{1}.
			\end{aligned}
		\end{equation*}
		Using (\ref{assumption}), Lemma \ref{lem:ck}, Lemma \ref{lem:c0} and Lemma \ref{lem:n0 w0}, we get
		\begin{equation*}
			\begin{aligned}
				\|c\|_{L^{\infty}L^{\infty}}\leq&\|\widehat{c}_{0}\|_{L^{\infty}L^{\infty}}+\sum_{k\neq 0, k\in\mathbb{Z}}\|{\rm e}^{aA^{-\frac13}|k|^{\frac23}R^{-2}t}c_{k}\|_{L^{\infty}L^{\infty}}\\\leq&C(R)\Big(\|r^{\frac12}\widehat{n}_{0}\|_{L^{\infty}L^{2}}+\sum_{k\neq 0, k\in\mathbb{Z}}\|{\rm e}^{aA^{-\frac13}|k|^{\frac23}R^{-2}t}n_{k}\|_{L^{\infty}L^{2}} \Big)\leq C(R)\left(D_{1}+\mathcal{Q}_{1} \right).
			\end{aligned}
		\end{equation*}
		Moreover, it follows from H$\ddot{\rm o}$lder's and Nash inequalities that
		\begin{equation*}
			\begin{aligned}
				\Big\|n^{p}\Big(1,\partial_{r}, \frac{1}{r}\partial_{\theta} \Big)c  \Big\|_{L^{2}}^{2}\leq&\|n^{p}\|_{L^{4}}^{2}\Big\|\Big(1,\partial_{r},\frac{1}{r}\partial_{\theta} \Big)c \Big\|_{L^{4}}^{2}\\\leq& C\|n^{p}\|_{L^{2}}\|\nabla n^{p}\|_{L^{2}}\Big\|\Big(1,\partial_{r},\frac{1}{r}\partial_{\theta} \Big)c \Big\|_{L^{4}}^{2}.
			\end{aligned}
		\end{equation*}
		Substituting the above estimates into (\ref{np 1}), we arrive at
		\begin{align*}
			&\frac{d}{dt}\|n^{p}\|_{L^{2}}^{2}+\frac{2(2p-1)}{Ap}\Big\|\Big(\partial_{r}, \frac{1}{r}\partial_{\theta} \Big)n^{p}\Big\|_{L^{2}}^{2}\\\leq&\frac{C(R)}{A}\Big(1+pD_{1}+p\mathcal{Q}_{1} \Big)\|n^{p}\|_{L^{2}}^{2}+\frac{Cp^{2}}{A}\|n^{p}\|_{L^{2}}\|\nabla n^{p}\|_{L^{2}}\Big\|\Big(1,\partial_{r},\frac{1}{r}\partial_{\theta} \Big)c \Big\|_{L^4}^{2}\\&+\frac{2p-1}{Ap}\Big\|\Big(\partial_{r},\frac{1}{r}\partial_{\theta} \Big)n^{p} \Big\|_{L^{2}}^{2}\\\leq&\frac{C(R)}{A}\Big(1+pD_{1}+p\mathcal{Q}_{1} \Big)\|n^{p}\|_{L^{2}}^{2}+\frac{Cp^{4}}{A}\|n^{p}\|_{L^{2}}^{2}\Big\|\Big(1,\partial_{r},\frac{1}{r}\partial_{\theta} \Big)c \Big\|_{L^{4}}^{4}\\&+\frac{5(2p-1)}{4Ap}\Big\|\Big(\partial_{r},\frac{1}{r}\partial_{\theta} \Big)n^{p} \Big\|_{L^{2}}^{2}.
		\end{align*}
		This implies that
		\begin{equation}\label{np 2}
			\begin{aligned}
				&\frac{d}{dt}\|n^{p}\|_{L^{2}}^{2}+\frac{1}{2A}\Big\|\Big(\partial_{r},\frac{1}{r}\partial_{\theta} \Big)n^{p} \Big\|_{L^{2}}^{2}\\\leq&\frac{C(R)p^{4}}{A}\|n^{p}\|_{L^{2}}^{2}\Big[\Big\|\Big(1,\partial_{r},\frac{1}{r}\partial_{\theta} \Big)c \Big\|_{L^{4}}^{4}+\mathcal{Q}_{1}+D_{1}+1 \Big].
			\end{aligned}
		\end{equation}
		Using the Nash inequality again
		\begin{equation*}
			\|n^{p}\|_{L^{2}}\leq C(R)\|n^{p}\|_{L^{1}}^{\frac12}\Big\|\Big(\partial_{r},\frac{1}{r}\partial_{\theta} \Big)n^{p} \Big\|_{L^{2}}^{\frac12},
		\end{equation*}
		we infer from (\ref{np 2}) that
		\begin{equation}\label{np 3}
			\begin{aligned}
				\frac{d}{dt}\|n^{p}\|_{L^{2}}^{2}\leq&-\frac{\|n^{p}\|_{L^{2}}^{4}}{2AC(R)\|n^{p}\|_{L^{1}}^{2}}+\frac{C(R)p^{4}}{A}\|n^{p}\|_{L^{2}}^{2}\Big[\Big\|\Big(1,\partial_{r},\frac{1}{r}\partial_{\theta} \Big)c \Big\|_{L^{4}}^{4}+\mathcal{Q}_{1}+D_{1}+1 \Big]
			\end{aligned}
		\end{equation}
		Using (\ref{assumption}), Lemma \ref{lem:ck}, Lemma \ref{lem:c0}, Lemma \ref{lem:n0 w0} and  Gagliardo-Nirenberg inequality, one obtains
		\begin{equation*}
			\begin{aligned}
				&\Big\|\Big(1,\partial_{r},\frac{1}{r}\partial_{\theta} \Big)c \Big\|_{L^{\infty}L^{4}}\leq\|(1,\partial_{r})\widehat{c}_{0}\|_{L^{\infty}L^{4}}+\sum_{k\neq 0, k\in\mathbb{Z}}\Big\|{\rm e}^{aA^{-\frac13}|k|^{\frac23}R^{-2}t}\Big(1,\partial_{r},\frac{|k|}{r} \Big)c_{k} \Big\|_{L^{\infty}L^{4}}\\\leq&C\|(1,\partial_{r})\widehat{c}_{0}\|_{L^{\infty}L^{2}}^{\frac34}\|(1,\partial_{r})\partial_{r}\widehat{c}_{0}\|_{L^{\infty}L^{2}}^{\frac14}\\+&C\Big(\sum_{k\neq 0, k\in\mathbb{Z}}\Big\|{\rm e}^{aA^{-\frac13}|k|^{\frac23}R^{-2}t}c_{k}\Big\|_{L^{2}} \Big)^{\frac12}\Big(\sum_{k\neq 0, k\in\mathbb{Z}}\|{\rm e}^{aA^{-\frac13}|k|^{\frac23}R^{-2}t}\Big(\partial_{r},\frac{|k|}{r} \Big)c_{k}\Big\|_{L^{2}} \Big)^{\frac12}\\+&C\Big(\sum_{k\neq 0, k\in\mathbb{Z}}\Big\|{\rm e}^{aA^{-\frac13}|k|^{\frac23}R^{-2}t}\Big(\partial_{r},\frac{|k|}{r} \Big)c_{k}\Big\|_{L^{2}} \Big)^{\frac12}\Big(\sum_{k\neq 0, k\in\mathbb{Z}}\|{\rm e}^{aA^{-\frac13}|k|^{\frac23}R^{-2}t}\Big(\partial_{r}^{2}, \frac{1}{r}\partial_{r},\frac{|k|^{2}}{r^{2}} \Big)c_{k}\Big\|_{L^{2}} \Big)^{\frac12}\\\leq&C(R)\big(\|\widehat{n}_{0}\|_{L^{\infty}L^{2}}+\sum_{k\neq 0, k\in\mathbb{Z}}\|{\rm e}^{aA^{-\frac13}|k|^{\frac23}R^{-2}t}n_{k}\|_{L^{\infty}L^{2}}\big)\leq C(R)\left(D_{1}+\mathcal{Q}_{1} \right).
			\end{aligned}
		\end{equation*}
		This along with (\ref{np 3}) gives that
		\begin{equation}\label{np 4}
			\begin{aligned}
				\frac{d}{dt}\|n^{p}\|_{L^{2}}^{2}\leq-\frac{\|n^{p}\|_{L^{2}}^{4}}{2AC(R)\|n^{p}\|_{L^{1}}^{2}}+\frac{C(R)p^{4}}{A}\|n^{p}\|_{L^{2}}^{2}\Big(D_{1}^{4}+\mathcal{Q}_{1}^{4}+1 \Big).
			\end{aligned}
		\end{equation}
		
		Claim that
		\begin{equation}\label{claim np}
			\begin{aligned}
				\sup_{t\geq 0}\|n^{p}\|_{L^{2}}^{2}\leq\max\Bigl\{4[C(R)]^{2}p^{4}\Big(D_{1}^{4}+\mathcal{Q}_{1}^{4}+1 \Big)\sup_{t\geq 0}\|n^{p}\|_{L^{1}}^{2}, 2\|n_{\rm in}^{p}\|_{L^{2}}^{2} \Bigr\}.
			\end{aligned}
		\end{equation}
		Otherwise, there  exists $t=t_{1}>0$ such that
		\begin{equation}\label{np t1}
			\|n^{p}(t_{1})\|_{L^{2}}^{2}=\max\Bigl\{4[C(R)]^{2}p^{4}\Big(D_{1}^{4}+\mathcal{Q}_{1}^{4}+1 \Big)\|n^{p}(t_{1})\|_{L^{1}}^{2}, 2\|n_{\rm in}^{p}\|_{L^{2}}^{2} \Bigr\}
		\end{equation}
		and
		\begin{equation}\label{np great 0}
			\frac{d}{dt}\Big(\|n^{p}(t)\|_{L^{2}}^{2} \Big)|_{t=t_{1}}\geq 0.
		\end{equation}
		Due to (\ref{np 4}) and (\ref{np t1}), there holds
		\begin{equation*}
			\begin{aligned}
				&\frac{d}{dt}\Big(\|n^{p}(t)\|_{L^{2}}^{2} \Big)|_{t=t_{1}}\\\leq&-\|n^{p}(t_{1})\|_{L^{2}}^{2}\left\{\frac{\|n^{p}(t_{1})\|_{L^{2}}^{2}}{2AC(R)\|n^{p}(t_{1})\|_{L^{1}}^{2}}-\frac{C(R)p^{4}}{A}\Big(D_{1}^{4}+\mathcal{Q}_{1}^{4}+1 \Big) \right\}\\=&-\|n^{p}(t_{1})\|_{L^{2}}^{2}\frac{C(R)p^{4}\Big(D_{1}^{4}+\mathcal{Q}_{1}^{4}+1 \Big)}{A}<0,
			\end{aligned}
		\end{equation*}
		which contradicts  with (\ref{np great 0}). Therefore, (\ref{claim np}) holds. 
		
		Next, the Moser-Alikakos iteration is used to determine $\mathcal{Q}_{2}$. Recall $p=2^{j}$ with $j\geq 1$, and  rewrite (\ref{claim np}) into 
		\begin{equation}\label{np 5}
			\begin{aligned}
				&\sup_{t\geq 0}\int_{0}^{2\pi}\int_{1}^{R}|n(t)|^{2^{j+1}}drd\theta\\\leq&\max\left\{Hp^{4}\left(\int_{0}^{2\pi}\int_{1}^{R}|n(t)|^{2^{j}}drd\theta \right)^{2}, 2\int_{0}^{2\pi}\int_{1}^{R}|n_{\rm in}|^{2^{j+1}}drd\theta \right\},
			\end{aligned}
		\end{equation}
		where $H=4[C(R)]^{2}\Big(D_{1}^{4}+\mathcal{Q}_{1}^{4}+1 \Big)$. By Lemma \ref{lem:n0 w0}, we have
		\begin{equation*}
			\|r^{\frac12}n_{0}\|_{L^{2}}\leq D_{1}.
		\end{equation*}
		Then
		\begin{equation*}
			\begin{aligned}
				\sup_{t\geq 0}\|n(t)\|_{L^{2}}\leq& 2\pi\|\widehat{n}_{0}\|_{L^{\infty}L^{2}}+\|\sum_{k\in\mathbb{Z},k\neq0} \widehat{n}_k\|_{L^{\infty}L^{2}}\\\leq& 2\pi\|r^{\frac12}\widehat{n}_{0}\|_{L^{\infty}L^{2}}+\sum_{k\neq 0, k\in\mathbb{Z}}\|n_{k}\|_{L^{\infty}L^{2}}\leq 2\pi D_{1}+\mathcal{Q}_{1}.
			\end{aligned}
		\end{equation*}
		Combining it with interpolation inequality, for $0<\theta<1$ and $j\geq 1$, we arrive at
		\begin{equation*}
			\|n_{\rm in}\|_{L^{2^{j}}}\leq\|n_{\rm in}\|_{L^{2}}^{\theta}\|n_{\rm in}\|_{L^{\infty}}^{1-\theta}\leq\|n_{\rm in}\|_{L^{2}}+\|n_{\rm in}\|_{L^{\infty}}\leq 2\pi D_{1}+\mathcal{Q}_{1}+\|n_{\rm in}\|_{L^{\infty}}.
		\end{equation*}
		This yields that
		\begin{equation*}
			\begin{aligned}
				2\int_{0}^{2\pi}\int_{1}^{R}|n_{\rm in}|^{2^{j+1}}drd\theta\leq 2\left(2\pi D_{1}+\mathcal{Q}_{1}+\|n_{\rm in}\|_{L^{\infty}} \right)^{2^{j+1}}\leq K^{2^{j+1}},
			\end{aligned}
		\end{equation*}
		where $K=2\left(2\pi D_{1}+\mathcal{Q}_{1}+\|n_{\rm in}\|_{L^{\infty}} \right)$. 
		Now, we rewrite (\ref{np 5}) as
		\begin{equation*}
			\sup_{t\geq 0}\int_{0}^{2\pi}\int_{1}^{R}|n(t)|^{2^{j+1}}drd\theta\leq\max\left\{H16^{j}\left(\sup_{t\geq 0}\int_{0}^{2\pi}\int_{1}^{R}|n(t)|^{2^{j}}drd\theta \right)^{2}, K^{2^{j+1}} \right\}.
		\end{equation*}
		For $j=k$, we get
		\begin{equation*}
			\sup_{t\geq 0}\int_{0}^{2\pi}\int_{1}^{R}|n(t)|^{2^{k+1}}drd\theta\leq H^{a_{k}}16^{b_{k}}K^{2^{k+1}},
		\end{equation*}
		where $a_{k}=1+2a_{k-1}$ and $b_{k}=k+2b_{k-1}$.
		
		Generally, one can obtain the following formulas
		\begin{equation*}
			a_{k}=2^{k}-1,\quad {\rm and}\quad b_{k}=2^{k+1}-k-2.
		\end{equation*}
		Thus, we arrive
		\begin{equation*}
			\sup_{t\geq 0}\left(\int_{0}^{2\pi}\int_{1}^{R}|n(t)|^{2^{k+1}}drd\theta \right)^{\frac{1}{2^{k+1}}}\leq H^{\frac{2^{k}-1}{2^{k+1}}}16^{\frac{2^{k+1}-k-2}{2^{k+1}}}K.
		\end{equation*}
		Letting $k\to\infty$, there holds
		\begin{equation}\label{prop2}
			\sup_{t\geq 0}\|n(t)\|_{L^{\infty}}\leq C(R)\Big(D_{1}^{4}+\mathcal{Q}_{1}^{4}+1 \Big)\left(2\pi D_{1}+\mathcal{Q}_{1}+\|n_{\rm in}\|_{L^{\infty}} \right)=:\mathcal{Q}_{2}.
		\end{equation}
		
		The proof is complete.
		
	\end{proof}
	
	\begin{corollary}\label{coro1}
		Under the assumptions of Theorem \ref{thm:main}, 
		when $A\geq A_{1}$, there holds 
		\begin{equation*}
			\begin{aligned}
				&\|u\|_{L^{\infty}L^{\infty}}\leq C(\|n_{\rm in}\|_{H^{1}}, \|u_{\rm in}\|_{H^{2}},R),\\
				&\|n\|_{L^{\infty}L^{\infty}}\leq C(\|n_{\rm in}\|_{H^{1}\cap L^{\infty}}, \|u_{\rm in}\|_{H^{2}},R).
			\end{aligned}
		\end{equation*}
	\end{corollary}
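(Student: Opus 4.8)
The plan is to harvest the bounds already proved in Propositions~\ref{prop:E(t)} and \ref{prop:n infty} together with the zero-mode estimates of Lemma~\ref{lem:n0 w0}, and then merely re-express the auxiliary constants $D_1,D_2,\mathcal{Q}_1,\mathcal{Q}_2$ in terms of the genuine initial-data norms. First I would record the elementary reductions valid on the bounded domain $\mathcal{D}$: since $\mathcal{D}$ has finite measure, $M=\|n_{\rm in}\|_{L^1}\leq C(R)\|n_{\rm in}\|_{L^2}\leq C(R)\|n_{\rm in}\|_{H^1}$ and $\|\widehat{(n_{\rm in})}_0\|_{L^2}\leq C\|n_{\rm in}\|_{L^2}$; moreover $\|\partial_\theta n_{\rm in}\|_{L^2}\leq C\|n_{\rm in}\|_{H^1}$. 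Because the vorticity perturbation is one derivative of the velocity perturbation ($w=\partial_x u_2-\partial_y u_1$), one also has $\|\widehat{(w_{\rm in})}_0\|_{L^2}\leq C\|w_{\rm in}\|_{L^2}\leq C\|u_{\rm in}\|_{H^1}$ and $\|\partial_\theta w_{\rm in}\|_{L^2}\leq C\|w_{\rm in}\|_{H^1}\leq C\|u_{\rm in}\|_{H^2}$. Substituting these into the definitions \eqref{result D1} and \eqref{result D2} of $D_1,D_2$, and into the formula for $\mathcal{Q}_1$ at the end of Section~\ref{estimate E(t)}, shows that $D_1,D_2,\mathcal{Q}_1$ are each bounded by a constant depending only on $R$, $\|n_{\rm in}\|_{H^1}$ and $\|u_{\rm in}\|_{H^2}$.

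For the density estimate I would invoke Proposition~\ref{prop:n infty} directly: the explicit value \eqref{prop2} of $\mathcal{Q}_2$ is a fixed polynomial in $D_1,\mathcal{Q}_1$ and $\|n_{\rm in}\|_{L^\infty}$, so combining it with the reductions above yields, for $A\geq A_1$, the bound $\|n\|_{L^\infty L^\infty}\leq\mathcal{Q}_2\leq C(\|n_{\rm in}\|_{L^\infty\cap H^1},\|u_{\rm in}\|_{H^2},R)$, which is the second inequality of the corollary.

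The velocity estimate is the only step requiring a (routine) new computation. Since $u=(-\partial_y\varphi,\partial_x\varphi)$ with $\Delta\varphi=w$, one has the pointwise identity $|u|=|\nabla\varphi|$, hence $\|u\|_{L^\infty L^\infty}\leq\|\partial_r\varphi\|_{L^\infty L^\infty}+\|r^{-1}\partial_\theta\varphi\|_{L^\infty L^\infty}$. I would split this into Fourier modes in $\theta$. The angular part annihilates the zero mode, while for the radial zero mode Lemma~\ref{lem:varphi k=0} gives $\|\partial_r\widehat{\varphi}_0\|_{L^\infty}\leq C(R)\|r^{\frac32}\widehat{w}_0\|_{L^2}\leq C(R)D_2$ after taking the supremum in time. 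For $k\neq0$, recalling $\varphi_k=r^{\frac12}{\rm e}^{ikt}\widehat{\varphi}_k$, the computation \eqref{varphi l} together with Lemma~\ref{lem:varphi k>0} yields both $\|\partial_r\widehat{\varphi}_k\|_{L^\infty}\leq C(R)|k|^{-\frac12}\|w_k\|_{L^2}$ and $|k|\,\|r^{-1}\widehat{\varphi}_k\|_{L^\infty}\leq C(R)|k|^{-\frac12}\|rw_k\|_{L^2}\leq C(R)|k|^{-\frac12}\|w_k\|_{L^2}$. Taking the $L^\infty$ norm in time and using that the exponential weight in \eqref{X_a^k} is $\geq1$, so that $\|w_k\|_{L^\infty L^2}\leq\|w_k\|_{X_a^k}$, the nonzero-mode contribution is controlled by $\sum_{k\neq0}C(R)|k|^{-\frac12}\|w_k\|_{X_a^k}\leq C(R)\|w\|_{Y_a}\leq C(R)\mathcal{Q}_1$ by \eqref{Xa} and Proposition~\ref{prop:E(t)}. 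Combining the zero- and nonzero-mode contributions gives $\|u\|_{L^\infty L^\infty}\leq C(R)(D_2+\mathcal{Q}_1)\leq C(\|n_{\rm in}\|_{H^1},\|u_{\rm in}\|_{H^2},R)$.

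The only mild subtlety, and the place I would be most careful, is the convergence of the $k$-sum in the velocity bound: this is exactly where the gain $|k|^{-\frac12}$ supplied by the elliptic estimates of Lemma~\ref{lem:varphi k>0} is used, since it lets $\sum_{k\neq0}|k|^{-\frac12}\|w_k\|_{X_a^k}$ be dominated term-by-term (using $|k|^{-\frac12}\leq1$) by $\|w\|_{Y_a}$, with no further summability argument needed. With both displayed inequalities established, the corollary follows.
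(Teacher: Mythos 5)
Your proposal is correct and follows essentially the same route as the paper: decompose $u$ into angular Fourier modes, control the zero mode of the vorticity/stream function via the $D_2$ bound of Lemma \ref{lem:n0 w0} and the nonzero modes via $\|w\|_{Y_a}\leq\mathcal{Q}_1$ from Proposition \ref{prop:E(t)}, quote Proposition \ref{prop:n infty} for the density, and trace $D_1,D_2,\mathcal{Q}_1,\mathcal{Q}_2$ back to $\|n_{\rm in}\|_{L^\infty\cap H^1}$ and $\|u_{\rm in}\|_{H^2}$ using \eqref{Ein}. The only cosmetic difference is that you make the elliptic step $\|\widehat{u}_k\|_{L^\infty}\lesssim\|\widehat{w}_k\|_{L^2}$ explicit through the stream function and Lemmas \ref{lem:varphi k>0}--\ref{lem:varphi k=0}, where the paper compresses it into a one-line appeal to a 1D Gagliardo--Nirenberg-type bound.
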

	
	\begin{proof}
		Rewriting the velocity $u$ into
		\begin{equation*}
			\begin{aligned}
				u(t,r,\theta)=\sum_{k\in\mathbb{Z}} \widehat{u}_k(t,r){\rm e}^{-ik\theta}
				=\widehat{u}_0(t,r)+\sum_{k\in\mathbb{Z},k\neq0} \widehat{u}_k(t,r){\rm e}^{-ik\theta}.
			\end{aligned}
		\end{equation*}
		Then, by 1D Gagliardo-Nirenberg inequality,  we obtain that 
		\begin{equation*}
			\begin{aligned}
				\|u\|_{L^{\infty}L^{\infty}}
				&\leq \|\widehat{u}_0\|_{L^{\infty}L^{\infty}}
				+\sum_{k\in\mathbb{Z},k\neq0} \|\widehat{u}_k\|_{L^{\infty}L^{\infty}}\\
				&\leq \|\widehat{w}_0\|_{L^{\infty}L^{2}}
				+\sum_{k\in\mathbb{Z},k\neq0} \|\widehat{w}_k\|_{L^{\infty}L^{2}}.
			\end{aligned}
		\end{equation*}
		Using \eqref{result D2} and \eqref{Ein}, 
		when $A\geq A_{1}$,
		we infer from the above inequality that 
		\begin{equation*}
			\begin{aligned}
				\|u\|_{L^{\infty}L^{\infty}}&\leq \|\widehat{w}_0\|_{L^{\infty}L^{2}}
				+\sum_{k\in\mathbb{Z},k\neq0} \|\widehat{w}_k\|_{L^{\infty}L^{2}}\\
				&\leq C(\|n_{\rm in}\|_{H^{1}}, \|u_{\rm in}\|_{H^{2}},R).
			\end{aligned}
		\end{equation*}
		By \eqref{Ein} and \eqref{prop2}, we obtain that   
		\begin{equation*}
			\begin{aligned}
				\|n\|_{L^{\infty}L^{\infty}}\leq C(\|n_{\rm in}\|_{H^{1}\cap L^{\infty}}, \|u_{\rm in}\|_{H^{2}},R).
			\end{aligned}
		\end{equation*}
		The proof is complete.
	\end{proof}

	\section*{Acknowledgement}
	W. Wang was supported by National Key R\&D Program of China (No.2023YFA1009200) and NSFC under grant 12471219.

	\section*{Declaration of competing interest}
	The authors declare that they have no known competing financial interests or personal relationships that could have appeared to influence the work reported in this paper.
	\section*{Data availability}
	No data was used in this paper.

\end{document}